\newcommand{\PreserveBackslash}[1]{\let\temp=\\#1\let\\=\temp}
\newcolumntype{C}[1]{>{\PreserveBackslash\centering}p{#1}}
\newcolumntype{R}[1]{>{\PreserveBackslash\raggedleft}p{#1}}
\newcolumntype{L}[1]{>{\PreserveBackslash\raggedright}p{#1}}
\DeclareMathOperator{\rank}{\ensuremath{rank}}
\def\wbar{\accentset{{\cc@style\underline{\mskip8mu}}}}
\numberwithin{equation}{section}
\theoremstyle{plain}
\newtheorem{theorem}{Theorem}[section]
\newtheorem{defn} [theorem] {Definition}
\newtheorem{lemma} [theorem] {Lemma}
\newtheorem{remark}[theorem]{Remark}
\newtheorem{cor}[theorem]{Corollary}
\newtheorem{pro}[theorem]{Proposition}
\begin{document}
\bibliographystyle{unsrt}

\title{Algebraic  Surfaces with $p_g=q=1, K^2=4$ and  Genus 3   Albanese Fibration }
	\author{Songbo Ling}
	\date{}
	\maketitle

 \footnotetext{This work was supported by    China Scholarship Council ``High-level university graduate program'' (No.201506010011).}

	\begin{abstract}
		In this paper, we study the Gieseker moduli space  $\mathcal{M}_{1,1}^{4,3}$ of minimal surfaces with $p_g=q=1, K^2=4$ and   genus 3  Albanese fibration.  Under the assumption that  direct image of the canonical sheaf under the Albanese map    is decomposable,
		we find  two irreducible components of $\mathcal{M}_{1,1}^{4,3}$, one of  dimension 5 and the other of dimension 4.
		
			\end{abstract}
	
	\section{Introduction}
 Minimal surfaces of general type with $p_g=q=1$ have attracted the interest of  many authors (e.g. \cite{Cat81,CC91,CC93,CP06,Hor81,Pig09,Pol09,Rit1,Rit2}). For these surfaces,   one has $1\leq K^2\leq 9$.  By results of Mo\v{\i}\v{s}ezon \cite{Moi65},  Kodaira \cite{Kod68} and Bombieri \cite{Bom73},   these surfaces belong to a finite number  of families.

For such a surface $S$,  the Albanese map $f:S\rightarrow B:=Alb(S)$   is a fibration. Since the genus $g$ of a general    fibre of $f$ (cf. \cite{CP06} Remark 1.1) and $K_S^2$ are    differentiable invariants,  surfaces with different $g$  or  $K^2$  belong to  different connected components of the Gieseker moduli space. Hence one can study the moduli space of these surfaces  according to the pair ($K^2, g$).
	
	The case  $K^2=2$ has been accomplished    by Catanese \cite{Cat81} and Horikawa \cite{Hor81} independently: these surfaces have $g=2$ and  the moduli space is irreducible;  the case  $K^2=3$ has been studied completely by Catanese-Ciliberto \cite{CC91,CC93} and Catanese-Pignatelli \cite{CP06}: these surfaces have $g=2$ or $g=3$. Moreover,  there are  three irreducible connected components for surfaces with $g=2$ and one  irreducible connected component for  surfaces with $g=3$; for  the case  $K^2=4,g=2$,  Pignatelli \cite{Pig09} found eight disjoint irreducible  components of the moduli space under the assumption that the direct image of the bicanonical sheaf under the Albanese map is a direct sum of three line bundles.

For the case  $K^2=4, g=3$, there are some known examples (see e.g. \cite{Ish05,Pol09,Rit1,Rit2}), but the moduli space is still mysterious.
	Now denote  by $\iota$ the index  of the  paracanonical system (cf. \cite{CC91} for definition)   of $S$. By a result of Barja and Zucconi \cite{BZ00}, one has either  $\iota=g-1=2$ or $\iota=g=3$ (see Lemma \ref{BZ00 iota}). By \cite{CC91} Theorems 1.2 and 1.4, $\iota$ is a topological invariant, hence it is a deformation invariant and surfaces with different $\iota$ belong to different connected components of the moduli space.

    In  this paper we study the case $K^2=4,g=3, \iota=2$. Due to technical reasons, we  begin by assuming  that the general Albanese fibre is hyperelliptic. We call surfaces with these properties surfaces of type $I$ and  denote by $\mathcal{M}_I$ their image  in   $\mathcal{M}_{1,1}^{4,3}$.
    Our main result is the following

    \begin{theorem}
    	\label{theorem 4,3}
    	$\mathcal{M}_I$ consists of two disjoint irreducible subsets  $\mathcal{M}_{I_1}$ and $\mathcal{M}_{I_2}$ of dimension 4 and 3 respectively.  Moreover, $\mathcal{M}_{I_1}$ is contained in a 5-dimensional irreducible component of  $\mathcal{M}_{1,1}^{4,3}$ and $\mathcal{M}_{I_2}$ is contained in a 4-dimensional irreducible component of  $\mathcal{M}_{1,1}^{4,3}$.
    	For the general surface in these strata the general Albanese fibre is nonhyperelliptic.
    \end{theorem}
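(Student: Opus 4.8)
The plan is to describe every surface in $\mathcal{M}_I$ through the \emph{relative canonical data} of its Albanese fibration, extract the two strata explicitly, and then run a deformation-theoretic argument to locate them inside $\mathcal{M}_{1,1}^{4,3}$.

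First I would set up the structure theorem. Since $q=1$ the base $B=\mathrm{Alb}(S)$ is elliptic, so $\omega_B\cong\mathcal{O}_B$ and the rank-$3$ sheaf $V:=f_*\omega_S=f_*\omega_{S/B}$ satisfies $\deg V=\chi(\mathcal{O}_S)=1$ (from $\chi(\mathcal{O}_S)=\deg f_*\omega_{S/B}+(g-1)(b-1)$ with $g=3$, $b=1$) and $h^0(V)=p_g=1$. Using the decomposability hypothesis I would write $V=L_1\oplus L_2\oplus L_3$ and enumerate the admissible splitting types: the conditions $\deg V=1$, $h^0(V)=1$, together with the positivity forced by $f$ being relatively minimal of general type with $K_{S/B}^2=K_S^2=4$ (Fujita-type semipositivity and the genus-$3$ slope bound) leave only finitely many types, which I expect to organize cleanly into the two strata.

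Second, I would impose the hyperelliptic hypothesis defining type $I$. For a hyperelliptic genus-$3$ fibre the relative canonical map is generically $2:1$ onto a conic bundle $C\subset\mathbb{P}(V)$, so $S$ is recovered as the canonical model of a double cover $\pi:S\to C$ branched along a divisor $R$. Encoding $C$ by a relative conic (a section of $\mathrm{Sym}^2 V\otimes N$ for a suitable line bundle $N$ on $B$) and $R$ by a section of an explicit line bundle on $C$ whose class is pinned down by $K_S^2=4$, the admissible splitting types split the double-cover data into exactly two families, which I would identify with $\mathcal{M}_{I_1}$ and $\mathcal{M}_{I_2}$. For the parameter count, the data are the modulus of $B$ (one parameter), the finitely many discrete and $\mathrm{Pic}^0(B)$-valued choices entering the $L_i$ and the conic $C$, and the branch section $R$ ranging over an open subset of a linear system; I would then quotient by the automorphisms of the data ($\mathrm{Aut}(B,O)$, rescaling of the $L_i$, and the covering involution). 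The resulting parameter space fibres over the one-dimensional moduli of elliptic curves with irreducible fibres (open subsets of products of projective spaces and abelian varieties), so each $\mathcal{M}_{I_j}$ is irreducible, and bookkeeping of the $h^0$'s minus the automorphism dimensions yields $\dim\mathcal{M}_{I_1}=4$ and $\dim\mathcal{M}_{I_2}=3$. Disjointness is then immediate, since the two strata carry different splitting types of $V$ (equivalently different numerical types of the conic bundle), and the splitting type is constant in flat families, so $\mathcal{M}_{I_1}\cap\mathcal{M}_{I_2}=\emptyset$.

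Finally, for the ``moreover'' statement I would pass to deformation theory, and this is where I expect the real difficulty. The goal is to show each $\mathcal{M}_{I_j}$ is not open in $\mathcal{M}_{1,1}^{4,3}$ but lies in a strictly larger irreducible component whose generic member has non-hyperelliptic Albanese fibre. Concretely I would compute $h^1(S,T_S)$ for a general $[S]\in\mathcal{M}_{I_j}$ and show it equals $5$ (resp.\ $4$), exactly one more than $\dim\mathcal{M}_{I_j}$, and then prove the Kuranishi space is smooth of that dimension. The cohomology of $T_S$ is accessible through the covering involution of $\pi:S\to C$: decomposing $\pi_*T_S$ into invariant and anti-invariant eigensheaves reduces everything to cohomology of explicit line bundles on the conic bundle and on $B$. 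The extra deformation direction cannot be realized within the hyperelliptic locus, because under a general deformation the $2:1$ relative canonical image must open up from a conic to a plane quartic; hence it is realized by surfaces whose general fibre is non-hyperelliptic. Since the ambient component is irreducible of dimension strictly larger than the closed stratum $\mathcal{M}_{I_j}$, its generic point lies off $\mathcal{M}_{I_j}$ and is therefore non-hyperelliptic. The main obstacle is precisely the unobstructedness step: controlling the obstruction map into $H^2(S,T_S)$ so that the component dimension equals the computed $h^1(S,T_S)$, and the eigensheaf decomposition under the hyperelliptic involution is the tool I would rely on to carry this out.
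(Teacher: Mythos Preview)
Your overall architecture---structure theorem for the hyperelliptic fibration, parameter count, then match $h^1(T_S)$ against an explicit family---is the same as the paper's, but your first concrete step is wrong and this propagates through everything else. The decomposability hypothesis does \emph{not} give $V=L_1\oplus L_2\oplus L_3$. By Catanese--Ciliberto (Lemma~\ref{diagram} here), $V_1=W_1\oplus(\text{torsion line bundles})$ with $\rank W_1=\iota$, and Lemma~\ref{BZ00 iota} forces $\iota\in\{2,3\}$; hence $V_1$ is \emph{never} a sum of three line bundles in this setting. For type $I$ one has $\iota=2$ and $V_1=E_{[0]}(2,1)\oplus N$ with $E_{[0]}(2,1)$ the indecomposable Atiyah bundle and $N$ a nontrivial torsion line bundle. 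The two strata are distinguished not by a ``splitting type'' but by the \emph{order} of $N$: Murakami's structure theorem forces $N^{\otimes 4}\cong\mathcal{O}_B$ (Lemmas~\ref{LN}--\ref{order of torsion}), and $I_1$ versus $I_2$ is order $2$ versus order $4$. This correct picture also upgrades the geometry: composing the relative canonical map with the projection $\mathbb{P}(V_1)\dashrightarrow\mathbb{P}(W_1)=B^{(2)}$ yields a \emph{bidouble} cover $h:S'\to B^{(2)}$ with explicit data $(D_1,D_2,D_3;L_1,L_2,L_3)$ (Theorems~\ref{type 1} and \ref{type 2}). The parameter count is then simply $1+\dim|D_1|+\dim|D_2|$, giving $4$ and $3$.

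The deformation step also differs in a way that matters. The paper computes $h^0(\Omega_S\otimes\omega_S)$ via Catanese's decomposition of $h_*(\Omega_S\otimes\omega_S)$ under the $(\mathbb{Z}/2)^2$-action, landing everything on the smooth surface $B^{(2)}$ where $\Omega_X=\mathcal{O}_X\oplus\omega_X$ (Lemma~\ref{cotangentsheaf}) makes the four summands routine; your single-involution eigensheaf approach would leave you on the conic bundle $\mathcal{C}$, which has at worst rational double points and is considerably less pleasant. More importantly, the paper \emph{does not} prove unobstructedness and does not need to: it exhibits explicit natural deformations of the simple bidouble cover (equations~(\ref{equation natural deformation simple bidouble cover}) and (\ref{equation natural deformation simple bidouble cover 2})), counts their parameters as $5$ (resp.\ $4$), observes this equals the upper bound $h^1(T_S)$, and concludes that the closure of this family is a component. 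The extra parameter is the section $b_1\in H^0(\mathcal{O}_X(D_1-L_2'))$, and $b_1\neq 0$ is precisely what makes the general fibre nonhyperelliptic (Remarks~\ref{remark general surface in M'1} and \ref{remark general surface in M'2}). So the ``main obstacle'' you anticipate---controlling the obstruction map into $H^2(T_S)$---simply does not arise.
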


   \vspace{3ex}
   This paper  is organized as follows.

   In section 2,  we study the relative canonical map of $f$. We prove that  every Albanese fibre  of $S$ is 2-connected.
   The main ingredient for that  is Proposition \ref{2-connected},
   which gives a sufficient  condition for a    fibre of genus 3 to  be 2-connected.

   In section 3, we restrict to surfaces of type $I$, i.e. minimal surfaces with $p_g=q=1, K^2=4, g=3, \iota=2$ and hyperelliptic Albanese fibrations.  Using Murakami's structure theorem \cite{Mu12}, we divide   surfaces of type $I$ into two types according to the order of some torsion line bundle:   type $I_1$ and  type $I_2$ (cf. Definition \ref{classification}).
   Moreover,   we show that  the subspace $\mathcal{M}_{I_1}$ of $\mathcal{M}_{1,1}^{4,3}$ corresponding to  surfaces of type $I_1$  and the subspace $\mathcal{M}_{I_2}$ of $\mathcal{M}_{1,1}^{4,3}$ corresponding to surfaces  of type $I_2$  are two disjoint closed subset of $\mathcal{M}_{1,1}^{4,3}$.

   In section 4, we study surfaces of type $I_1$. We first construct  a family $M_1$ of surfaces of type $I_1$  using  bidouble covers of $B^{(2)}$, the  second symmetric product of an elliptic curve $B$. Then we show that  every surface of type $I_1$ is biholomorphic to some surface in $M_1$ and  that $\dim \mathcal{M}_{I_1}=4$.  After that we study the natural deformations of  the general surfaces of type $I_1$ and show that they give  a 5-dimensional irreducible subset $\mathcal{M}'_1$ of   $\mathcal{M}_{1,1}^{4,3}$.
   By computing $h^1(T_S)$ for a general surface $S\in M_1$, we prove that $\overline{\mathcal{M}'_1}$ is  an irreducible   component of $\mathcal{M}_{1,1}^{4,3}$.

   In section 5, we study surfaces of type $I_2$.  An interesting fact is that every surface of type $I_2$   also arises  from  a  bidouble cover of $B^{(2)}$, but  the  branch curve is   in a different linear equivalence class. Using a similar method to the one of  section 4, we show  that  $\dim\mathcal{M}_{I_2}=3$ and that $\mathcal{M}_{I_2}$ is   contained in  a 4-dimensional  irreducible   component of $\mathcal{M}_{1,1}^{4,3}$.

\vspace{2ex}
$\mathbf{Notation~and ~conventions.}$
Throughout this paper we work over the field $\mathbb{C}$ of complex numbers. We usually denote by $S$ a minimal  surface of general type with $p_g=q=1$  and by $S'$ the canonical model of $S$.

 We denote by $\Omega_S$  the sheaf of holomorphic 1-forms on $S$,   by    $T_S:=\mathcal{H}om_{\mathcal{O}_S}(\Omega_S,\mathcal{O}_S)$ the tangent sheaf of $S$ and       by $\omega_S:=\wedge^2\Omega_S$    the sheaf of holomorphic 2-forms on $S$. $K_S$ (or simply  $K$ if  no confusion)  is  the canonical divisor of $S$, i.e. $\omega_S\cong \mathcal{O}_S(K_S)$.   $p_g:=h^0(\omega_S), q:=h^0(\Omega_S)$.   The Albanese fibration of $S$ is denoted by  $f: S\rightarrow B:=Alb(S)$. We denote by $g$ the genus of a general fibre of $f$  and set $V_n:=f_*\omega_S^{\otimes n}$. For an elliptic curve $B$, we denote by $B^{(\iota)}$ the $\iota$-th symmetric product of $B$ and by $E_u(r,1)$ ($u$ is a point on $B$) the unique indecomposable rank $r$ vector bundle over $B$ with determinant $\mathcal{O}_B(u)$ (cf.  \cite{Ati57}).

 We denote by   $\mathcal{M}_{1,1}^{4,3}$    the Gieseker moduli space of   surfaces of general type  with $p_g=q=1,K^2=4$ and    genus 3 Albanese fibrations.
For divisors, we denote    linear equivalence by  `$\equiv$' and   algebraic equivalence  by `$\sim_{alg}$'.

	\section{The relative canonical map and 2-connectedness of Albanese fibres}
In this section, unless otherwise indicated, we always assume that $S$ is a minimal surface with $p_g=q=1,K^2=4$ and a genus 3 Albanese fibration $f:S\rightarrow B:=Alb(S)$. 

First we  recall the some definitions that we shall use later  from \cite{CC91} section 1. Let $t$ be a point on $B$ and set $K\oplus t:=K+f^*(t-0)$ (where $0$ is the neutral element of the elliptic curve $B$). Since $h^0(K)=p_g=1$  and $h^0(K\oplus t)=1+h^1(K\oplus t)$ (by Riemann-Roch), by the upper semicontinuity, there is a Zariski open subset $U\ni 0$ of $B$ such that for any $t\in U$,  $h^0(K\oplus t)=1$. We denote by  $K_t$   the unique effective divisor in $|K\oplus t|$ for any $t\in U$.

We define the {\em paracanonical incidence correspondence} to be the schematic closure  $Y$ (observe that it  is a divisor) in  $S\times B$ of the set $\{(x,t)\in S\times U|x\in K_t\}$.
Let $\pi_S: S\times B\rightarrow S$ and $\pi_B: S\times B\rightarrow B$ be the natural projections.
 We define $K_t$ as the fibre of $\pi_B|_Y: Y\rightarrow B$ over $t$ for any $t\in B\setminus U$. Note that  $Y$ provides a flat family of curves on $S$, which we denote by $\{K_t\}_{t\in B}$ (or simply $\{K\}$) and call it the {\em paracanonical system} of $S$.
The {\em index} $\iota$ of $\{K\}$ is the intersection number of $Y$ with the curve $\{x\}\times B$ for a general point $x\in S$, which is exactly the degree of the map $\pi_S|_Y: Y\rightarrow S$.

Now we  define a rational map   $w': S\dashrightarrow B^{(\iota)}$  as follows:  for a general point $x\in S$,  $w'(x):=(t_1,t_2,\cdots t_\iota)$ such that   $(\pi_S|_Y)^{-1}(x)=\{(x,t_1), (x, t_2), \cdots (x,t_\iota)\}$.
 We call $w'$ the
 {\em paracanonical map}   of $S$.

\vspace{3ex}
Since  $\deg V_1=1$ and  $\rank V_1=g$,   $V_1$ has a unique decomposition into indecomposable vector bundles $V_1=\bigoplus _{i=1}^kW_i$ with $\deg W_1=1$ and    $\deg W_i=0, H^0(W_i)=0$ $(2\leq i\leq k)$ (cf. \cite{CC91} p. 56). Let $w: S\dashrightarrow \mathbb{P}(V_1)$ be the relative canonical map of $f$. Then we have

\begin{lemma} [\cite{CC91} Theorem 2.3]
	\label{diagram}
	$\rank W_1=\iota$ and  $\rank W_i=1$ $(i=2,\cdots k)$. Moreover, $W_i$ $(i=2,\cdots k)$ are nontrivial torsion line bundles (see \cite{CP06} Remark 2.10) and we have the following  commutative diagram of rational maps
	$$\xymatrix{S\ar@{-->}[r]^w \ar@{-->}[dr]^{w'} & \mathbb{P}(V_1)\ar@{-->}[d]^\varphi \\ ~ & B^{(\iota)}=\mathbb{P}(W_1)
	}$$ where $\varphi$ is induced by the  natural inclusion: $W_1 \hookrightarrow V_1$.
\end{lemma}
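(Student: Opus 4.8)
The plan is to reduce the whole statement to a single intersection count on a general Albanese fibre, after pinning down the geometric meaning of the two maps. Since $W_1$ is the unique indecomposable summand of $V_1$ of positive degree, it is an Atiyah bundle $E_u(r_1,1)$ with $r_1=\rank W_1$, and the goal is to prove $r_1=\iota$ together with the factorization $w'=\varphi\circ w$. First I would fix a general $t\in U$ and recall that the unique divisor $K_t\in|K\oplus t|=|\omega_S\otimes f^*L_t|$, with $L_t:=\mathcal{O}_B(t-0)$, corresponds under $H^0(\omega_S\otimes f^*L_t)=H^0(B,V_1\otimes L_t)$ to a section $\sigma_t$, unique up to scalar. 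Because each $W_i$ ($i\ge 2$) has degree $0$ and no sections, the twist $W_i\otimes L_t$ is again a degree-$0$ indecomposable without sections for general $t$, so in fact $\sigma_t\in H^0(W_1\otimes L_t)$. This is the observation that confines all paracanonical sections to the distinguished summand $W_1$.

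Next I would translate the incidence condition into the relative canonical picture. Restricting to a general fibre $F_b=f^{-1}(b)$ and using cohomology and base change $V_1\otimes k(b)\cong H^0(F_b,\omega_{F_b})$ (valid since $h^0(\omega_{F_b})=g$ is constant), the value $\sigma_t(b)$ lands in the subspace $W_1\otimes k(b)\subset V_1\otimes k(b)$, and $x\in K_t$ holds exactly when $\sigma_t(b)$, read as a canonical form on $F_b$, vanishes at $x$. Writing $w(x)$ for the hyperplane of canonical forms on $F_b$ vanishing at $x$ and $\varphi:\mathbb{P}(V_1)\dashrightarrow\mathbb{P}(W_1)$ for the fibrewise projection induced by $W_1\hookrightarrow V_1$ (whose effect on a hyperplane $H$ is $H\mapsto H\cap(W_1\otimes k(b))$), this reads $x\in K_t\iff\sigma_t(b)\in\varphi(w(x))$. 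Hence $w'(x)=\{t:\sigma_t(b)\in\varphi(w(x))\}$, which already exhibits $w'$ as $\varphi\circ w$ once the target is correctly identified.

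The crux is the count. The assignment $t\mapsto[\sigma_t(b)]$ defines a curve $\Gamma_b$ in $\mathbb{P}(W_1\otimes k(b))\cong\mathbb{P}^{r_1-1}$, and $\iota=\#\{t:x\in K_t\}$ is the number of points in which a hyperplane meets $\Gamma_b$, namely $\deg\Gamma_b$. I would compute this degree by a Poincaré-bundle argument: realize $\{\sigma_t\}_t$ as a generator of the pushforward $N:=q_*(p^*W_1\otimes\mathcal{P})$ along the second projection $q:B\times B\to B$, with $\mathcal{P}$ the normalized Poincaré bundle, and read off $\Gamma_b$ from the evaluation map $N\to(W_1\otimes k(b))\otimes\mathcal{P}|_{\{b\}\times B}$. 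A Grothendieck–Riemann–Roch computation on $B\times B$ yields $\deg N=-r_1$, and since $\mathcal{P}|_{\{b\}\times B}$ has degree $0$ we get $\deg\Gamma_b=\deg(N^{-1}\otimes\mathcal{P}|_{\{b\}\times B})=r_1$. Thus $\Gamma_b$ is an elliptic normal curve of degree $r_1$, forcing $\iota=r_1=\rank W_1$. The same picture identifies the fibre $\mathbb{P}(W_1)_b$ of hyperplanes with the complete linear system cut on $\Gamma_b\cong B$, hence with the fibre of the Abel–Jacobi map $B^{(\iota)}\to B$; globalizing over $b$ gives $\mathbb{P}(W_1)\cong B^{(\iota)}$ and upgrades the displayed identity to the commutativity $w'=\varphi\circ w$. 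I expect this degree computation, together with the bookkeeping of conventions and of base loci needed to make the generic-$t$, generic-$x$ reductions legitimate, to be the main obstacle.

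Finally, the statement about the remaining summands follows quickly in the present setting. With $g=3$, the Barja–Zucconi bound (Lemma \ref{BZ00 iota}) gives $\iota\in\{2,3\}$, so once $\rank W_1=\iota$ is known the leftover rank $g-\iota\le 1$ forces each $W_i$ ($i\ge 2$) to be a line bundle of degree $0$ without sections; their nontriviality and torsion are precisely the content of \cite{CP06} Remark 2.10 (Fujita semipositivity and the structure of $f_*\omega_{S/B}$ over an elliptic base), which I would simply invoke.
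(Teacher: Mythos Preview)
The paper does not prove this lemma; it quotes \cite{CC91}~Theorem~2.3 (with the torsion claim referred to \cite{CP06}~Remark~2.10), so there is no in-paper argument to compare against.

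Your outline for $\rank W_1=\iota$ and for the factorization $w'=\varphi\circ w$ is correct and follows the same mechanism as \cite{CC91}: for general $t$ the paracanonical section lives in $H^0(W_1\otimes L_t)$ alone, and a hyperplane count on the curve $t\mapsto[\sigma_t(b)]$ in $\mathbb{P}(W_1\otimes k(b))$ gives $\iota=r_1$. The Poincar\'e-bundle/GRR computation checks out ($c_1(\mathcal{P})^2=-2$ on $B\times B$ indeed yields $\deg N=-r_1$, hence $\deg\Gamma_b=r_1$), though it is heavier than needed since the identification $\mathbb{P}(E_u(r,1))\cong B^{(r)}$ is a standard fact one may simply quote.

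The one genuine issue is your appeal to Lemma~\ref{BZ00 iota} to force $g-\iota\le 1$ and hence $\rank W_i=1$ for $i\ge 2$. In the paper, Lemma~\ref{BZ00 iota} is \emph{derived from} Lemma~\ref{diagram}, so invoking it here tangles the logic. It is not strictly circular---you have already established the piece $\rank W_1=\iota$ that Lemma~\ref{BZ00 iota} needs---but it is awkward and works only because $g=3$, whereas \cite{CC91}~Theorem~2.3 holds for all $g$. The detour is also unnecessary: the reference \cite{CP06}~Remark~2.10, which you already cite for the torsion statement, gives directly (via Fujita semipositivity and the structure of $f_*\omega_{S/B}$ over an elliptic base) that every degree-$0$ indecomposable summand is a torsion line bundle, hence of rank~$1$, with no input from the value of $\iota$. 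Drop the Barja--Zucconi step and rely on that.
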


\begin{lemma}
	\label{BZ00 iota}
	Let $S$ be a minimal surface with $p_g=q=1,K^2=4$ and a genus 3 Albanese fibration. Then we have either $\iota=2$ or $\iota=3$.
	\end{lemma}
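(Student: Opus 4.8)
The plan is to read off both inequalities directly from the decomposition of $V_1=f_*\omega_S$ recorded in Lemma \ref{diagram}. By that lemma $\iota=\rank W_1$, where $W_1$ is the unique indecomposable summand of $V_1$ of positive degree. Since $V_1$ has rank $g=3$ and $W_1$ is a direct summand, the upper bound $\iota\le 3$ is immediate, and as $\deg W_1=1>0$ the summand $W_1$ is nonzero, so $\iota\ge 1$. Hence the whole content of the lemma is the lower bound $\iota\ge g-1=2$, i.e.\ excluding the case in which the positive part $W_1$ is a line bundle.

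For this lower bound I would invoke the general inequality of Barja and Zucconi \cite{BZ00}, which bounds the index of the paracanonical system of a surface with $p_g=q=1$ below by $g-1$; specialising to $g=3$ gives exactly $\iota\ge 2$, and together with $\iota\le 3$ this yields $\iota\in\{2,3\}$. The mechanism behind their bound is the interplay between the relative canonical map $w$ and the projection $\varphi\colon \mathbb{P}(V_1)\dashrightarrow \mathbb{P}(W_1)$ of Lemma \ref{diagram}: on a general Albanese fibre $F$ the map $w$ restricts to the canonical map of $F$ into $\mathbb{P}^{g-1}=\mathbb{P}(V_1\otimes k(b))$, while $\varphi$ restricts to a linear projection whose centre has dimension $g-1-\iota$. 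If $\iota$ were as small as $g-2$, this centre would be forced into such special position relative to the canonical image that $w'$ would contract the fibres, contradicting the positivity of $V_1$ forced by $S$ being of general type.

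The main obstacle is precisely this lower bound. A tempting elementary argument is that if $\iota=1$ then a general point of $S$ lies on a single paracanonical curve, so the members $\{K_t\}$ would sweep out a fibration over $B$ with $K_t^2=0$, contradicting $K_t\sim_{alg} K$ and $K^2=4$. This is, however, not conclusive: the intersection number $K_{t_1}\cdot K_{t_2}=K^2=4$ of two distinct members could be absorbed entirely by base points common to the family $\{K_t\}$, so distinct paracanonical curves need not be disjoint and no contradiction with $K^2=4$ is obtained directly. Controlling this base locus is exactly what the Clifford- and positivity-type analysis of \cite{BZ00} achieves, which is why I would rely on their theorem for the lower bound rather than attempt to reprove it here.
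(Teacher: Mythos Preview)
Your overall route matches the paper's: the upper bound $\iota\le g=3$ is immediate from $\iota=\rank W_1\le\rank V_1$ via Lemma~\ref{diagram}, and the lower bound is obtained by appeal to \cite{BZ00}. The gap is in how you invoke that reference. Barja--Zucconi's Theorem~2 is a slope-type inequality for fibred surfaces, not an unconditional statement that $\iota\ge g-1$ whenever $p_g=q=1$; the hypothesis $K_S^2=4$ is precisely what makes their bound bite. The paper first computes the slope of the Albanese fibration: since $B$ is elliptic one has $K_{S/B}^2=K_S^2=4$, while $\Delta(f):=\chi(\mathcal{O}_S)-(g-1)(g(B)-1)=1$, so $K_{S/B}^2/\Delta(f)=4$; feeding this numerical value into \cite{BZ00} Theorem~2, together with the identification $\iota=\rank W_1$ from Lemma~\ref{diagram}, is what forces $\iota\in\{2,3\}$. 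Your argument never uses $K^2=4$, which should have been a warning sign. The ``mechanism'' you sketch (special position of a projection centre relative to the canonical image) and the base-locus discussion are not how \cite{BZ00} actually operates; the missing ingredient is simply the two-line slope computation that the paper supplies.
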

\begin{proof}
	Since $K_{S/B}^2=K_S^2=4$ and $\Delta(f):=\chi(\mathcal{O}_S)-(g-1)(g(B)-1)=1$, we see that $\frac{K_{S/B}^2}{\Delta(f)}=4$. By \cite{BZ00} Theorem 2 and Lemma \ref{diagram}, 	we have either $\iota=g-1=2$ or $\iota=g=3$.
\end{proof}

Now we study the relative canonical map $w$ of $f:S\rightarrow B$.

\begin{lemma}
	\label{bpf}
	Let $F$ be a general fibre of $f$.  Then $|K_S+dF|$ is base point free for $d\gg 0$ and $w$ is a morphism.
\end{lemma}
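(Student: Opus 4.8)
The plan is to reduce both assertions to a single fibrewise statement and then to split the problem into a horizontal (base) and a vertical (fibre) direction. Write $F_t:=f^{-1}(t)$. Since $F^2=0$, adjunction gives $\mathcal{O}_S(F)|_{F_t}\cong\mathcal{O}_{F_t}$ and $\omega_S|_{F_t}\cong\omega_{F_t}$, so $\mathcal{O}_S(K_S+dF)|_{F_t}\cong\omega_{F_t}$ for every $t$ and every $d$. Moreover every fibre $F_t$ is a connected Gorenstein curve of arithmetic genus $p_a(F_t)=g=3$, so $h^0(\omega_{F_t})=h^1(\mathcal{O}_{F_t})=3$ is independent of $t$; by Grauert's theorem $V_1=f_*\omega_S$ is therefore locally free of rank $3$ and commutes with base change, and the fibre over $t$ of the evaluation map $f^*V_1\to\omega_S$ is precisely the canonical evaluation $H^0(F_t,\omega_{F_t})\otimes\mathcal{O}_{F_t}\to\omega_{F_t}$.

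First I would dispose of the horizontal direction. By the projection formula $H^0(S,\mathcal{O}_S(K_S+dF))\cong H^0(B,V_1\otimes\mathcal{O}_B(d\cdot 0))$. On the elliptic curve $B$ the bundle $V_1$ is fixed, so for $d\gg 0$ the minimal slope of $V_1\otimes\mathcal{O}_B(d\cdot 0)$ exceeds $2g(B)=2$ and the twisted bundle is globally generated; hence $H^0(B,V_1\otimes\mathcal{O}_B(d\cdot 0))\to \big(V_1\otimes\mathcal{O}_B(d\cdot 0)\big)\otimes k(t)\cong H^0(F_t,\omega_{F_t})$ is surjective for every $t$. Combining this with the previous paragraph, the base locus of $|K_S+dF|$ is, for $d\gg 0$, exactly the union over $t\in B$ of the base loci of the canonical systems $|\omega_{F_t}|$ on the individual fibres. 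I also note that the relative evaluation $f^*V_1\to\omega_S$ is the very map defining $w$ (Lemma \ref{diagram}), so $w$ is a morphism if and only if this map is surjective, i.e. if and only if $|\omega_{F_t}|$ is base point free on every fibre. Thus both assertions of the lemma follow from the single statement that $|\omega_{F_t}|$ has no base points for any $t\in B$.

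It remains to prove this fibrewise base-point-freeness, which is the heart of the matter. For a general $t$ the fibre $F_t$ is a smooth curve of genus $3$, and the canonical system of a smooth curve of genus $\geq 1$ is base point free, so these fibres cause no trouble. The difficulty is concentrated on the finitely many singular fibres, where $F_t$ may be reducible or non-reduced: a Riemann--Roch computation shows that $x\in F_t$ can be a base point of $|\omega_{F_t}|$ only when some component behaves like a $\mathbb{P}^1$ meeting the rest of $F_t$ too loosely, which is exactly a failure of $2$-connectedness. I would therefore invoke the structural fact proved in this section that every Albanese fibre of $S$ is $2$-connected (Proposition \ref{2-connected} and its consequences), together with the base-point-freeness theorem for the dualizing sheaf of a $2$-connected Gorenstein curve (Catanese--Franciosi--Hulek--Reid). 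This yields that $|\omega_{F_t}|$ is base point free for every $t$, completing both claims. The main obstacle is precisely this control of the degenerate fibres: everything else is formal, while ruling out base points of $|\omega_{F_t}|$ on singular fibres is what forces the $2$-connectedness input.
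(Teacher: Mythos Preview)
Your reduction is sound: for $d\gg 0$ the twisted bundle $V_1\otimes\mathcal{O}_B(d\cdot 0)$ is globally generated, so the base locus of $|K_S+dF|$ is exactly the union over $t$ of the base loci of $|\omega_{F_t}|$, and $w$ is a morphism precisely when every such fibrewise canonical system is free. That part is fine.

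The gap is in how you handle the singular fibres: you appeal to the fact that every Albanese fibre is $2$-connected, but in this paper that fact is \emph{downstream} of the lemma you are proving. Proposition~\ref{2-connected} is the implication ``$|K_F|$ base point free $\Rightarrow$ $F$ is $2$-connected'', not the other direction; and the statement that every fibre is $2$-connected (Theorem~\ref{2-connected theorem}) is obtained by combining Proposition~\ref{2-connected} with Corollary~\ref{KF bpf}, which in turn is a consequence of the very Lemma~\ref{bpf} you are trying to establish. So your argument is circular: you need $2$-connectedness to get base-point-freeness, but the paper deduces $2$-connectedness \emph{from} base-point-freeness. Unless you can supply an independent proof of $2$-connectedness that does not pass through $|K_F|$ being free, this route does not work.

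The paper's proof avoids this entirely: it never looks at individual singular fibres. Instead it studies the class of the image surface $S_0\subset\mathbb{P}(V_1)$, writes $S_0\sim_{\mathrm{alg}}\alpha T+\beta H$, and by comparing $(K_S+dF)^2$ with the degree of $S_0$ forces $\beta\le 0$, with equality exactly when $|K_S+dF|$ is base point free. The opposite inequality $\beta\ge 0$ is then extracted case by case (hyperelliptic vs.\ nonhyperelliptic general fibre, $\iota=2$ vs.\ $\iota=3$) from the structure of $V_1$ and, in the $\iota=2$ nonhyperelliptic case, from an analysis of the paracanonical map. This is global intersection theory on $\mathbb{P}(V_1)$ rather than fibrewise curve theory, and it is what lets the paper establish base-point-freeness \emph{first} and then harvest $2$-connectedness as a corollary.
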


\begin{proof}
	
	Denote by $|\mathfrak{m}|$ the movable part of $|K_S+dF|$ and by $\mathfrak{z}$ the fixed part of $|K_S+dF|$. Set  $S_0:=w(S)$. Denote by $T$ the (tautological) divisor on $\mathbb{P}(V_1)$ such that $\pi_*\mathcal{O}(T)=V_1$  and by $H$ the  fibre of $\pi: \mathbb{P}(V_1)\rightarrow B$.
	
	For $d>>0$, let $\xi: \mathbb{P}(V_1)\rightarrow \mathbb{P}^n$ be the holomorphic map defined by the linear system $|T+dH|$, where $n=h^0(T+dH)$. Let $\psi: S\dashrightarrow \mathbb{P}^n$ be the rational map defined by $|K_S+dF|$ (note that $h^0(T+dH)=h^0(V_1(d\cdot p))=h^0(K_S+dF)$, where $p=\pi(H)$). Then we have  $|w^*(T+dF)|\cong |\psi^*(K_S+dF)|$ and the following  diagram
	$$\xymatrix{ S\ar@{-->}[r]^w\ar@{-->}[rd]^\psi & \mathbb{P}(V_1)\ar[d]^\xi \\
		~ & \mathbb{P}^n  }$$
	commutes.	
	Hence  the indeterminacy points of  $w$ are exactly  the base points of  the movable part $|\mathfrak{m}|$ of $|K_S+dF|$. So we only need to show that $|K_S+dF|$ is base point free for $d\gg 0$.

	(i) If   $F$ is hyperelliptic,
	then the map $w: S\dashrightarrow S_0$ is of  degree 2.  Assuming  $S_0 \sim_{alg} 2T+\beta H$ for some $\beta$,   since $K_S, \mathfrak{m}, F$ are all nef divisors, we have
	$$4+8d=(K_S+dF)^2=\mathfrak{m}^2+\mathfrak{m}\mathfrak{z}+(K_S+dF)\mathfrak{z}\geq \mathfrak{m}^2 \geq 2\deg S_0 =2(T+dH)^2(2T+\beta H)=4+8d+2\beta.$$
	It follows that $\beta \leq 0$ and  that $\beta=0$ if and  only if  $K_S\mathfrak{z}=\mathfrak{m}\mathfrak{z}=F\mathfrak{z}=0$. Since  $K_S+dF$ is effective, big and  nef , by  \cite{ML89} Chap.I, Lemma 4.6, $K_S+dF$ is 1-connected. Since $K_S+dF=\mathfrak{m}+\mathfrak{z}$, we see that $\mathfrak{m}\mathfrak{z}=0$ if and only if $\mathfrak{z}=0$. Thus $\beta=0$ if and only if $|\mathfrak{m}|$ is base point free and $\mathfrak{z}=0$, i.e.
	$|K_S+dF|$ is base point free. Hence it suffices to show  that $\beta\geq 0$.
	
By Lemma \ref{BZ00 iota}, 	we have either $\iota=2$ or $\iota=3$. Now we discuss the two cases separately.

	If $\iota=2$, w.l.o.g. we can assume that  $V_1=E_{[0]}(2,1)\oplus N$ with $N$ a nontrivial torsion line bundle over $B$ (see Lemma \ref{diagram}).  Note that    $H^0(2T+\beta H)\cong  H^0(\pi_*\mathcal{O}_{\mathbb{P}(V_1)}(2T+\beta H))\cong H^0(S^2(V_1)(\beta\cdot p))$, where $p=\pi(H)$ is a point on $B$. Since $S^2(V_1)=\mathcal{O}_B(\eta_1)\oplus \mathcal{O}_B(\eta_2)\oplus\mathcal{O}_B(\eta_3)\oplus E_{[0]}(2,1)\otimes N\oplus N^{\otimes 2}$ (here $\eta_1,\eta_2,\eta_3$ are the three nontrivial 2-torsion points on $B$), we see that $h^0(2T+\beta H)>0$ only if $\beta\geq -1$.
	
	If $\beta =-1$, then $|2T-H|$ is nonempty if and only if $H=H_{\eta_i}:=\pi^*\mathcal{O}_B(\eta_i)$ for $i\in \{1,2,3\}$. Since  $h^0(2T-H_{\eta_i})=h^0(S^2(V_1)(-\eta_i))=1$, $|2T-H_{\eta_i}|$  contains a unique effective divisor $S_0$. Note that $S_0$ is a cone over a curve $C \sim _{alg} 2D-E$ lying on $B^{(2)}$, where $D$ (resp. $E$) is a section (resp. fibre)  of $B^{(2)}\rightarrow B$. Hence  $\varphi(S_0)=C$ is a curve.  By Lemma \ref{diagram}, we have $w'(S)=\varphi\circ w(S)=\varphi (S_0)=C$. On the other hand, one sees easily from the definition of the paracanonical map (cf. section 2.1) that $w'(S)=B^{(2)}$. Hence we get a contradiction.  Therefore  we have $\beta\geq 0$.
	
	If $\iota=3$, then  $V_1$ is indecomposable. Since  $S_0 \sim_{alg} 2T+\beta H$ and $S_0$ is effective, by \cite{CC93}  Theorem 1.13, we have $\beta\geq 0$.
	
	\vspace{2ex}
	(ii) If $F$ is nonhyperelliptic, then the map $w: S\dashrightarrow S_0$ is birational.  Assume that   $S_0\sim_{alg} \alpha T+\beta H$ for some $\alpha, \beta$.  Since  $F$ is of   genus 3, we have $T(\alpha T+\beta H)H=\alpha=4$.
	
	Since $w$ is birational, we have
	$$4+8d=(K_S+dF)^2\geq \mathfrak{m}^2\geq (T+dH)^2(\alpha T+\beta H)=\alpha+2d\alpha+\beta \geq 4+8d+\beta.$$
	Thus we have  $\alpha=4$ and  $\beta\leq 0$. Moreover $\beta=0$ if and only if  $|K_S+dF|$ is base point free. So it suffices to  show that   $\beta\geq 0$.
	
	Recall that we have either  $\iota=2$ or $\iota=3$.

	If $\iota=3$, then $V_1$ is indecomposable.  By \cite{CC93}  Theorem 1.13, $|4T+\beta H|\neq \emptyset$ if and only if $\beta\geq -1$. If $\beta=-1$, by \cite{CC93} Theorem 3.2, a general element $S_t$ in $|4T-H|$ is a smooth surface with ample canonical divisor. Note that  $S\rightarrow S_0$ is  the minimal resolution of $S_0$.  Since $S_0$ is irreducible,  $K_{S_0}$ is Cartier, and  $K_{S_0}^2=K_{S_t}^2=3$  ($K_{S_0}\sim^{alg} T|_{S_0}$, so $K_{S_0}^2=T^2(4T-H)=3$), by  \cite{KSB} Proposition 2.26, we have $4=K_S^2\leq K_{S_0}^2=3$, a contradiction. Therefore we have $\beta\geq 0$.
	
If $\iota=2$,  we consider the paracanonical system of $S$.
By Lemma \ref{diagram}, we have the following commutative diagram:
	$$\xymatrix{S\ar@{-->}[r]^w \ar@{-->}[dr]^{w'} & S_0\subset \mathbb{P}(V_1)\ar@{-->}[d]^\varphi \\ ~ & B^{(2)}=\mathbb{P}(W_1)
	}$$
	Since $S_0\sim_{alg}4T+\beta H$ and   $\varphi$ is the projection  induced by the  natural inclusion: $W_1 \hookrightarrow V_1$, we see that the degree of the map $\varphi|_{S_0}: S_0\dashrightarrow B^{(2)}$ is 4 (fibrewise, it  maps a quartic  plan  curve to a line). Hence $\deg w'=4$. Now write $\{K\}=\{M\}+Z$, where $\{M\}$ is the movable part of $\{K\}$ and $Z$ is the fixed part of $\{K\}$.  Note that  the paracanonical map is  defined by $\{M\}$.  
	
	Let $\lambda: \tilde{S}\rightarrow S$ be the shortest composition of blow-ups such that the movable part $\{\tilde{M}\}$ of $\lambda^*\{M\}$ is base point free.  Then we have $4=K_S^2=M^2+MZ+K_SZ\geq M^2\geq \tilde{M}^2=\deg (\lambda\circ\varphi|_{S_0})\cdot 1=4$. In particular, we have  $MZ=0$ and $M^2=\tilde{M}^2$. By the 2-connectedness of canonical divisors and the definition of $\lambda$, we see that $Z=0$ and $\{M\}$ is base point free, i.e. $\{K\}$ is base point free. 
	
Now we consider the linear system $|K_S+F_{t_0}| (t_0\in B)$.  For any fixed point $x\in S$, since $\{K_t\}_{t\in B}$ and $\{F_t\}_{t\in B}$ are both base point free, if we take general $t\in B$,   then the  divisor $K_t+F_{t_0-t}\in |K_S+F_{t_0}|$ does not contain $x$. Hence $|K_S+F_{t_0}|$ is base point free.  So    $|K_S+dF|$ is base point free for any $d\geq 1$. Therefore  the relative canonical map is base point free. 
	
\end{proof}

Since the restriction map $H^0(S, K_S+dF)\rightarrow H^0(F, K_F)$ is surjective  for $d\gg 0$ (cf. Horikawa \cite{Hor77} Lemmas 1 and  2), we   get the following

\begin{cor}
	\label{KF bpf}
	$|K_F|$ is base point free for any fibre $F$ of $f$.
\end{cor}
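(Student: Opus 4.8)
The plan is to deduce the statement from the base-point-freeness of $|K_S+dF|$ on the whole surface, established in Lemma \ref{bpf}, by restricting to a single fibre. Fix an arbitrary fibre $F_0=f^{-1}(t_0)$ and choose $d\gg 0$. First I would identify the restriction of the line bundle $K_S+dF$ to $F_0$ with the dualizing sheaf $\omega_{F_0}$. Taking $F$ to be a general fibre disjoint from $F_0$, one has $\mathcal{O}_S(F)|_{F_0}\cong\mathcal{O}_{F_0}$ since $F\cap F_0=\emptyset$; moreover $\mathcal{O}_S(F_0)|_{F_0}\cong\mathcal{O}_{F_0}$ because the normal bundle of a fibre is the pullback of the (trivial) normal bundle of a point of $B$. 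Hence by adjunction
\[
(K_S+dF)|_{F_0}\cong K_S|_{F_0}\cong (K_S+F_0)|_{F_0}\cong\omega_{F_0}=K_{F_0}.
\]

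Next I would use that the restriction of a globally generated line bundle to a closed subscheme is again globally generated: since $|K_S+dF|$ is base point free on $S$ by Lemma \ref{bpf}, for every $x\in F_0$ there is a section $s\in H^0(S,K_S+dF)$ with $s(x)\neq 0$, and under the identification above its restriction $s|_{F_0}\in H^0(F_0,K_{F_0})$ does not vanish at $x$. Thus the sub-linear system of $|K_{F_0}|$ cut out by sections coming from $S$ is already base point free, and a fortiori so is $|K_{F_0}|$. The surjectivity of $H^0(S,K_S+dF)\to H^0(F_0,K_{F_0})$ for $d\gg 0$ (cf. \cite{Hor77}) then shows that this sub-system is in fact the complete canonical system, so nothing is lost in passing from $S$ to $F_0$.

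Since $t_0\in B$ was arbitrary, this proves $|K_{F_0}|$ base point free for \emph{every} fibre, not merely the general one. The main point requiring care is the adjunction identification $(K_S+dF)|_{F_0}\cong\omega_{F_0}$ uniformly across all fibres, including singular or non-reduced ones: a fibre is an effective divisor on the smooth surface $S$, hence a Gorenstein local complete intersection, so adjunction for the dualizing sheaf applies verbatim, and the triviality of $\mathcal{O}_S(F_0)|_{F_0}$ holds regardless of the scheme structure of $F_0$. With this identification in hand the argument is uniform in $F_0$, which is precisely what allows a single globally generated bundle $K_S+dF$ on $S$ to control every fibre of $f$ at once.
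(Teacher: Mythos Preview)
Your argument is correct and follows the same route as the paper: use Lemma~\ref{bpf} to get $|K_S+dF|$ base point free on $S$, then restrict to an arbitrary fibre $F_0$ via the adjunction identification $(K_S+dF)|_{F_0}\cong\omega_{F_0}$. The paper's proof is the one-liner invoking the surjectivity of $H^0(S,K_S+dF)\to H^0(F,K_F)$ for $d\gg0$ (Horikawa); your version spells out the adjunction step for possibly non-reduced fibres and correctly observes that surjectivity is not even needed for the base-point-freeness conclusion, since the restricted sub-system already separates points of $F_0$.
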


Catanese-Franciosi (\cite{CF93} Corollary 2.5) proved that:  if $C$ is  a 2-connected curve of  genus $p_a(C)\geq 1$ lying on a smooth algebraic surface, then $|K_C|$ is base point free. However,  the converse is not true in general, e.g. if we take $C$  the union of two distinct smooth   fibres of a genus 2 fibration, then  $|K_C|$ is base point free, but $C$ is not even 1-connected. Now we  show that the converse is  true in the following  case:

\begin{pro}
	\label{2-connected}
	Let  $f:S\rightarrow B$ be a relatively minimal   genus 3  fibration and let $F$ be any fibre of $f$. If  $|K_F|$ is base point free, then $F$ is 2-connected.
\end{pro}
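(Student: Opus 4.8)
The plan is to argue the contrapositive: assuming $F$ is \emph{not} $2$-connected, I would produce a base point of $|K_F|$, contradicting the hypothesis. Throughout recall that the arithmetic genus is constant in the flat family $f$, so $p_a(F)=3$, and by adjunction $K_S\cdot F=2p_a(F)-2=4$ while $F^2=0$.

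First I would reduce to a convenient decomposition. If $F$ is not $2$-connected there is an effective decomposition $F=C+D$ with $C,D>0$ and $C\cdot D\le 1$. Since $F^2=0$ and $C,D$ are supported on the fibre, Zariski's lemma gives $C^2\le 0$, $D^2\le 0$, hence $2\,C\cdot D=-(C^2+D^2)\ge 0$; moreover $C\cdot D=0$ forces $C^2=D^2=0$, i.e.\ both $C,D$ proportional to $F$ (the equality case of Zariski's lemma), which is the ``multiple fibre'' situation. Setting that degenerate case aside for the moment, I may assume $C\cdot D=1$. Then neither $C$ nor $D$ is proportional to $F$ (that would give $C\cdot D=0$), so $C^2,D^2<0$; together with $C^2+D^2=-2$ this forces
\[
C^2=D^2=-1 .
\]
By adjunction $K_S\cdot C=2p_a(C)-2-C^2=2p_a(C)-1$, and likewise for $D$. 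Because $f$ is relatively minimal, no fibre contains a $(-1)$-curve, so every irreducible component $\Gamma\subsetneq F$ has $K_S\cdot\Gamma\ge 0$; hence $K_S\cdot C\ge 0$, and being odd it is $\ge 1$, i.e.\ $p_a(C)\ge 1$, and symmetrically $p_a(D)\ge 1$. The genus formula $p_a(F)=p_a(C)+p_a(D)+C\cdot D-1$ gives $p_a(C)+p_a(D)=3$, so $\{p_a(C),p_a(D)\}=\{1,2\}$; relabel so that $p_a(C)=1$. This is exactly where the hypotheses $g=3$ and relative minimality are used.

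The heart of the argument is to restrict $\omega_F$ to the arithmetic-genus-$1$ piece $C$. From $F=C+D$ one has the exact sequence
\[
0\longrightarrow \omega_D\longrightarrow \omega_F\longrightarrow \omega_C(D)\longrightarrow 0,
\]
where $\omega_C(D)=\omega_C\otimes\mathcal{O}_C(D)=\mathcal{O}_S(K_S+F)|_C$, the last arrow is restriction to $C$, and its kernel $\omega_D$ consists of the global sections of $\omega_F$ vanishing identically on $C$. Now $p_a(C)=1$ gives $\omega_C\cong\mathcal{O}_C$, so $\omega_C(D)$ is a line bundle of degree $C\cdot D=1$ on the connected curve $C$; by Riemann--Roch $h^0(\omega_C(D))=1$, and its unique nonzero section vanishes at the single point $q\in C$ cut out by the unique effective divisor in this degree-$1$ class. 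Since the whole target is spanned by a section vanishing at $q$, every section of $\omega_F$ restricts on $C$ to a section vanishing at $q$, while those in $H^0(\omega_D)$ vanish on all of $C\ni q$. Hence \emph{every} section of $\omega_F$ vanishes at $q$, so $q$ is a base point of $|K_F|$ --- the desired contradiction. In the generic case $C\cdot D=1$ means $C,D$ meet in a single node $p$ with $q=p$; equivalently, by the Rosenlicht description of $\omega_F$ at $p$, a section is a pair of differentials on $C$ and $D$ with opposite residues at $p$, and since no differential on a curve has a single simple pole both residues vanish, so every section vanishes at $p$.

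The main obstacle is the book-keeping in the non-generic configurations. When $C$ and $D$ share a component, $C\cap D$ is not a single reduced node, and I must verify that the point $q\in C$ produced above is genuinely a base point of $\omega_F$ (i.e.\ track $q$ relative to $\mathrm{supp}(D)$ and check the sections coming from $\omega_D$ also vanish there). Separately, the degenerate case $C\cdot D=0$ --- a multiple fibre $F=mF_0$ --- needs its own treatment: here one restricts $\omega_F$ to $F_0$, uses that $\mathcal{O}_{F_0}(F_0)$ is a nontrivial torsion line bundle, and shows that the restricted linear system $|\,\omega_F|_{F_0}\,|$ is zero-dimensional and so has base points. It is again the hypotheses $g=3$ and relative minimality that keep this list of degenerate configurations finite and guarantee that a genus-$1$ piece (with $\deg\omega_C=0$) is always available to drive the section-vanishing computation.
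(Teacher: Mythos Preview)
Your strategy is exactly the paper's: exhibit a $p_a=1$ subcurve on which $\omega_F$ restricts to a degree-$1$ line bundle, and extract a base point from that. Your exact sequence
\[
0\longrightarrow \omega_D\longrightarrow \omega_F\longrightarrow \omega_C(D)\longrightarrow 0
\]
is the unpackaged form of the Catanese--Franciosi criterion (the paper's Lemma~\ref{base point}), and your treatment of the no-common-component case and your outline of the multiple-fibre case are both correct and match the paper.

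There is, however, a genuine gap in the shared-component case, and it is not just book-keeping. Two points:
\begin{itemize}
\item The assertion ``$p_a(C)=1$ gives $\omega_C\cong\mathcal O_C$'' is false for a merely $1$-connected reducible $C$; more to the point, for arbitrary $C$ with $C\cdot D=1$ and $p_a(C)=1$, the line bundle $\mathcal O_C(D)$ can have \emph{negative} degree on a component of $C$ that also lies in $D$, so your Riemann--Roch step ($h^1=0$, hence $h^0=1$) and the claim that the section vanishes at a single smooth point both break down.
\item The fix is not cosmetic: one must choose the genus-$1$ piece \emph{minimally}. The paper does this via Mendes Lopes' lemma (Lemma~\ref{elliptic cycle} here): the minimal $D$ with $D\cdot(F-D)=1$ is automatically $2$-connected, and either $D\subset F-D$ or they share no component. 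In the $D\subset F-D$ case, minimality forces $K_S\cdot D=1$; the paper then checks component-by-component (the unique component with $K_S\cdot C_0=1$ is a smooth rational $(-3)$-curve, the rest are $(-2)$-curves) that $\mathcal O_D(F-D)$ has degree $\ge 0$ on every component. Only then does Lemma~\ref{vanishing} apply to give $\mathcal O_D(F-D)\cong\mathcal O_D(x)$ with $x$ a \emph{smooth} point of $D$, and the base-point criterion finishes.
\end{itemize}
So the missing idea is precisely ``take the minimal decomposition and verify the componentwise degree condition''; without it the reducible shared-component case does not go through.

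For the multiple fibre, your sketch is right but you should also note that $K_S\cdot F'$ is \emph{even} (since $F'^2=0$), which together with $m\,K_S\cdot F'=4$ forces $m=2$; then $\omega_F|_{F'}=\omega_{F'}(F')$ has $\chi=1$ and $h^1=h^0(\mathcal O_{F'}(-F'))=0$ because $\mathcal O_{F'}(F')$ is nontrivial torsion, giving $h^0=1$ as you want.
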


To prove Proposition \ref{2-connected}, we need the following four lemmas.
\begin{lemma}[Zariski's Lemma, \cite{BHPV} Chap. III, Lemma 8.2]
	\label{Zariski' lemma}
	Let $F=\sum n_iC_i$ ($n_i>0$, $C_i$ irreducible) be a fibre of the fibration $f:S\rightarrow B$. Then we have
	
	(i) $C_iF=0$ for all $i$;
	
	(ii) If $D=\sum_i m_iC_i$, then $D^2\leq 0$, and $D^2=0$ holds  if and only if $D=rF$ for some $r\in \mathbb{Q}$.
\end{lemma}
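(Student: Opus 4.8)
The plan is to prove the two assertions separately: (i) will follow from the algebraic equivalence of distinct fibres, and (ii) from an elementary ``Laplacian-type'' property of the rescaled intersection matrix of the components of $F$.

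For (i), I would fix a second point $t'\in B$ distinct from $t:=f(F)$ and consider the fibre $F':=f^{-1}(t')$, which is algebraically equivalent to $F$. Each component $C_i$ of $F$ lies over $t$, so its support is disjoint from $F'$, whence $C_iF'=0$. Since algebraic equivalence preserves intersection numbers, $C_iF=C_iF'=0$ for every $i$, which is exactly (i). In particular $F^2=\sum_i n_iC_iF=0$.

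For (ii), set $a_{ij}:=n_in_j(C_iC_j)$. Two distinct irreducible curves meet non-negatively, so $a_{ij}\geq 0$ for $i\neq j$, and by (i) the row sums vanish: $\sum_j a_{ij}=n_i(C_iF)=0$. Given $D=\sum_i m_iC_i$, I would write $m_i=n_it_i$ with $t_i=m_i/n_i\in\mathbb{Q}$, so that $D^2=\sum_{i,j}a_{ij}t_it_j$. Replacing each diagonal term via $a_{ii}=-\sum_{j\neq i}a_{ij}$ and pairing the symmetric off-diagonal contributions yields the identity
$$D^2=-\sum_{i<j}a_{ij}(t_i-t_j)^2.$$
Because every $a_{ij}\geq 0$, this gives $D^2\leq 0$ immediately.

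For the equality statement, $D^2=0$ forces $t_i=t_j$ whenever $a_{ij}>0$, i.e.\ whenever $C_i$ and $C_j$ meet. Here I would use that the fibre $F$ is connected, so the dual graph of its components is connected; the relation ``$t_i=t_j$ whenever $C_iC_j>0$'' then propagates along this graph and forces all the $t_i$ to equal a common rational number $r$. Hence $m_i=rn_i$ for all $i$, that is $D=rF$; the converse is clear, since $D=rF$ gives $D^2=r^2F^2=0$ by (i). The main point to handle carefully is not any single step but the symmetric bookkeeping producing the $(t_i-t_j)^2$ identity, together with the observation that it is precisely the connectedness of $\supp F$ that upgrades the coefficientwise coincidence into the global conclusion $D=rF$.
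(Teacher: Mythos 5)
Your proof is correct. The paper offers no proof of this statement — it is quoted verbatim from \cite{BHPV} (Chap.\ III, Lemma 8.2) — and your argument is precisely the standard one found there: part (i) via algebraic equivalence of $F$ with a disjoint fibre $F'$, and part (ii) via the vanishing row sums of the matrix $a_{ij}=n_in_j(C_iC_j)$, the identity $D^2=-\sum_{i<j}a_{ij}(t_i-t_j)^2$, and the connectedness of the fibre (which holds since a fibration has connected fibres) to propagate $t_i=t_j$ across the dual graph and conclude $D=rF$.
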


\begin{lemma}[\cite{CF93} Corollary 2.5]
	\label{base point}
	Let $C$ be a curve of genus $p_a(C)\geq 1$ lying on a smooth algebraic surface. If $C$ is  1-connected, then the base points of $|K_C|$ are precisely the points $x$ such that there exists a decomposition $C=Y+Z$ with $YZ=1$, where $x$ is smooth for $Y$ and $\mathcal{O}_Y(x)\cong \mathcal{O}_Y(Z)$.
	
\end{lemma}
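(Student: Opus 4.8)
The plan is to argue by contraposition: assuming $F$ is not $2$-connected, I will exhibit a base point of $|K_F|$. Throughout I use that $F$ is a fibre of a relatively minimal genus $3$ fibration, so $p_a(F)=3$, $F^2=0$, $K_SF=4$ (by adjunction), and $F$ contains no $(-1)$-curve. The first step is to split into two cases according to whether $F$ is $1$-connected, settling the non-$1$-connected case completely with Zariski's Lemma (Lemma \ref{Zariski' lemma}). If $F=A+B$ with $A,B>0$ and $AB\le 0$, then since $A^2,B^2\le 0$ and $0=F^2=A^2+2AB+B^2$, one is forced to have $A^2=B^2=AB=0$, whence $A=rF$ and $B=sF$ for rationals $r,s$ by part (ii) of Lemma \ref{Zariski' lemma}. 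This occurs with $A,B$ proper only when $F=dF_0$ is a multiple fibre with $F_0$ primitive and $d\ge 2$; the numerical condition $2p_a(F_0)-2=K_SF_0=4/d$ then forces $d=2$ and $p_a(F_0)=2$. Thus $F$ fails to be $1$-connected exactly when $F=2F_0$ with $p_a(F_0)=2$.

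Second, I treat the case where $F$ is $1$-connected but not $2$-connected. Then there is a decomposition $F=A+B$ ($A,B>0$) with $AB=1$. Since the total intersection number is $1$, the schematic intersection $A\cap B$ is a single reduced point $x$ at which $A$ and $B$ meet transversally and are both smooth, because local intersection multiplicity $1$ forces the two local equations to be a regular system of parameters. Taking $Y=A$, $Z=B$, I then have $x$ smooth on $A$ and $\mathcal{O}_A(x)\cong\mathcal{O}_A(A\cap B)\cong\mathcal{O}_S(B)|_A=\mathcal{O}_A(Z)$. By the Catanese–Franciosi criterion (Lemma \ref{base point}), which applies since $p_a(F)=3\ge 1$ and $F$ is $1$-connected, $x$ is a base point of $|K_F|$. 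The one technical point to check here is that one may choose the witnessing decomposition so that $A$ and $B$ share no common component, so that $A\cap B$ is genuinely zero-dimensional of length $AB=1$.

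Third, and this is the \emph{main obstacle}, I handle the multiple fibre $F=2F_0$. Writing $\eta:=\mathcal{O}_{F_0}(F_0)$ for the normal bundle of $F_0$, restriction and adjunction give the exact sequence
\[
0\longrightarrow \omega_{F_0}\longrightarrow \omega_F\longrightarrow \omega_{F_0}\otimes\eta\longrightarrow 0,
\]
since $\omega_F=\omega_S(2F_0)|_{2F_0}$, $\omega_S(F_0)|_{F_0}=\omega_{F_0}$, and $\omega_S(2F_0)|_{F_0}=\omega_{F_0}\otimes\eta$. Here $\eta$ is a nontrivial $2$-torsion line bundle of degree $0$ (its order equals the multiplicity $2$ of the fibre, and $\eta^{\otimes 2}=\mathcal{O}_S(F)|_{F_0}=\mathcal{O}_{F_0}$), so $h^0(\eta)=0$, and by Riemann–Roch and Serre duality on $F_0$ one gets $h^0(\omega_{F_0}\otimes\eta)=1$ while $h^0(\omega_{F_0})=p_a(F_0)=2$. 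As $h^0(\omega_F)=p_a(F)=3=2+1$, the sequence stays exact on global sections. The key observation is that every section coming from the subsheaf $\omega_{F_0}\hookrightarrow\omega_F$ maps to $0$ in the quotient $\omega_{F_0}\otimes\eta=\omega_F|_{(F_0)_{\mathrm{red}}}$, hence vanishes at every point of $(F_0)_{\mathrm{red}}$; therefore the base locus of $|K_F|$ is exactly the zero locus on $F_0$ of a lift of a generator of $H^0(\omega_{F_0}\otimes\eta)$, a section of a line bundle of degree $\deg\omega_{F_0}=2>0$. This zero locus is nonempty, producing a base point of $|K_F|$ and completing the contrapositive.

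I expect the delicate points to be concentrated in the third step: verifying that $\eta$ is genuinely nontrivial (equivalently, that the fibre multiplicity is reflected in the order of the normal bundle) and that the cohomology computations on the possibly singular and non-reduced curve $F_0$ are valid, together with the common-component subtlety flagged in the second step. These are precisely the places where I anticipate needing the two auxiliary lemmas accompanying Lemma \ref{Zariski' lemma} and Lemma \ref{base point}.
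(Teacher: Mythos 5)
Your proposal does not prove the statement in question. Lemma \ref{base point} is a general fact about an \emph{arbitrary} 1-connected curve $C$ with $p_a(C)\geq 1$ on a smooth surface: it characterizes the base points of $|K_C|$ as exactly those $x$ admitting a decomposition $C=Y+Z$ with $YZ=1$, $x$ smooth on $Y$ and $\mathcal{O}_Y(x)\cong\mathcal{O}_Y(Z)$. In the paper this lemma carries no proof at all --- it is quoted verbatim from Catanese--Franciosi \cite{CF93}, Corollary 2.5. What you have written is instead a proof sketch of Proposition \ref{2-connected} (if $|K_F|$ is base point free for a fibre of a relatively minimal genus 3 fibration, then $F$ is 2-connected), and in the middle of your second step you explicitly invoke ``the Catanese--Franciosi criterion (Lemma \ref{base point})''. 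So, read as an argument for the stated lemma, your text is circular: it assumes the lemma as a black box and derives a different result. A genuine proof of Lemma \ref{base point} would have to work intrinsically on $C$: identify base points of $|K_C|$ with points where $H^0(\omega_C)\to\omega_C\otimes\mathbb{C}(x)$ fails to be surjective, dualize, and extract the decomposition $C=Y+Z$ with $YZ=1$ by a connectedness argument in the Ramanujam--Francia style (this is how \cite{CF93} proceeds); none of this machinery appears anywhere in your proposal.

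Even if one regrades your text against Proposition \ref{2-connected} --- the statement it actually addresses --- there is a genuine gap in your second step. You assume the witnessing decomposition $F=A+B$ with $AB=1$ can be chosen so that $A$ and $B$ have no common component, and you flag this as ``the one technical point to check''; but it is not a technicality, it is the heart of the paper's proof, and it can fail. The paper chooses $D$ realizing the minimum of $K_SD$ among subdivisors with $D(F-D)=1$ and applies Mendes Lopes' Lemma \ref{elliptic cycle}: either $D$ and $F-D$ share no components (your transversality argument then works and matches the paper's case (2)), or $D\subset F-D$. In the latter case the paper must analyze the 2-connected elliptic cycle $D$ (here $K_SD=1$, $D^2=-1$, $p_a(D)=1$, and $D$ may be reducible, consisting of a $(-3)$-curve plus $(-2)$-curves) and invoke Lemma \ref{vanishing} to produce a \emph{smooth} point $x$ of $D$ with $\mathcal{O}_D(x)\cong\mathcal{O}_D(F-D)$ before Lemma \ref{base point} can be applied. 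Your proposal omits this case entirely. Your first and third steps (reduction of the non-1-connected case to $F=2F_0$ via Lemma \ref{Zariski' lemma} and parity of $K_SF_0$, and the base point on the multiple fibre via $0\to\omega_{F_0}\to\omega_F\to\omega_{F_0}\otimes\eta\to 0$ with $\eta$ nontrivial 2-torsion and $h^0(\omega_{F_0}\otimes\eta)=1$) do essentially reproduce the paper's case (i), but they rescue neither the missing case above nor, more fundamentally, the fact that the lemma you were asked to prove remains unproved.
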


\begin{lemma} [\cite{ML89}  Chap. I, Lemmas 2.2 and 2.3]
	\label{elliptic cycle}
	Assume that  $D$ is  a 1-connected  divisor  on a smooth algebraic surface  and  let $D_1\subset D$ be minimal subject to the condition $D_1(D-D_1)=1$. Then $D_1$ is 2-connected and either
	
	(i) $D_1\subset D-D_1$  or
	
	(ii) $D_1$ and $D-D_1$ have no common components.
\end{lemma}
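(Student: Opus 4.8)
The plan is to derive both conclusions from a single mechanism: whenever something goes wrong --- either $D_1$ fails to be $2$-connected, or $D_1$ and $D_2:=D-D_1$ share a component yet $D_1\not\subseteq D_2$ --- I will exhibit a strictly smaller effective subdivisor $D'\subsetneq D_1$ that still satisfies $D'(D-D')=1$, which contradicts the minimality of $D_1$. (Note $D_2>0$, since $D_1(D-D_1)=1\neq 0$.) The only tool available is the $1$-connectedness of $D$: because the ambient surface is arbitrary, no positivity such as Zariski's Lemma can be used, so every estimate must be squeezed out of $1$-connectedness together with the minimality of $D_1$.

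For the $2$-connectedness of $D_1$, I would argue by contradiction, supposing a splitting $D_1=A+B$ with $A,B>0$ and $A\cdot B\le 1$. Applying $1$-connectedness of $D$ to the two decompositions $D=A+(B+D_2)$ and $D=B+(A+D_2)$ gives $A\cdot B+A\cdot D_2\ge 1$ and $A\cdot B+B\cdot D_2\ge 1$. Adding these and using $(A+B)\cdot D_2=D_1\cdot D_2=1$ forces $2(A\cdot B)\ge 1$, hence $A\cdot B=1$. Substituting back shows $A\cdot D_2\ge 0$, $B\cdot D_2\ge 0$, and $A\cdot D_2+B\cdot D_2=1$, so one of the two vanishes; say $A\cdot D_2=0$ (the other case is symmetric). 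Then $A\cdot(D-A)=A\cdot(B+D_2)=A\cdot B+A\cdot D_2=1$ with $0<A\subsetneq D_1$, contradicting minimality. Hence $D_1$ is $2$-connected.

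For the dichotomy, I would write $\Delta:=\min(D_1,D_2)$ for the componentwise minimum and set $R_1:=D_1-\Delta$, $R_2:=D_2-\Delta$; by construction $R_1$ and $R_2$ have disjoint support, so $R_1\cdot R_2\ge 0$. A common component of $D_1$ and $D_2$ is exactly the condition $\Delta>0$, and $D_1\not\subseteq D_2$ is exactly $R_1>0$; I assume both and seek a contradiction. Since $D-\Delta=\Delta+R_1+R_2$, a direct expansion yields
\[
\Delta\cdot(D-\Delta)=D_1\cdot D_2-R_1\cdot R_2=1-R_1\cdot R_2\le 1 .
\]
On the other hand $\Delta>0$ and $D-\Delta>0$, so $1$-connectedness gives $\Delta\cdot(D-\Delta)\ge 1$; therefore $\Delta\cdot(D-\Delta)=1$. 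Since $R_1>0$ we have $0<\Delta\subsetneq D_1$, so $\Delta$ is a strictly smaller subdivisor with the defining property, once more contradicting minimality. Thus a common component forces $D_1\subseteq D_2$, which is case (i); in the remaining situation $D_1$ and $D_2$ have no common component, which is case (ii).

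The main obstacle here is organizational rather than conceptual: one must introduce the componentwise minimum $\Delta$ so that $R_1$ and $R_2$ have disjoint support (which is precisely what yields $R_1\cdot R_2\ge 0$), and then verify the identity $\Delta\cdot(D-\Delta)=D_1\cdot D_2-R_1\cdot R_2$. Once these are in place, the sandwich between the upper bound $\le 1$ (from $R_1\cdot R_2\ge 0$) and the lower bound $\ge 1$ (from $1$-connectedness) pins the intersection number to exactly $1$, and minimality does the rest. The existence of a minimal such $D_1$ is presupposed by the statement and needs no separate argument, since the effective subdivisors of $D$ form a finite poset.
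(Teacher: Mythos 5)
Your proof is correct in all steps (the summation trick forcing $A\cdot B=1$ and then $A\cdot D_2=0$ for the $2$-connectedness, and the identity $\Delta\cdot(D-\Delta)=D_1\cdot D_2-R_1\cdot R_2$ with $R_1\cdot R_2\ge 0$ for the dichotomy, each contradicting minimality). The paper itself gives no proof --- it quotes this lemma from Mendes Lopes's thesis \cite{ML89} (Chap.~I, Lemmas 2.2 and 2.3) --- and your argument is essentially the standard one found there, so nothing further is needed.
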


\begin{lemma}[\cite{ML89} Chap. I, Proposition 7.2]
	\label{vanishing}
	Let $D$ be a 2-connected divisor  with $p_a(D)=1$ on a smooth algebraic surface, and let $\mathcal{L}$ be an invertible  sheaf on $D$ such that $\deg\mathcal{L}|_C\geq 0$ for each component $C$ of $D$. If $\deg\mathcal{L}|_D=1$, then $\mathcal{L}\cong \mathcal{O}_D(x)$ with $x$ a smooth  point of $D$ and $H^0(\mathcal{L})$ is generated by one section vanishing only at $x$.
\end{lemma}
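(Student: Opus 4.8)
The plan is to argue by contraposition: assuming $F$ is not $2$-connected, I will produce a base point of $|K_F|$ and thereby contradict the hypothesis. Since $f$ is a fibration, $F$ is connected, hence $1$-connected, so the failure of $2$-connectedness yields an effective decomposition $F=D_1+D_2$ with $D_1,D_2>0$ and $D_1D_2=1$. By the characterization of base points in Lemma \ref{base point}, it suffices to exhibit a sub-decomposition $F=Y+Z$ with $YZ=1$ and a smooth point $x\in Y$ such that $\mathcal{O}_Y(x)\cong\mathcal{O}_Y(Z)$; the mechanism that produces such an $x$ will be Lemma \ref{vanishing}, applied to a suitable $2$-connected, arithmetic-genus-$1$ piece $Y$.

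First I would pin down the numerics forced on any such decomposition. From $F^2=0$ I get $D_1^2+D_2^2=-2$, while Zariski's Lemma \ref{Zariski' lemma} gives $D_i^2\le 0$, with equality only for rational multiples of $F$; since $D_1D_2=1\neq 0$ neither $D_i$ is such a multiple, so $D_1^2=D_2^2=-1$. Relative minimality forces $K_S\cdot C\ge 0$ on every fibre component $C$, hence $K_SD_i\ge 0$; the integrality of $p_a(D_i)$ makes $K_SD_i$ odd, so $K_SD_i\ge 1$, and from $K_SF=2p_a(F)-2=4$ I conclude $\{K_SD_1,K_SD_2\}=\{1,3\}$ and therefore $\{p_a(D_1),p_a(D_2)\}=\{1,2\}$. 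In particular every decomposition of $F$ into pieces meeting in $1$ point has exactly one piece of arithmetic genus $1$.

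Next I would extract the piece $Y$ to feed into Lemma \ref{vanishing}. Among the effective divisors $D$ with $0<D<F$, $D(F-D)=1$ and $p_a(D)=1$ (nonempty by the previous step), I choose one, $Y$, that is minimal for inclusion. A short calculation shows $Y$ is $2$-connected: a splitting $Y=A+B$ with $AB=0$ would force $A^2+B^2=Y^2=-1$ with both squares $\le -1$, which is impossible, so $Y$ is connected; and a splitting with $AB=1$ would, on its genus-$1$ summand (the one with self-intersection $-1$, the other being a $(-2)$-curve by the same numerics as above), produce a strictly smaller divisor still lying in the chosen family, contradicting minimality. Setting $Z:=F-Y$, one has $ZY=1$ and $\deg\mathcal{O}_Y(Z)|_Y=1$, and it remains only to verify the hypothesis $\deg\mathcal{O}_Y(Z)|_C\ge 0$ for every component $C$ of $Y$, i.e. $Y\cdot C\le 0$. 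Granting this, Lemma \ref{vanishing} gives $\mathcal{O}_Y(Z)\cong\mathcal{O}_Y(x)$ with $x$ a smooth point of $Y$, and then Lemma \ref{base point} identifies $x$ as a base point of $|K_F|$, the desired contradiction.

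The main obstacle is exactly this last verification, $Y\cdot C\le 0$ for all components $C$ of $Y$ (equivalently, controlling the components that $Y$ and $Z$ share): this is where I expect the real work to lie, since $Y\cdot C\le 0$ can fail only along a common component of $Y$ and $Z$. My intended route is to arrange, via the dichotomy of Lemma \ref{elliptic cycle} applied to the minimal cycle inside $F$, that $Y$ and $Z$ have no common component, in which case $Z\cdot C\ge 0$ is automatic, and to exclude the alternative $Y\subset Z$ using the constraints $Y^2=-1$, $K_SY=1$ together with $K_S\cdot C\ge 0$ from relative minimality, which confine $Y$ to the shape $C_0+\sum_i a_i\Gamma_i$ with $C_0$ a genus-$1$ component of self-intersection $-1$ and the $\Gamma_i$ being $(-2)$-curves. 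Reconciling the minimality of $Y$ with this structural description, so as to rule out any component $C$ with $Y\cdot C>0$, is the step I expect to demand the most care.
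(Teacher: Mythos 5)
Your proposal does not prove the statement it was assigned. The statement is Lemma \ref{vanishing} (quoted by the paper from \cite{ML89}, Chap.~I, Proposition 7.2): an intrinsic assertion about an abstract 2-connected divisor $D$ with $p_a(D)=1$ on a smooth surface and an invertible sheaf $\mathcal{L}$ on $D$ of total degree $1$ that is nonnegative on every component, with conclusion $\mathcal{L}\cong\mathcal{O}_D(x)$ for a smooth point $x$ and $H^0(\mathcal{L})$ generated by one section vanishing only at $x$. What you wrote is instead a proof sketch of Proposition \ref{2-connected} of the paper (base-point-freeness of $|K_F|$ implies 2-connectedness of the fibre for a relatively minimal genus $3$ fibration), and --- fatally for the assignment --- it invokes Lemma \ref{vanishing} itself as ``the mechanism that produces such an $x$''. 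As a proof of the assigned lemma this is circular: you assume exactly what is to be shown. Moreover, nothing in your text engages with the actual content of the lemma; fibrations, canonical systems, Zariski's Lemma and relative minimality do not occur in its hypotheses at all. (Your sketch does track the paper's own proof of Proposition \ref{2-connected} --- the minimal subdivisor with $D(F-D)=1$ via Lemma \ref{elliptic cycle}, the numerics $D^2=-1$, $K_SD=1$, and Lemma \ref{base point} to manufacture the base point --- but that proposition is a \emph{consumer} of Lemma \ref{vanishing}, not its proof. Note also that the paper itself offers no proof, citing \cite{ML89}.)

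A correct argument must work on the curve $D$ alone, along the following lines. Since $D$ is 2-connected, hence 1-connected, $h^0(\mathcal{O}_D)=1$, so $\chi(\mathcal{O}_D)=1-p_a(D)=0$ and Riemann--Roch on $D$ gives $\chi(\mathcal{L})=\deg\mathcal{L}|_D=1$; in particular $\mathcal{L}$ admits a nonzero section $s$. If $s$ vanished identically on some subcurve, let $Y$ be the maximal such and $Z:=D-Y>0$; then $s|_Z$ is a nonzero section of $\mathcal{L}|_Z\otimes\mathcal{O}_Z(-Y)$ vanishing on no component of $Z$, whence $\deg\mathcal{L}|_Z\geq YZ\geq 2$ by 2-connectedness, while $\deg\mathcal{L}|_Z\leq\deg\mathcal{L}|_D=1$ because $\deg\mathcal{L}|_Y\geq 0$ --- a contradiction. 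So $s$ vanishes on no component, its zero scheme is finite of length $\deg\mathcal{L}|_D=1$, i.e.\ a single reduced point $x$ whose maximal ideal is locally generated by $s$; this forces $\mathcal{O}_{D,x}$ to be regular, so $x$ is a smooth point of $D$ and $s$ induces $\mathcal{L}\cong\mathcal{O}_D(x)$. Finally, since $D$ is 2-connected with $p_a(D)=1$, the unique (up to scalar) section of $\omega_D$ is nowhere vanishing --- by \cite{CF93} (the paper's Lemma \ref{base point}), $|\omega_D|$ has no base point, and $\deg\omega_D|_D=0$ with $h^0(\omega_D)=p_a(D)=1$ --- so $\omega_D\cong\mathcal{O}_D$; duality then gives $h^1(\mathcal{L})=h^0(\omega_D\otimes\mathcal{L}^{-1})=h^0(\mathcal{O}_D(-x))=0$, hence $h^0(\mathcal{L})=\chi(\mathcal{L})=1$ and $H^0(\mathcal{L})$ is generated by the single section $s$, vanishing only at $x$. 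None of these steps --- Riemann--Roch on $D$, the maximal-subcurve degree count, the regularity of $\mathcal{O}_{D,x}$, the triviality of $\omega_D$ --- appears in your proposal.
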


Now we prove Proposition \ref{2-connected}.
\begin{proof}[Proof of proposition \ref{2-connected}]
	
	If $F$ is not 2-connected, then either (i) $F$ is not 1-connected, or  (ii) $F$ is 1-connected, but not 2-connected. We discuss the two cases separately.

	(i) If $F$ is not 1-connected, then $F$ must be a multiple fibre, i.e. $F=mF'$ with $F'$ 1-connected.  Since $K_SF=mK_SF'=4$ and $K_SF'$ is even,  we see   $m=2$. Thus we have $K_SF'=2$, $F'^2=0$  and $p_a(F')=2$. Since $\mathcal{O}_{F'}(F')$ is a nontrivial 2-torsion line bundle on $F'$, by \cite{BHPV} Chapter II Lemma 12.2, $h^1(\omega_{F'}(F'))= h^0(\mathcal{O}_{F'}(-F'))=0$. Since $\chi(\omega_{F'}(F'))=\chi(\mathcal{O}_S(K_S+F))-\chi(\mathcal{O}_S(K_S+F'))=\frac{K_SF'}{2}=1$, we know $h^0(\omega_F|_{F'})=h^0(\omega_{F'}(F'))=1$. Hence    $|\omega_F|_{F'}|$ has a base point  and so does $|K_F|$, a contradiction.
	
	(ii)  Assume that $F$ is 1-connected, but not 2-connected.
	Let $D\subset F$ realize a minimum of $K_SD$ among the subdivisors such that   $D(F-D)=1$. Let  $E:=F-D$. By Zariski's Lemma, we have  $D^2=E^2=-1$. By Lemma \ref{elliptic cycle}, $D$ is 2-connected and either
	
	(1) $D\subset E$   or
	
	(2) $D$ and $E$ have no common components.

	In case (2), since $DE=1$, $D$ intersects $E$ transversely in  one point $x$, which must be a smooth point of both curves.  Note that  $\mathcal{O}_D(x)\cong \mathcal{O}_D(E)$.  By Lemma \ref{base point},  $x$ is a base point of $|K_F|$, a contradiction.

	We study now case (1), i.e. $D\subset E$. Since $D^2=D(F-E)=-1$ and $K_S(D+E)=4$, we have  $K_SD=1$ and $K_SE=3$. In particular, we have  $p_a(D)=1$.  If $D$ is irreducible, we can always find a smooth point $x$ on $D$ such that $\mathcal{O}_D(x)\cong \mathcal{O}_D(E)$ (cf. \cite{Hart77} Chap. IV, Ex. 1.9). By Lemma \ref{base point},   $x$ is a base point of $|K_F|$, a contradiction.

	If $D$ is reducible, since $K_S$ is nef and $K_SD=1$, there is a unique irreducible component $C_0$ of $D$ such that $K_SC_0=1$.
	Write $D-C_0=\sum_{i\geq 1} m_iC_i$ with $C_i$  distinct irreducible curves, then we have $K_SC_i=0$ for $i\geq 1$. Hence  $C_i$ $(i\geq 1)$ are $(-2)$-curves.
	Since $D$ is 2-connected,  $DC_i=(D-C_i)C_i+C_i^2\geq 0$. Since $-1=D^2=C_0D+\sum_{i\geq 1}m_iC_iD$, we have $-1\geq C_0D=C_0^2+C_0(D-C_0)\geq C_0^2+2$, thus $C_0^2\leq -3$. Since $C_0$ is irreducible and $K_SC_0=1$, we  get  $C_0^2=-3$ and $C_0$ is a  smooth rational curve. Thus we get  $C_0D=-1, C_iD=0$ $(i\geq 1)$, and consequently   $C_0E=1, C_iE=0$ $(i\geq 1)$.
	
	Now let $\mathcal{L}:= \mathcal{O}_D(E)$,  so that  $\deg\mathcal{L}|_C\geq 0$ for any component $C$ of $D$ and $\deg\mathcal{L}|_D=1$. By Lemma \ref{vanishing},  we have $\mathcal{L}\cong \mathcal{O}_D(x)$ with $x$ a smooth point of $D$.  Hence   $x$ is a base point of $|K_F|$ by Lemma \ref{base point}, a contradiction.
	
	Therefore  $F$ is 2-connected.
\end{proof}

\begin{remark}	
	The key point in the above proof for case (ii) is that we can find a 2-connected elliptic cycle (i.e. $K_SD=1, D^2=-1$) $D\subset F$   such that  $D(F-D)=1$ and $\mathcal{L}:=\mathcal{O}_D(F-D)$ satisfies the condition of Lemma 2.6. Using  a similar argument, one can  get an analogous result for genus 2 fibrations, i.e.
	
	Let  $F$ be any  fibre of a relatively  minimal genus 2  fibration  $f:S\rightarrow B$. If $|K_F|$ is base point free, then $F$ is 2-connected.
\end{remark}

Combining  Corollary \ref{KF bpf} and  Proposition \ref{2-connected}, we get the following
\begin{theorem}
	\label{2-connected theorem}
	Let $S$ be a minimal surface with $p_g=q=1, K^2=4$ and a genus 3  Albanese fibration.  Then every Albanese fibre of $S$ is 2-connected.
\end{theorem}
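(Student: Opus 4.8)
The plan is to assemble the statement directly from the two results just established, treating the theorem as a straightforward specialization of Proposition~\ref{2-connected} to the Albanese fibration. The only genuine verification is that the Albanese map $f\colon S\to B$ meets the hypotheses of that proposition, after which the conclusion is immediate from Corollary~\ref{KF bpf}.

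First I would check that $f$ is a relatively minimal genus $3$ fibration. The general fibre has genus $g=3$ by assumption, so the relevant genus condition is in force. For relative minimality I would invoke the hypothesis that $S$ is a \emph{minimal} surface of general type: such a surface contains no $(-1)$-curves at all, hence no fibre of $f$ can contain one, so no fibre can be partially contracted and $f$ is relatively minimal. This places us exactly in the setting of Proposition~\ref{2-connected}.

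Next, fix an arbitrary fibre $F$ of $f$. By Corollary~\ref{KF bpf}, $|K_F|$ is base point free for every fibre, in particular for this $F$. Feeding this into Proposition~\ref{2-connected} yields that $F$ is $2$-connected. Since $F$ was arbitrary, every Albanese fibre of $S$ is $2$-connected, which is the assertion.

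I do not expect any obstacle in this final combination: all the substance has already been absorbed into the earlier results, and the deduction is formal. The real difficulty lies upstream---in establishing base point freeness of $|K_S+dF|$ (and hence of $|K_F|$ via the surjectivity of $H^0(S,K_S+dF)\to H^0(F,K_F)$) through the analysis of the relative canonical map $w$ in Lemma~\ref{bpf}, and in the elliptic-cycle argument of case~(1) of Proposition~\ref{2-connected}. Granting those, the present theorem is a corollary whose only delicate point is the remark on relative minimality recorded above.
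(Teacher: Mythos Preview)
Your proposal is correct and matches the paper's approach exactly: the paper simply states that the theorem follows by combining Corollary~\ref{KF bpf} with Proposition~\ref{2-connected}, and your only addition is the (routine) observation that minimality of $S$ forces relative minimality of $f$.
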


	\section{Murakami's structure theorem for genus 3  hyperelliptic fibrations}
	In this section, we use Murakami's   structure theorem for genus 3  hyperelliptic fibrations to study  surfaces of type $I$. First we recall briefly the notation and  idea in Murakami's structure theorem (see \cite{Mu12} for details).
	
\subsection{Murakami's structure theorem}	
We first introduce the admissible 5-tuple $(B, V_1, V_2^+, \sigma, \delta)$ in Murakami's structure theorem and then explain the structure  theorem.

The 5-tuple $(B, V_1, V_2^+, \sigma, \delta)$ is defined as follows:

$B$: any  smooth   curve;

$V_1$:  any  locally free sheaf  of rank 3 over $B$ ;

$V_2^+$: any   locally free sheaf  of rank   5 over $B$;

 $\sigma$ :   any  surjective morphism $S^2(V_1)\rightarrow V_2^+$;

$\delta$: any  morphism  $(V_2^-)^{\otimes 2}\rightarrow \mathcal{A}_4$. Here $V_2^-$ and $\mathcal{A}_4$ are defined as follows:
letting  $L:=\ker\sigma$,  which gives an exact sequence
$$0\rightarrow L\rightarrow S^2(V_1)\stackrel{\sigma}\rightarrow V_2^+\rightarrow 0.$$
We set  $V_2^-:=(\det V_1)\otimes L^{-1}$  and  define  $\mathcal{A}_n$  as  the cokernel of the injective morphism $L\otimes S^{n-2}(V_1)\rightarrow S^n(V_1)$ induced by  the inclusion $L\rightarrow S^2(V_1)$.

\vspace{3ex}
Set now  $\mathcal{A}:=\bigoplus _{n=0}^\infty \mathcal{A}_n$  and let $\mathcal{S}(V_1)$ be the symmetric $\mathcal{O}_B$-algebra of $V_1$.   Via the natural  surjection $\mathcal{S}(V_1)\rightarrow \mathcal{A}$, the algebra structure of $\mathcal{S}(V_1)$ induces a  graded $\mathcal{O}_B$-algebra structure on $\mathcal{A}$.  Let $\mathcal{C}:=\mathbf{Proj}(\mathcal{A})$,
$\mathcal{R}:=\mathcal{A}\oplus (\mathcal{A}[-2]\otimes V_2^-)$ and $X:=\mathbf{Proj}(\mathcal{R})$.

The 5-tuple $(B, V_1, V_2^+, \sigma, \delta)$ is said to be admissible if:

(i) $\mathcal{C}$ has at most RDP's as singularities;

(ii) $X$ has at most  RDP's as singularities.

  \begin{theorem}[Murakami's structure theorem, cf. \cite{Mu12} Theorem 1]
 The isomorphism classes of relatively  minimal  genus 3 hyperelliptic fibrations with all fibres 2-connected are in one to one correspondence with the isomorphism classes of admissible 5-tuples
 $(B, V_1, V_2^+, \sigma, \delta)$.
\end{theorem}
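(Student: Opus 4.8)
The plan is to reconstruct the fibration from its relative canonical algebra and to match the resulting graded data with the admissible $5$-tuple, following the method Catanese--Pignatelli \cite{CP06} developed for genus $2$ fibrations, adapted to the hyperelliptic genus $3$ situation. I would build the two directions of the correspondence separately and then check that they are mutually inverse and compatible with isomorphisms.

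\textbf{From a fibration to a $5$-tuple.} Given a relatively minimal genus $3$ hyperelliptic fibration $f\colon S\to B$ with all fibres $2$-connected, I would set $V_1:=f_*\omega_{S/B}$, locally free of rank $3$. The $2$-connectedness of every fibre is exactly what guarantees, via Corollary \ref{KF bpf} and Proposition \ref{2-connected} (Catanese--Franciosi \cite{CF93}), that $|K_F|$ is base point free on each fibre; hence the relative canonical map $w\colon S\dashrightarrow \mathbb{P}(V_1)$ is a morphism, and since the fibres are hyperelliptic it factors as a degree $2$ map onto a conic bundle $\mathcal{C}\subset \mathbb{P}(V_1)$. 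Letting the hyperelliptic involution split $V_2:=f_*\omega_{S/B}^{\otimes 2}=V_2^+\oplus V_2^-$, I would take $V_2^+$ to be the invariant part, equal to the image of the multiplication $S^2(V_1)\to V_2$, and define $\sigma$ as this surjection. A fibrewise Riemann--Roch count ($h^0(2K_F)=6$, with a one-dimensional conic relation) shows $\rank V_2^+=5$ and $\rank V_2^-=1$, so $L=\ker\sigma$ is a line bundle and $\mathcal{C}=\mathbf{Proj}(\mathcal{A})$ is the conic bundle cut out by $L$. Finally $\delta\colon (V_2^-)^{\otimes 2}\to \mathcal{A}_4$ is read off from the multiplication $V_2^-\otimes V_2^-\to V_4$ in the relative canonical algebra, i.e. from the relation satisfied by the square of the degree-$2$ anti-invariant generator. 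Admissibility (i) and (ii) hold because $\mathcal{C}$ and $X=\mathbf{Proj}(\mathcal{R})$ are, respectively, the conic bundle image and the relative canonical model of $S$, whose singularities are RDP's.

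\textbf{From a $5$-tuple to a fibration.} Conversely, from an admissible $5$-tuple I would build $\mathcal{A}$ as the quotient of the symmetric algebra $\mathcal{S}(V_1)$ by the ideal generated by $L\hookrightarrow S^2(V_1)$, yielding the conic bundle $\mathcal{C}=\mathbf{Proj}(\mathcal{A})$, and then equip $\mathcal{R}=\mathcal{A}\oplus(\mathcal{A}[-2]\otimes V_2^-)$ with the commutative graded $\mathcal{O}_B$-algebra structure in which the square of the degree-$2$ generator is prescribed by $\delta$. By admissibility, $X=\mathbf{Proj}(\mathcal{R})\to\mathcal{C}$ is a double cover with at most RDP's, so its minimal resolution $S$ is smooth and the composition $S\to X\to B$ is the sought fibration; one then checks that it is relatively minimal, of hyperelliptic genus $3$, with $2$-connected fibres, and that $f_*\omega_{S/B}^{\otimes n}$ recovers the given graded pieces.

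\textbf{Mutual inverseness and the main obstacle.} To conclude I would verify that the two constructions are inverse on isomorphism classes: starting from $f$, forming the $5$-tuple, and re-running the reconstruction returns a fibration isomorphic to $f$ over $B$, and symmetrically for $5$-tuples, both constructions respecting isomorphisms. The main obstacle is the global structure of the relative canonical algebra $\mathcal{R}=\bigoplus_n f_*\omega_{S/B}^{\otimes n}$: one must show it is generated in degrees $\leq 2$ with the single relation coming from $\delta$, that the splitting $V_2=V_2^+\oplus V_2^-$ with $V_2^+=\mathrm{im}\,(S^2(V_1))$ holds as an equality of subsheaves and not merely fibrewise, and that the relevant higher direct images vanish so that $V_1$ and $V_2^\pm$ are locally free of the stated ranks. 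This requires careful use of relative duality and cohomology-and-base-change, and it is precisely here that the $2$-connectedness hypothesis is indispensable, since it forces base point freeness of $|K_F|$ and hence the conic bundle description that underlies the entire argument.
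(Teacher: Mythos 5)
Your proposal is correct and takes essentially the same route as the source: the paper does not prove this theorem but cites Murakami \cite{Mu12}, and the two-directional construction it recalls (the eigensheaf splitting $V_2=V_2^+\oplus V_2^-$ under the hyperelliptic involution, $\sigma$ from the multiplication $S^2(V_1)\to V_2^+$ with line-bundle kernel $L$, $\delta$ from the square of the anti-invariant degree-$2$ generator, and conversely $\mathcal{C}=\mathbf{Proj}(\mathcal{A})$, $X=\mathbf{Proj}(\mathcal{R})$ followed by minimal resolution) is exactly the outline you give, in the Catanese--Pignatelli relative-canonical-algebra style. The one step to elaborate in a full write-up is admissibility (i): that $\mathcal{C}$ has at most RDP's is not automatic from being the relative canonical image but is where $2$-connectedness enters to exclude rank-one (double-line) conic fibres, a point your final paragraph correctly anticipates.
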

More precisely (cf. \cite{Mu12} Propositions 1 and  2), given a relatively  minimal genus 3 hyperelliptic  fibration $f:S\rightarrow B$  with all fibres 2-connected and setting  $V_n:=f_*\omega_{S/B}^{\otimes n}$,  we can define its associated    5-tuple
$(B, V_1, V_2^+, \sigma, \delta)$ as follows:

$B$  is the base curve;

$V_1=f_*\omega_{S/B}$;

$V_2^-$: the hyperelliptic fibration $f$ induces an involution of $S$, which acts on $V_2=f_*\omega_{S/B}^{\otimes 2}$. We define  $V_2^+$ and $V_2^-$  to be  the natural decomposition of $V_2$ into eigensheaves $V_2=V_2^+\oplus V_2^-$ with  eigenvalues $+1$ and $-1$ respectively;

$\sigma: S^2V_1\rightarrow V_2^+$ is the natural morphism induced by the multiplication structure of the relative canonical algebra $\mathcal{R}=\bigoplus_{n=1}^\infty V_n$ of $f$;

$\delta: (V_2^-)^{\otimes 2}\rightarrow V_4^+$ is the natural morphism induced by the multiplication of $\mathcal{R}$.

Moreover, the  associated 5-tuple is admissible.

\vspace{3ex}
Conversely, given an admissible 5-tuple $(B, V_1, V_2^+, \sigma, \delta)$, we have the  graded $\mathcal{O}_B$-algebras $\mathcal{S}(V_1), \mathcal{A}$, $\mathcal{R}$  and varieties $\mathcal{C}, X$. Note that $\mathcal{C}\in |\mathcal{O}_{\mathbb{P}(V_1)}(2)\otimes \pi^*L^{-1}|$ is a conic bundle determined by $\sigma$, and $X$ is the  double cover of  $\mathcal{C}$ with branch divisor determined by $\delta\in Hom_{\mathcal{O}_B}((V_2^-)^{\otimes 2}, \mathcal{A}_4)\cong H^0(\mathcal{C}, \mathcal{O}_{\mathcal{C}}(4)\otimes (\pi|_{\mathcal{C}})^*(V^-_2)^{-2})$, where $\pi: \mathbb{P}(V_1)=\mathbf{Proj}(\mathcal{S}(V_1))\rightarrow B$ is the natural projection.
Let  $\bar{f}: X\rightarrow B$ be the natural projection and  $v: S\rightarrow X$ be the minimal resolution of $X$. Then   $f:=v\circ \bar{f}: S\rightarrow B$ is  a relatively  minimal genus 3 hyperelliptic  fibration with all fibres 2-connected.	Moreover, we have $f_*\omega_{S/B}=V_1$ and   $S$ has  the following numerical invariants:
$$\chi(\mathcal{O}_S)=\deg V_1+2(b-1),$$
$$K_S^2=4\deg V_1-2\deg L+16(b-1),$$
where  $b$ is the genus of $B$.

\subsection{Application to surfaces of type $I$}
	Let $S$ be a  surface of type $I$, i.e. a minimal surface with $p_g=q=1,K^2=4$ and hyperelliptic Albanese fibration $f:S\rightarrow B$ such that $g=3, \iota=2$ (note that  ``$V_1$ decomposable" means $\iota=2$  by Lemma \ref{BZ00 iota}). By Theorem \ref{2-connected theorem},  every fibre of $f$ is 2-connected.  Hence we can use Murakami's structure theorem to study $f$.
	
	For later convenience we fix a group structure on $B$,  denote by $0$ its neutral element and by $\eta_1,\eta_2,\eta_3$ the three  nontrivial 2-torsion points.
	
	By Lemma \ref{diagram},  we can assume  $V_1=E_{[0]}(2,1)\oplus N$,
	where   $N$ is  a nontrivial torsion line bundle over $B$.   Now we  use Murakami's structure theorem to study the  order of $N$.
	In the notation we introduced in section 2.4, we have:

	\begin{lemma}
		\label{LN}
		$L\cong N^{\otimes 2}.$
	\end{lemma}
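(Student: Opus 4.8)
The plan is to pin down $L$ as a sub-line-bundle of $S^2(V_1)$ by projecting it onto the unique degree-$0$ summand $N^{\otimes 2}$; since both $L$ and $N^{\otimes 2}$ will turn out to have degree $0$, it suffices to show that this projection does not vanish identically.

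First I would collect the numerical data. Taking $b:=g(B)=1$ in Murakami's formulae $\chi(\mathcal{O}_S)=\deg V_1+2(b-1)$ and $K_S^2=4\deg V_1-2\deg L+16(b-1)$, the values $\chi(\mathcal{O}_S)=1$, $K_S^2=4$ force $\deg V_1=1$ (as expected from $V_1=E_{[0]}(2,1)\oplus N$) and, more to the point, $\deg L=0$. Likewise $\det V_1=\det E_{[0]}(2,1)\otimes N=\mathcal{O}_B(0)\otimes N$. Now recall from the proof of Lemma \ref{bpf} that
$$S^2(V_1)=\mathcal{O}_B(\eta_1)\oplus\mathcal{O}_B(\eta_2)\oplus\mathcal{O}_B(\eta_3)\oplus\bigl(E_{[0]}(2,1)\otimes N\bigr)\oplus N^{\otimes 2},$$
so $N^{\otimes 2}$ is the only summand of degree $0$ (the three $\mathcal{O}_B(\eta_i)$ and the indecomposable $E_{[0]}(2,1)\otimes N$ all have positive degree). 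Write $p\colon L\hookrightarrow S^2(V_1)\twoheadrightarrow N^{\otimes 2}$ for the composite of the inclusion $L=\ker\sigma$ with the projection to this summand. A nonzero homomorphism between line bundles of the same degree $0$ vanishes nowhere, hence is an isomorphism; so the lemma will follow once I show $p\neq 0$.

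To interpret $p$ geometrically I would use the conic bundle $\mathcal{C}\in|\mathcal{O}_{\mathbb{P}(V_1)}(2)\otimes\pi^*L^{-1}|$ cut out by $L\hookrightarrow S^2(V_1)$, together with the diagram of Lemma \ref{diagram}: since $\varphi$ is induced by $W_1\hookrightarrow V_1$ and $V_1=W_1\oplus N$, the map $\varphi\colon\mathbb{P}(V_1)\dashrightarrow\mathbb{P}(W_1)=B^{(2)}$ is the projection away from the section $\mathbb{P}(N)\subset\mathbb{P}(V_1)$. In the fibre $\mathbb{P}^2$ over a general point $b\in B$, choosing coordinates adapted to $V_{1,b}=W_{1,b}\oplus N_b$, the value $p(b)$ is precisely the coefficient of the ``$N_b^{\otimes 2}$'' monomial in the conic equation, i.e. the value of that equation at the centre of projection $[N_b]$. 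Thus $p\neq 0$ if and only if $[N_b]\notin\mathcal{C}_b$ for general $b$, i.e. $\mathbb{P}(N)\not\subset\mathcal{C}$.

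The heart of the matter is therefore to rule out $\mathbb{P}(N)\subset\mathcal{C}$, and this is where I expect the real work. Suppose $\mathbb{P}(N)\subset\mathcal{C}=w(S)$. Since $w\colon S\to\mathcal{C}$ is a surjective morphism (Lemma \ref{bpf}) with $w^*\mathcal{O}_{\mathbb{P}(V_1)}(T)=\mathcal{O}_S(K_S)$, the preimage $w^{-1}(\mathbb{P}(N))$ contains an irreducible curve $\Gamma$ dominating $B$. By the projection formula $K_S\cdot\Gamma=T\cdot w_*\Gamma=\deg(w|_\Gamma)\cdot\deg\bigl(\mathcal{O}(T)|_{\mathbb{P}(N)}\bigr)=\deg(w|_\Gamma)\cdot\deg N=0$. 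But $K_S$ is nef and big ($K_S^2=4$), so any irreducible curve $C$ with $K_S\cdot C=0$ satisfies $C^2=-2$ and $p_a(C)=0$ by Hodge index and adjunction, hence is a $(-2)$-curve contained in a fibre of $f$. This contradicts the fact that $\Gamma$ dominates the elliptic curve $B$. Therefore $\mathbb{P}(N)\not\subset\mathcal{C}$, so $p\neq 0$ and $L\cong N^{\otimes 2}$.
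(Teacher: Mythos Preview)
Your argument is correct, but it is genuinely different from the paper's. The paper proceeds cohomologically: it tensors the exact sequence $0\to L\to S^2(V_1)\to V_2^+\to 0$ by $N^{-2}$ and passes to $H^1$. Using $h^0(\omega_S^{\otimes 2}\otimes f^*N^{-2})=K_S^2+\chi(\mathcal{O}_S)=5$ (a standard nef--and--big vanishing) it computes $h^1(V_2^+\otimes N^{-2})=0$, while $h^1(S^2(V_1)\otimes N^{-2})\ge 1$ because of the summand $N^{\otimes 2}\otimes N^{-2}=\mathcal{O}_B$; the surjection $H^1(L\otimes N^{-2})\twoheadrightarrow H^1(S^2(V_1)\otimes N^{-2})$ then forces the degree-$0$ line bundle $L\otimes N^{-2}$ to be trivial.

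You instead attack the projection $p\colon L\to N^{\otimes 2}$ directly, identifying it with evaluation of the relative conic along the section of $\mathbb{P}(V_1)$ determined by the quotient $V_1\to N$, and then exclude $\mathbb{P}(N)\subset\mathcal{C}=w(S)$ by pulling back to $S$ and invoking the standard fact that on a minimal surface of general type every curve with $K_S\cdot C=0$ is a $(-2)$-curve, hence rational, hence cannot dominate the elliptic base. This is a clean geometric route that uses only Lemma~\ref{bpf} and the Murakami description of $\mathcal{C}$ already in place. A pleasant by-product is that you get the splitting of Lemma~\ref{split} for free: once $p$ is an isomorphism, the summand projection $S^2(V_1)\twoheadrightarrow N^{\otimes 2}\stackrel{p^{-1}}{\longrightarrow}L$ retracts the inclusion $L\hookrightarrow S^2(V_1)$. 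The paper, by contrast, extracts that splitting in a second step from the $H^1$-isomorphism established here.
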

	\begin{proof}  Since $\det V_1=N(0)$,  we have $V_2^-=(\det V_1)\otimes L^{-1}=N(0)\otimes L^{-1}.$  From section 2.4, we have $\rank L=\rank  S^2(V_1)-\rank V_2^+=1$ and $\deg L=\frac{1}{2}(4\deg V_1+16(b-1)-K_S^2)=0$, i.e.
		$L$ is a line bundle of degree $0$. Hence $V_2^-$ is a line bundle of degree $1$.
		
		Tensoring the   exact sequence
		$$0\rightarrow L\rightarrow S^2(V_1) \rightarrow V_2^+\rightarrow 0$$ with $N^{- 2}$, we get the associated cohomology long exact sequence
		$$H^1(L\otimes N^{-2}) \rightarrow H^1(S^2(V_1)\otimes N^{- 2}) \rightarrow H^1(V_2^+\otimes N^{- 2})\rightarrow 0.$$
		
		Since $h^0(V_2\otimes N^{-2})=h^0(\omega_S^{\otimes 2}\otimes f^*N^{-2})=5$ and $h^0(V_2^-\otimes N^{-2})=1$ (as $\deg (V_2^-\otimes N^{-2})$=1), we get
		$h^0(V_2^+\otimes N^{-2})=4$.  By Riemann-Roch for vector bundles over a smooth curve (cf. \cite{BHPV} Chap. II, Theorem 3.1),  we have  $h^1(V_2^+\otimes N^{-2})=h^0(V_2^+\otimes N^{-2})-\deg (V_2^+\otimes N^{-2})=0$ (note that $\deg (V_2^+\otimes N^{-2})=\deg (V_2^+)=\deg (V_2)-\deg (V_2^-)=4$). Since $h^1(S^2(V_1)\otimes N^{-2})\geq 1$,   we get $h^0(L\otimes N^{-2})\geq1$. Since  $\deg(L\otimes N^{-2})=0$, we deduce that $L\cong N^{\otimes 2}$ (cf. \cite{Ati57} Theorem 5).	
	\end{proof}
	
	\begin{lemma}	
		\label{split}
		The exact sequence $$0\rightarrow L\rightarrow S^2(V_1)=(\bigoplus _{i=1}^3\mathcal{O}_B(\eta_i))\oplus E_{[0]}(2,1)\otimes N \oplus L\rightarrow V_2^+\rightarrow 0$$
		splits.
	\end{lemma}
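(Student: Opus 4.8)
The plan is to reduce the splitting to the nonvanishing of a single scalar, and then to extract that nonvanishing from admissibility of the $5$-tuple. By Lemma \ref{LN} the last summand of $S^2(V_1)$ is $S^2(N)=N^{\otimes 2}\cong L$; set $A:=(\bigoplus_{i=1}^3\mathcal{O}_B(\eta_i))\oplus E_{[0]}(2,1)\otimes N$, so that $S^2(V_1)=A\oplus N^{\otimes 2}$ and the given inclusion $L\hookrightarrow S^2(V_1)$ has a well-defined component $c_0\colon L\to N^{\otimes 2}$ onto the last factor. First I would compute $\mathrm{Hom}(A,L)=0$ on the elliptic curve $B$: the summands of $\mathcal{H}om(A,L)$ are the degree $-1$ line bundles $N^{\otimes 2}(-\eta_i)$ and the rank $2$, degree $-1$ indecomposable bundle $E_{[0]}(2,1)^\vee\otimes N$, none of which has a nonzero section. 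Hence $\mathrm{Hom}(S^2(V_1),L)=\mathrm{Hom}(N^{\otimes 2},L)\cong\mathbb{C}$, so the only possible retraction of $L\hookrightarrow S^2(V_1)$ is a scalar multiple of the projection to $N^{\otimes 2}$; the sequence splits if and only if $c_0\neq 0$, i.e. if and only if $L=\ker\sigma$ is not contained in $A$.

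Next I would read $c_0$ on the conic bundle. The inclusion $L\hookrightarrow S^2(V_1)$ is exactly the section $q\in H^0\big(\mathbb{P}(V_1),\mathcal{O}_{\mathbb{P}(V_1)}(2)\otimes\pi^*L^{-1}\big)$ that cuts out $\mathcal{C}$. Let $\Sigma\subset\mathbb{P}(V_1)$ be the section of $\pi$ attached to the quotient $V_1\twoheadrightarrow N$; by Lemma \ref{diagram} it is precisely the centre of the projection $\varphi\colon\mathbb{P}(V_1)\dashrightarrow\mathbb{P}(W_1)=B^{(2)}$. Since $\mathcal{O}_{\mathbb{P}(V_1)}(1)|_\Sigma\cong N$ and $L\cong N^{\otimes 2}$, one has $\big(\mathcal{O}_{\mathbb{P}(V_1)}(2)\otimes\pi^*L^{-1}\big)|_\Sigma\cong\mathcal{O}_B$, so $q|_\Sigma$ is a constant, equal up to scalar to $c_0$. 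Therefore $c_0=0$ if and only if $\Sigma\subset\mathcal{C}$, and the whole statement comes down to excluding $\Sigma\subset\mathcal{C}$.

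The heart of the argument is to exclude $\Sigma\subset\mathcal{C}$ by admissibility. Assume $\Sigma\subset\mathcal{C}$. If the $E_{[0]}(2,1)\otimes N$-component of $q$ vanished, then $q$ would be a binary quadratic in the $E_{[0]}(2,1)$-coordinates and every fibre of $\mathcal{C}$ would break into two lines through the point of $\Sigma$, so $\mathcal{C}$ would be singular along $\Sigma$, contradicting admissibility condition (i); here one uses that the unique section of the stable bundle $E_{[0]}(2,1)\otimes N^{-1}$ is nowhere zero. Thus $\mathcal{C}$ is smooth along $\Sigma$, and with $N_{\Sigma/\mathbb{P}(V_1)}\cong E_{[0]}(2,1)^\vee\otimes N$ and $N_{\mathcal{C}/\mathbb{P}(V_1)}|_\Sigma\cong\mathcal{O}_B$ the normal bundle sequence $0\to N_{\Sigma/\mathcal{C}}\to N_{\Sigma/\mathbb{P}(V_1)}\to N_{\mathcal{C}/\mathbb{P}(V_1)}|_\Sigma\to 0$ gives $\Sigma^2_{\mathcal{C}}=\deg N_{\Sigma/\mathcal{C}}=-1$. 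On the other hand the branch divisor $R\in|\mathcal{O}_{\mathcal{C}}(4)\otimes(\pi|_{\mathcal{C}})^*(V_2^-)^{-2}|$ of the double cover $X\to\mathcal{C}$ satisfies $R\cdot\Sigma=\deg\big(N^{\otimes 4}\otimes(V_2^-)^{-2}\big)=-2$, using $\deg V_2^-=1$. Writing $R=m\Sigma+R'$ with $R'$ effective and $\Sigma\not\subset R'$, the relation $-2=-m+R'\cdot\Sigma$ forces $m\geq 2$, so $R$ contains $\Sigma$ with multiplicity at least $2$; then $X$ is non-normal along the preimage of $\Sigma$ and cannot have only rational double points, contradicting admissibility condition (ii). Hence $\Sigma\not\subset\mathcal{C}$, $c_0\neq 0$, and the sequence splits.

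The step I expect to be the real obstacle is this last one: everything up to the identification of $c_0$ with $q|_\Sigma$ is formal, whereas turning $\Sigma\subset\mathcal{C}$ into a failure of the RDP conditions genuinely uses both admissibility hypotheses together with the numerics $\Sigma^2_{\mathcal{C}}=-1$ and $R\cdot\Sigma=-2$, as well as the stability of $E_{[0]}(2,1)\otimes N^{-1}$. As a consistency check, $c_0\neq 0$ means $\Sigma\cap\mathcal{C}=\emptyset$, so $\varphi|_{\mathcal{C}}$ has degree $2$ and the paracanonical map $w'=\varphi\circ w$ has degree $4$, in agreement with the degree-$4$ (bidouble) description of these surfaces obtained later in the paper.
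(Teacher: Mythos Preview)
Your argument is correct, but it takes a substantially longer and more geometric route than the paper's. Both proofs reduce to the same scalar: writing $S^2(V_1)=A\oplus N^{\otimes 2}$ with $A=(\bigoplus_i\mathcal{O}_B(\eta_i))\oplus E_{[0]}(2,1)\otimes N$, one must show that the component $c_0\colon L\to N^{\otimes 2}$ of the inclusion is nonzero. You establish $c_0\neq 0$ by identifying $c_0$ with $q|_\Sigma$ and then eliminating $\Sigma\subset\mathcal{C}$ via the two admissibility conditions; the normal-bundle and intersection computations ($\Sigma^2_{\mathcal{C}}=-1$, $R\cdot\Sigma=-2$, hence $m\geq 2$) are all fine, and the appeal to stability of $E_{[0]}(2,1)\otimes N^{-1}$ to get smoothness of $\mathcal{C}$ along $\Sigma$ in the remaining case is the right ingredient.

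The paper, by contrast, never touches the geometry of $\mathcal{C}$ or the admissibility hypotheses. It simply recycles the cohomology computation already carried out in the proof of Lemma~\ref{LN}: there it was shown that the inclusion $L\hookrightarrow S^2(V_1)$ induces an isomorphism $H^1(L\otimes N^{-2})\cong H^1(S^2(V_1)\otimes N^{-2})$, both spaces being one-dimensional. Since $H^1(A\otimes N^{-2})=0$ (every summand has positive degree), the projection $S^2(V_1)\to N^{\otimes 2}=L$ also induces an isomorphism on $H^1(-\otimes N^{-2})$. Hence the composition $L\to S^2(V_1)\to L$ is nonzero on $H^1$, therefore nonzero, therefore an isomorphism --- and that is the splitting. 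This is a two-line argument with no case analysis. What your approach buys is the geometric statement $\Sigma\cap\mathcal{C}=\emptyset$, which indeed foreshadows the finiteness of $\varphi|_{\mathcal{C}}$ proved later in Lemma~\ref{finite}; what the paper's approach buys is brevity and independence from the RDP conditions.
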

	\begin{proof} From  the proof of Lemma \ref{LN}, we know that $L\rightarrow S^2(V_1)$ induces an isomorphism $H^1(L\otimes N^{-2})\cong H^1(S^2(V_1)\otimes N^{-2})(\neq 0)$.  Thus  the composition  map
		$$0\rightarrow L\rightarrow S^2(V_1)\rightarrow L $$ is nonzero (here the last map is the natural projection), hence  it is an isomorphism. Therefore the above  exact sequence splits.
	\end{proof}

	\begin{remark}
		\label{divisor restricion}
		As in \cite{CP06} Lemma  6.14 or \cite{Pig09} section 1.2, since the map $ L\otimes S^2(V_1)\rightarrow S^4(V_1)$ factors as
		$$L\otimes S^2(V_1)\rightarrow S^2(V_1) \otimes S^2(V_1)\rightarrow  S^4(V_1),$$
		Lemma \ref{split} implies that  the exact sequence $$0\rightarrow L\otimes S^2(V_1)\rightarrow S^4(V_1) \rightarrow \mathcal{A}_4 \rightarrow 0$$ also splits  (see \cite{Pig09} section 1.2 p.5 for details). Hence  the branch curve $\delta\in |\mathcal{O}_{\mathcal{C}}(4)\otimes (\pi|_{\mathcal{C}})^* (\det V_1\otimes L^{-1})^{-2}|$ comes from an effective divisor in $ |\mathcal{O}_{\mathbb{P}(V_1)}(4)\otimes \pi^*(\det V_1\otimes L^{-1})^{-2}|$.
	\end{remark}
	
	By  Lemmas \ref{LN},  \ref{split} and Remark \ref{divisor restricion},   we have
	$$\det V_1=\mathcal{O}_B(0)\otimes N, $$
	$$ S^2(V_1)=(\bigoplus_{i=1}^3\mathcal{O}_B(\eta_i))\oplus E_0(2,1)\otimes N\oplus L,$$
	$$ V_2^-=\det V_1\otimes L^{-1}=\mathcal{O}_B(0)\otimes N^{-1}.$$

	\begin{lemma}
		\label{order of torsion}
		$L^{\otimes 2}\cong N^{\otimes 4} \cong \mathcal{O}_B$.
	\end{lemma}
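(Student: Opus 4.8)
The plan is to reduce everything to showing $N^{\otimes 4}\cong\mathcal{O}_B$, since Lemma \ref{LN} already gives $L\cong N^{\otimes 2}$, and hence $L^{\otimes 2}\cong N^{\otimes 4}$, for free. To pin down the order of $N$ I would exploit the one piece of the admissible $5$-tuple not yet used, namely the branch datum $\delta\in\mathrm{Hom}_{\mathcal{O}_B}((V_2^-)^{\otimes 2},\mathcal{A}_4)$. Because $S$ is a genuine genus $3$ hyperelliptic fibration, the branch divisor of the double cover $X\to\mathcal{C}$ is nonzero (a general fibre is a double cover of its canonical conic branched in $8$ points), so $\delta\neq 0$; equivalently $H^0\big(\mathcal{A}_4\otimes((V_2^-)^{\otimes 2})^{-1}\big)\neq 0$. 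The whole argument is then the extraction of a torsion relation from this non-vanishing.

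First I would compute $\mathcal{A}_4$ explicitly. Using the splitting of $0\to L\otimes S^2(V_1)\to S^4(V_1)\to\mathcal{A}_4\to 0$ from Remark \ref{divisor restricion} together with $V_1=E_0(2,1)\oplus N$ and $L\cong N^{\otimes 2}$, the $N$-grading of $S^4(V_1)=\bigoplus_{j=0}^4 S^{4-j}(E_0(2,1))\otimes N^{\otimes j}$ identifies $L\otimes S^2(V_1)$ with the part of $N$-degree $\geq 2$, whence $\mathcal{A}_4\cong S^4(E_0(2,1))\oplus\big(S^3(E_0(2,1))\otimes N\big)$. Next I would decompose the two symmetric powers of $E:=E_0(2,1)$. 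From the decomposition already recorded in Lemma \ref{split}, $S^2E=\bigoplus_{i=1}^3\mathcal{O}_B(\eta_i)$, and since $\wedge^2E=\mathcal{O}_B(0)$ this is equivalent to the $2$-torsion invariance $E\otimes P\cong E$ for every $2$-torsion $P$. Feeding this into the rank $2$ plethysms $S^2S^2E=S^4E\oplus(\det E)^{\otimes 2}$ and $E\otimes S^2E=S^3E\oplus(E\otimes\det E)$ I would obtain
$$S^4E\cong\mathcal{O}_B(2\cdot 0)^{\oplus 2}\oplus\bigoplus_{i=1}^3\big(\mathcal{O}_B(2\cdot 0)\otimes P_i\big),\qquad S^3E\cong\big(E\otimes\mathcal{O}_B(0)\big)^{\oplus 2},$$
where $P_i=\mathcal{O}_B(\eta_i-0)$ are the nontrivial $2$-torsion line bundles; in particular the degree $2$ line subbundles of $\mathcal{A}_4$ are exactly the four bundles $\mathcal{O}_B(2\cdot 0)\otimes P$ ($P\in B[2]$), while $S^3E\otimes N$ contributes only rank $2$ summands.

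Now I would test the non-vanishing of $\delta$ against this decomposition. A nonzero $\delta$ has nonzero component into some indecomposable summand $M$ of $\mathcal{A}_4$, i.e. $H^0\big(M\otimes((V_2^-)^{\otimes 2})^{-1}\big)\neq 0$, where $(V_2^-)^{\otimes 2}=\mathcal{O}_B(2\cdot 0)\otimes N^{-2}$ is a line bundle of degree $2$. For each rank $2$ summand $E\otimes\mathcal{O}_B(0)\otimes N$ the twist $M\otimes((V_2^-)^{\otimes 2})^{-1}$ has rank $2$ and degree $-1$, hence no sections; so $M$ must be one of the degree $2$ line summands $\mathcal{O}_B(2\cdot 0)\otimes P$. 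A nonzero homomorphism between line bundles of the same degree is an isomorphism, so $\mathcal{O}_B(2\cdot 0)\otimes N^{-2}\cong\mathcal{O}_B(2\cdot 0)\otimes P$, i.e. $N^{\otimes 2}\cong P^{-1}\in B[2]$; squaring gives $N^{\otimes 4}\cong\mathcal{O}_B$, and with $L\cong N^{\otimes 2}$ also $L^{\otimes 2}\cong\mathcal{O}_B$, as claimed. As a bonus this is precisely the dichotomy used later: $N^{\otimes 2}\cong\mathcal{O}_B$ gives order $2$, whereas $N^{\otimes 2}\cong P_i\neq\mathcal{O}_B$ gives order $4$.

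I expect the main obstacle to be the exact determination of the $2$-torsion twists in $S^4E_0(2,1)$, since the conclusion hinges on the degree $2$ summands of $\mathcal{A}_4$ being precisely $\mathcal{O}_B(2\cdot 0)\otimes P$ for the four $P\in B[2]$: a wrong twist here would change the final torsion relation. The plethysm formulas reduce this to the single input $S^2E_0(2,1)=\bigoplus_i\mathcal{O}_B(\eta_i)$, so I would take care to justify that identity and the resulting $2$-torsion invariance $E_0(2,1)\otimes P\cong E_0(2,1)$ cleanly. The only other point needing attention is the justification that $\delta\neq 0$, which I would argue from the fact that a general Albanese fibre is a smooth genus $3$ hyperelliptic curve and therefore a genuinely branched double cover of its canonical conic.
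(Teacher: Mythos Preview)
Your proposal is correct and follows essentially the same approach as the paper: both extract the torsion relation from the non-vanishing of the branch datum $\delta$ combined with Atiyah's decomposition of the symmetric powers of $E_{[0]}(2,1)$, arriving at exactly the same list of degree~$2$ line summands $\mathcal{O}_B(2\cdot 0)\otimes P$ with $P\in B[2]$. The only cosmetic difference is that you work directly with $\mathcal{A}_4\cong S^4(E)\oplus S^3(E)\otimes N$, whereas the paper uses Remark~\ref{divisor restricion} to lift $\delta$ to $|\mathcal{O}_{\mathbb{P}(V_1)}(4)\otimes\pi^*(V_2^-)^{-2}|$ and computes $H^0(S^4(V_1)\otimes(V_2^-)^{-2})$ in full---the extra summands $S^2(E)\otimes N^2$, $E\otimes N^3$, $N^4$ that you discard all have negative degree after the twist anyway, so the two computations are identical where it matters.
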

	\begin{proof}
		Recall that $V_1=E_{[0]}(2,1)\oplus N$, where   $N$ is  a nontrivial torsion line bundle over $B$.
		By Atiyah (cf. \cite{Ati57}), we  have  $$S^4(V_1)=S^4(E_{[0]}(2,1))\oplus (S^3(E_{[0]}(2,1))\otimes N) \oplus (S^2(E_{[0]}(2,1)\otimes N^{\otimes 2}) \oplus  (E_{[0]}(2,1)\otimes N^{\otimes 3}) \oplus N^{\otimes 4},$$
		$$S^4(E_{[0]}(2,1))=\mathcal{O}_B(2\cdot 0)\oplus \mathcal{O}_B(2\cdot 0)\oplus\mathcal{O}_B(\eta_1+\eta_2)\oplus \mathcal{O}_B(\eta_2+\eta_3)\oplus \mathcal{O}_B(\eta_3+\eta_1),$$
		$$S^3(E_{[0]}(2,1))=E_{[0]}(2,1)(0)\oplus E_{[0]}(2,1)(0)$$
		$$S^2(E_{[0]}(2,1))=\mathcal{O}_B(\eta_1)\oplus \mathcal{O}_B(\eta_2)\oplus\mathcal{O}_B(\eta_3).$$
		
		By \cite{Ati57} Lemma 15, assuming that  $\mathcal{E}$ is an indecomposable  vector bundle of rank $r$ and degree $d$ over an elliptic curve $B$, then $h^0(\mathcal{E})=d$ if $d>0$; $h^0(\mathcal{E})=0$ or $1$ if $d=0$. Moreover by \cite{Ati57} Theorem 5, if $d=0$, then $h^0(\mathcal{E})=1$ if and only if $\det \mathcal{E}\cong \mathcal{O}_B$.
		
		Hence we have
		$$H^0(S^4(V_1) \otimes \mathcal{O}_B(-2\cdot 0)\otimes L)=H^0(S^4(E_{[0]}(2,1))(-2\cdot 0)\otimes L)=H^0((\mathcal{O}_B\oplus \mathcal{O}_B \oplus N_1\oplus N_2\oplus N_3 )\otimes L),
		$$
		where $N_i:= \mathcal{O}_B(\eta_i-0)$ $(i=1,2,3)$.
		
		If $L^{\otimes 2}\ncong \mathcal{O}_B$, then $H^0(S^4(V_1) \otimes \mathcal{O}_B(-2\cdot 0)\otimes L)=0$ and $|\mathcal{O}_{\mathbb{P}(V_1)}(4)\otimes \pi^*(V_2^-)^{-2}|=\emptyset$,   a contradiction.  Hence we have $L^{\otimes 2}\cong \mathcal{O}_B$ and the  result  follows from Lemma \ref{LN}.
		
	\end{proof}
	
	Using Remark \ref{divisor restricion} and Lemma \ref{order of torsion}, we can decide  now the linear systems of the conic bundle $\mathcal{C}$ and the branch divisor $\delta$ in Murakami's structure theorem (cf. section 2.4):
	
	$ \mathcal{C}\in|\mathcal{O}_{\mathbb{P}(V_1)}(2)\otimes \pi^*L^{-1}|\cong |\mathcal{O}_{\mathbb{P}(V_1)}(2)|\cong  \mathbb{P}(H^0(S^2(V_1))), $
	$\delta\in |\mathcal{O}_{\mathbb{P}(V_1)}(4)\otimes \pi^*(V_2^-)^{-2}||_{\mathcal{C}},$ where  $|\mathcal{O}_{\mathbb{P}(V_1)}(4)\otimes \pi^*(V_2^-)^{-2}|\cong |\mathcal{O}_{\mathbb{P}(V_1)}(4)\otimes \pi^*\mathcal{O}_B(-2\cdot 0)|\cong  \mathbb{P}(H^0(S^4(V_1)\otimes \mathcal{O}_B(-2\cdot 0)))$.

	\vspace{3ex}
	Now we  divide surfaces of type $I$ into two types according to the order of $N$.
	\begin{defn}
		\label{classification}
		Let $S$ be a surface of type $I$ and assume $V_1=E_{[0]}(2,1)\oplus N$ with $N$ a nontrivial torsion line bundle (cf. Lemma \ref{diagram}).  We  call  $S$ of type $I_1$ if $N^{\otimes 2}\cong \mathcal{O}_B$;  we call $S$ of type $I_2$ if $N^{\otimes 2}\ncong \mathcal{O}_B$ and $N^{\otimes 4}\cong \mathcal{O}_B$.
		
		Denote by $\mathcal{M}_{I_1}$ the  the subspace  of $\mathcal{M}_{1,1}^{4,3}$ corresponding to  surfaces of type $I_1$,   and by $\mathcal{M}_{I_2}$ the subspace of $\mathcal{M}_{1,1}^{4,3}$ corresponding to surfaces  of type $I_2$.
		Then we have  $\mathcal{M}_I=\mathcal{M}_{I_1}\cup \mathcal{M}_{I_2}$.
	\end{defn}

	\begin{pro}
		\label{disjoint}
		$\mathcal{M}_{I_1}$ and 	$\mathcal{M}_{I_2}$ are two disjoint     Zariski closed subsets of $\mathcal{M}^{4,3}_{1,1}$.
	\end{pro}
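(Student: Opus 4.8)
The plan is to isolate a discrete invariant of $S$ that separates the two types and then to prove that this invariant is deformation-invariant, so that each type is cut out by (locally) closed conditions. First I would record that, since $\deg V_1=1$ and $\rank V_1=3$, the Atiyah/Krull--Schmidt decomposition $V_1=E_{[0]}(2,1)\oplus N$ is unique up to isomorphism; hence the degree-$0$ summand $N$, and in particular its order $\mathrm{ord}(N)$ as a torsion point of $\mathrm{Pic}^0(B)$, is an intrinsic invariant of $S$. By Lemma \ref{order of torsion} we have $N^{\otimes 4}\cong\mathcal{O}_B$ with $N$ nontrivial, so $\mathrm{ord}(N)\in\{2,4\}$; type $I_1$ is the case $\mathrm{ord}(N)=2$ and type $I_2$ the case $\mathrm{ord}(N)=4$. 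Since these two values are mutually exclusive, $\mathcal{M}_{I_1}\cap\mathcal{M}_{I_2}=\emptyset$, which settles disjointness.

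For closedness I would test against an arbitrary one-parameter family $\mathcal{S}\to T$ of such surfaces (equivalently, check stability under specialization). Because $\iota$ is a deformation invariant (see the introduction and Lemma \ref{BZ00 iota}), the locus $\{\iota=2\}$ is open and closed in $\mathcal{M}_{1,1}^{4,3}$, and on a connected family inside it the relative sheaf $V_1=f_*\omega_{\mathcal{S}/\mathcal{B}}$ decomposes fibrewise as $E_{[0]}(2,1)\oplus N$ by Lemma \ref{diagram}. The key preliminary step is to make $N$ into a genuine section of a relative torsion group scheme. Forming $\mathcal{B}=Alb(\mathcal{S}/T)\to T$ and the relative Picard scheme $\mathrm{Pic}^0(\mathcal{B}/T)$, I would realize the degree-$0$ summand concretely as $N\cong\det V_1\otimes\mathcal{O}_{\mathcal{B}}(-[0])$ (note $\det E_{[0]}(2,1)=\mathcal{O}_B([0])$, so $\det V_1=\mathcal{O}_B([0])\otimes N$), with $[0]$ the relative origin; since $\deg\det V_1=1=\deg\mathcal{O}_{\mathcal{B}}([0])$ this is a section $\nu\colon T\to\mathrm{Pic}^0(\mathcal{B}/T)$ with $\nu(t)\cong N_t$.

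The crux, and the step I expect to be the main obstacle, is to show that $\mathrm{ord}(N)$ is locally constant, i.e. to rule out a type $I_2$ family specializing to a type $I_1$ surface (a drop in order from $4$ to $2$). Here I would use that $q=1$ forces every base $\mathcal{B}_t=Alb(\mathcal{S}_t)$ to be a \emph{smooth} elliptic curve, so $\mathcal{B}\to T$ is a smooth family of elliptic curves and (in characteristic $0$) $\mathrm{Pic}^0(\mathcal{B}/T)[4]\to T$ is finite \'etale of degree $16$. Inside it the $2$-torsion $\mathrm{Pic}^0(\mathcal{B}/T)[2]$ is a closed subgroup scheme that is itself finite \'etale over $T$; since a closed immersion between schemes \'etale over the base is an open immersion, $[2]$ is \emph{open and closed} in $[4]$. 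Over the type-$I$ members the section $\nu$ lands in $[4]$ (Lemma \ref{order of torsion}), hence it lands in $[4]$ over all of $T$ because $[4]$ is closed; consequently $\nu^{-1}([2])$ is open and closed in $T$, so the order of $N$ cannot jump between $2$ and $4$ along any connected family. This no-jump statement is the heart of the proposition.

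Finally I would combine this with the fact that hyperellipticity of the general Albanese fibre is a closed condition on $\{\iota=2\}$: it is detected by the relative canonical map $w\colon\mathcal{S}\dashrightarrow\mathbb{P}(V_1)$ being generically $2:1$ onto a conic bundle, equivalently by the non-surjectivity of $S^2(V_1)\to V_2^+$, which is closed. Writing $\mathcal{M}_{I_1}=\{\iota=2\}\cap\{\text{general fibre hyperelliptic}\}\cap\{\mathrm{ord}(N)=2\}$ as the intersection of an open-and-closed set, a closed set, and (by the previous paragraph) an open-and-closed set, I conclude that $\mathcal{M}_{I_1}$ is Zariski closed in $\mathcal{M}_{1,1}^{4,3}$; the identical argument with $\{\mathrm{ord}(N)=4\}$ gives that $\mathcal{M}_{I_2}$ is closed. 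Together with the disjointness of the first paragraph this proves the proposition. The only genuinely delicate points are the algebraic construction of the torsion section $\nu$ in families and the \'etale argument above; the remaining assertions are formal.
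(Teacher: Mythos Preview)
Your argument is correct and follows essentially the same route as the paper: show that $\mathcal{M}_I$ is closed (via deformation invariance of $\iota$ and closedness of the hyperelliptic locus), then split $\mathcal{M}_I$ into $\mathcal{M}_{I_1}\sqcup\mathcal{M}_{I_2}$ using the order of $N$. Your \'etale argument for the local constancy of $\mathrm{ord}(N)$ is in fact more explicit than the paper's, which simply asserts $\overline{\mathcal{M}_{I_1}}\cap\overline{\mathcal{M}_{I_2}}=\emptyset$ on the grounds that the order of $N$ differs and then, after proving that a specialization of type $I_1$ surfaces is of type $I$, uses this disjointness of closures to conclude it is of type $I_1$; your construction of the section $\nu$ of $\mathrm{Pic}^0(\mathcal{B}/T)[4]$ and the open--closed argument for the $2$-torsion inside it is exactly the justification that step needs.

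One small slip: hyperellipticity of the general Albanese fibre is detected by the non-surjectivity of the multiplication map $S^2(V_1)\to V_2$, not $S^2(V_1)\to V_2^+$ (the latter is surjective by definition in Murakami's setup, and $V_2^+$ is not even defined before one knows the fibration is hyperelliptic). The paper instead argues, equivalently, that in a flat family of smooth genus-$3$ curves a nonhyperelliptic curve cannot specialize to a hyperelliptic one; either formulation gives the required closedness.
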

	\begin{proof}
		Since  $N$ is a torsion line bundle of order 2 for surfaces of type $I_1$,  and   it  is a torsion line bundle of order 4 for surfaces of type $I_2$, we have  $\overline{\mathcal{M}_{I_1}}\cap \overline{\mathcal{M}_{I_2}}=\emptyset$. Now we show that $\mathcal{M}_{I_1}$ is a  Zariski closed subset of $\mathcal{M}^4_{1,1}$.
		By a similar argument, one can show that $\mathcal{M}_{I_2}$ is also a Zariski closed subset of $\mathcal{M}^4_{1,1}$.

		By \cite{Cat13} Theorem 24,   given  two minimal  surfaces of general type $S_1,S_2$ with their respective canonical models $S_1',S_2'$, then
		$S_1$ and $S_2$ are deformation equivalent $\Leftrightarrow$ $S_1'$ and $S_2'$ are deformation equivalent.
		Hence  it suffices to show: if $p:\mathcal{S}\rightarrow T$ is    a smooth connected 1-parameter   family of minimal surfaces such that for $0\neq t\in T$,  $\mathcal{S}_t:=p^{-1}(t)$ is a surface of type $I_1$, then   $\mathcal{S}_0=p^{-1}(0)$ is also a surface of type $I_1$.
		
		For  $0\neq t\in T$, a general  Albanese fibre of $\mathcal{S}_t$ is hyperelliptic of genus 3  and $V_1$ is decomposable. Since the genus of the Albanese fibration and the number of the direct summands of $V_1$ are deformation invariants,  we see that  a  general Albanese  fibre of $\mathcal{S}_0$  is also of genus 3 and $V_1$ of $\mathcal{S}_0$  is also decomposable.
		Moreover,  since  a general Albanese fibre of $\mathcal{S}_t$ is  hyperelliptic,
		a  general  Albanese fibre of $\mathcal{S}_0$ is also hyperelliptic.  Otherwise we  would get a flat  family of irreducible smooth curves $ C\rightarrow T$, whose   central fibre  is a nonhyperelliptic curve  and  whose general fibre  is a hyperelliptic curve,  a contradiction.
		
		Hence $\mathcal{S}_0$ is also  a surface of type $I$.  Since   $\mathcal{M}_I=\mathcal{M}_{I_1}\cup \mathcal{M}_{I_2}$ and  $\overline{\mathcal{M}_{I_1}}\cap \overline{\mathcal{M}_{I_2}}=\emptyset$, we conclude that $\mathcal{S}_0$ is a surface of type $I_1$.  Therefore $\mathcal{M}_{I_1}$ is a Zariski closed subset of $\mathcal{M}^{4,3}_{1,1}$.
	\end{proof}

\section{Surfaces of type $I_1$}
In this section, we focus on surfaces of type $I_1$. First we show that  surfaces of type $I_1$ are in one to one correspondence with  some  bidouble covers of  $B^{(2)}$.

\subsection{Bidouble covers of $B^{(2)}$}

Recall that (cf. \cite{Cat84} Proposition 2.3)  a  smooth bidouble cover $h: S\rightarrow X$ is uniquely determined by the data of {\em effective divisors}  (sometimes we also call them {\em branch divisors}) $D_1,D_2,D_3$ and divisors $L_1,L_2,L_3$ such that $D=D_1\cup D_2\cup D_3$ has normal crossings and
\begin{equation}
\quad 2L_i\equiv D_j+	D_k ,
\quad D_k+L_k\equiv L_i+L_j.
\quad   \{i,j,k\}=\{1,2,3\}
\end{equation}
As Manetti \cite{Man94} pointed out,   these facts are true in a more general situation where  $X$ is smooth and $S$ is normal (in this case, each $D_i$ is still reduced, but $D$ may have other singularities except for ordinary double points).

\vspace{3ex}
Let $p: B^{(2)}=\{(x,y)| x\in B, y\in B, (x,y)\sim (y,x)\}\rightarrow B$  be the natural projection defined by $(x,y)\mapsto x+y$. Set $D_u:=\{(u,x)| x\in B\}$  a section of $p$  and  $E_u:=\{(x,u-x)|x\in B\}$ a fibre of $p$. Now  we construct a family of surfaces of type $I_1$ using bidouble covers of $B^{(2)}$.
\begin{pro}
	\label{construction 1}
	Let $h: S'\rightarrow X:=B^{(2)}$ be a  bidouble cover determined by   effective  divisors $D_1\equiv2D_0,D_2\equiv 4D_0-2E_0,D_3=0$, and divisors $L_1\equiv 2D_0-E_{\eta_i}, L_2\equiv D_0, L_3\equiv 3D_0-E_{\eta_i}$  such that $S'$ has at most RDP's as singularities.  Then the minimal resolution $\nu: S\rightarrow S'$ of $S'$
	yields  a surface $S$  of type $I_1$.
\end{pro}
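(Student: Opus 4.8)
The plan is to read off every assertion from Catanese's formulas for (smooth and normal) bidouble covers, applied to $h:S'\to X:=B^{(2)}$, keeping in mind that $S'$ is the canonical model and $\nu:S\to S'$ the minimal resolution, so that $K_S^2=K_{S'}^2$, $\chi(\mathcal{O}_S)=\chi(\mathcal{O}_{S'})$, $p_g(S)=p_g(S')$, $q(S)=q(S')$ and $\nu_*\omega_S=\omega_{S'}$ (RDP's being canonical). First I would record the geometry of the ruled surface $X=B^{(2)}$ over $B$ through $p$: writing a section $D_0$ and a fibre $E_u=p^*(u)$ one has $D_0^2=1$, $D_0E_0=1$, $E_0^2=0$, $K_X\sim_{alg}-2D_0+E_0$, and $\chi(\mathcal{O}_X)=0$, $q(X)=1$; moreover $B^{(2)}=\mathbb{P}(E_{[0]}(2,1))$, which I shall need for the push-forward computations. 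Throughout I abbreviate $\eta:=\eta_i$, so that $E_0\sim_{alg}E_\eta$ while $E_0-E_\eta\equiv p^*(\mathcal{O}_B([0]-\eta))$.

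Second, I would compute the numerical invariants. From $2L_i\equiv D_j+D_k$ one gets $K_{S'}=h^*M$ with $M:=K_X+\tfrac12(L_1+L_2+L_3)\sim_{alg}D_0$, whence $K_S^2=4M^2=4$. For $\chi$ I would use $\chi(\mathcal{O}_{S'})=4\chi(\mathcal{O}_X)+\tfrac12\sum_i L_i(L_i+K_X)$; a direct intersection computation gives $L_1(L_1+K_X)=L_2(L_2+K_X)=0$ and $L_3(L_3+K_X)=2$, so $\chi(\mathcal{O}_S)=1$. For $p_g$, from $h_*\omega_{S'}=\omega_X\oplus\bigoplus_i\mathcal{O}_X(K_X+L_i)$ I would show $h^0(K_X+L_1)=h^0(K_X+L_2)=0$ (these classes either meet the moving fibre $E_0$ negatively or are pulled back from a nontrivial degree-$0$ bundle on $B$), while $K_X+L_3\sim_{alg}D_0$ pushes forward to an indecomposable rank-$2$ degree-$1$ bundle on $B$, which has exactly one section by Atiyah; hence $p_g(S)=1$ and then $q(S)=1$. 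Since $M\sim_{alg}D_0$ is nef and big on $\mathbb{P}(E_{[0]}(2,1))$ (no section of negative self-intersection exists, $E_{[0]}(2,1)$ being indecomposable), $K_{S'}$ is nef and big, so $S'$ is the canonical model and $S$ is minimal of general type.

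Third, I would analyse $f:=p\circ h\circ\nu:S\to B$. Over a general $u$ the fibre $C$ is the smooth model of the bidouble cover of $E_u\cong\mathbb{P}^1$ branched on $D_1\cap E_u$ ($2$ points) and $D_2\cap E_u$ ($4$ points); computing $h^0(\omega_C)$ from $h_{C*}\omega_C=\omega_{\mathbb{P}^1}\oplus\bigoplus_i\omega_{\mathbb{P}^1}(L_i|_{E_u})$, with $\deg(L_i|_{E_u})=(2,1,3)$, yields $g(C)=3$. The three intermediate double covers of $\mathbb{P}^1$ have genera $1,0,2$, which both forces $C$ to be connected and exhibits a degree-$2$ map $C\to\mathbb{P}^1$ (the quotient onto the rational one), so $C$ is hyperelliptic. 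As $q(S)=1$, $Alb(S)$ is the elliptic curve $B$, and the connected genus-$3$ fibration $f$ onto $B$ is, up to translation, the Albanese map.

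Finally, I would identify $V_1=f_*\omega_{S/B}=f_*\omega_S=p_*h_*\omega_{S'}$. Here $p_*\omega_X=0$ and $p_*\mathcal{O}_X(K_X+L_2)=0$ (negative fibre degree), while $K_X+L_1=p^*N$ with $N:=\mathcal{O}_B([0]-\eta)$ gives $p_*\mathcal{O}_X(K_X+L_1)=N$, and $K_X+L_3\equiv D_0+p^*([0]-\eta)$ gives $p_*\mathcal{O}_X(K_X+L_3)=E_{[0]}(2,1)\otimes N$. Thus $V_1=N\oplus\big(E_{[0]}(2,1)\otimes N\big)$; since $\eta$ is $2$-torsion, $N^{\otimes2}\cong\mathcal{O}_B$, hence $\det(E_{[0]}(2,1)\otimes N)=\mathcal{O}_B([0])$ and Atiyah's uniqueness gives $E_{[0]}(2,1)\otimes N\cong E_{[0]}(2,1)$. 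Therefore $V_1=E_{[0]}(2,1)\oplus N$ is decomposable, so $\iota=2$ by Lemma \ref{BZ00 iota}, and $N$ is a nontrivial $2$-torsion line bundle, i.e. $S$ is of type $I_1$ by Definition \ref{classification}. The main obstacle I anticipate lies in this last step: pinning down the exact line bundle $N$ (not merely its algebraic-equivalence class) requires the precise canonical bundle formula and the tautological identification on $B^{(2)}=\mathbb{P}(E_{[0]}(2,1))$, and it is this linear-equivalence bookkeeping—rather than any intersection-number computation—that determines the $2$-torsion order on which the whole type-$I_1$ conclusion rests.
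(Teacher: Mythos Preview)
Your proof is correct and follows the paper's overall strategy---compute the invariants from Catanese's bidouble-cover formulas, exhibit the genus-$3$ hyperelliptic Albanese fibration through an intermediate rational quotient, and then determine $V_1$---but your final step is genuinely more direct than the paper's. The paper argues indirectly: it computes $h^0(\omega_{S'}\otimes f'^*\mathcal{O}_B(0-\eta_i))=2$, notes that an indecomposable rank-$3$ degree-$1$ bundle on $B$ cannot acquire two sections after a degree-$0$ twist (Atiyah), concludes that $V_1$ is decomposable, and then invokes Barja--Zucconi plus Atiyah again to pin down $N\cong\mathcal{O}_B(\eta_i-0)$. You instead push $h_*\omega_{S'}=\omega_X\oplus\bigoplus_i\omega_X(L_i)$ down to $B$ summand by summand and read off $V_1=N\oplus\bigl(E_{[0]}(2,1)\otimes N\bigr)\cong E_{[0]}(2,1)\oplus N$ immediately. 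This buys you an explicit identification of $N$ without the detour through a section count, and it makes the ``obstacle'' you flag at the end disappear: the linear-equivalence bookkeeping is nothing more than the canonical-bundle formula $\omega_X\cong\mathcal{O}_X(-2D_0+E_0)$ on $\mathbb{P}(E_{[0]}(2,1))$, so $K_X+L_1\equiv E_0-E_{\eta_i}=p^*([0]-\eta_i)$ holds on the nose and $N=\mathcal{O}_B([0]-\eta_i)$ is forced.

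Two minor points. First, writing $K_{S'}=h^*M$ is slightly glib: a priori only $2K_{S'}\equiv h^*(2M)$, but this suffices for $K_{S'}^2=4M^2$. Second, ``nef and big'' gives minimality of $S$ (which is all the proposition needs) but not that $S'$ is the canonical model; for the latter you want $K_{S'}$ ample, which follows because $E_{[0]}(2,1)$ is stable, hence an ample bundle, so $D_0$ is ample on $X$.
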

\begin{proof}
	Let $G=(\mathbb{Z}/2\mathbb{Z})^2=\{1,\sigma_1,\sigma_2,\sigma_3\}$ be the Galois group of the bidouble cover $h$ and let    $R_i$ be the divisorial part of $Fix(\sigma_i)$.  Set  $R:=R_1\cup R_2\cup R_3$. Then we have $D_i=h(R_i)$ and $K_{S'}=h^*K_X+R$.
	
	Since $D:=D_1\cup D_2\cup D_3\equiv6D_0-2E_0$, $K_X\equiv-2D_0+E_0$ and $\chi(\mathcal{O}_X)=0$,  by \cite{Cat84}  (2.21) and (2.22),    we have
	$$K_{S'}^2=(2K_X+D)^2=4,
	\chi(\mathcal{O}_{S'})=4\chi(\mathcal{O}_X)+\frac{1}{2}K_XD+\frac{1}{8}(D^2+\Sigma_i D_i^2)=1.$$
	Moreover,  for $i=1,2$,  one has
	$$h^i(\mathcal{O}_{S'})=h^i(h_*\mathcal{O}_{S'})=h^i(\mathcal{O}_X)+h^i(\mathcal{O}_X(-L_1))+h^i(\mathcal{O}_X(-L_2)+h^i(\mathcal{O}_X(-L_3))=1.$$
	Since $S'$ has at most RDP's as singularities and  $K_{S'}$ is ample,  we see that $S$ is minimal, $K_S^2=K_{S'}^2=4$,  $p_g(S)=h^2(\mathcal{O}_S)=h^2({\mathcal{O}_{S'}})=1$ and $q(S)=h^1(\mathcal{O}_S)=h^1({\mathcal{O}_{S'}})=1$.
	
	The bidouble  cover $h: S'\rightarrow X$  can be decomposed into two  double covers $h_1: Y\rightarrow X$ with ${h_1}_*\mathcal{O}_Y=\mathcal{O}_X\oplus \mathcal{O}_X(-L_2)$, and $h_2: S'\rightarrow Y$ with branch curve $h_1^*D_2$. Note that the general  fibre of $Y\rightarrow B$ is an irreducible smooth rational curve,  which   intersects   $h_1^*D_2$  at 8 points.  Hence  the general  fibre of $f':= p\circ h : S'\rightarrow B$ (and also the  general fibre of  $f:=f'\circ \nu: S\rightarrow B$) is irreducible and  hyperelliptic  of genus 3. By the universal property of the Albanese map  and  of the Stein factorization, we see that $B=Alb(S)$  and  $f$ is  the Albanese fibration of $S$. Therefore $S$ has a genus 3 hyperelliptic Albanese fibration.
	
	Since $V_1=f_*\omega_S=f'_*\omega_{S'}$, we have
	$$h^0(V_1 \otimes \mathcal{O}_B(0-\eta_i))=h^0(\omega_{S'}\otimes f'^*\mathcal{O}_B(0-\eta_i))=2.$$
	Since $\deg(V_1)=1$, by \cite{Ati57} Lemma 15,  $V_1$ must be decomposable. By \cite{BZ00} Theorem 2 and  Lemma \ref{diagram},   we know  that $\iota=2$ and $V_1=E_{[0]}(2,1)\oplus N$ with $N$ a nontrivial torsion line bundle over $B$. Again by \cite{Ati57} Lemma 15, we get $N\cong \mathcal{O}_B(\eta_i-0)$.   Therefore $S$ is a surface of type $I_1$.
\end{proof}

Denote by  $M_1$ the family of minimal  surfaces $S$ obtained as  the minimal resolution of  a bidouble cover  $h: S'\rightarrow X=B^{(2)}$ as in Proposition \ref{construction 1}, and by $\mathcal{M}_1$ the image of $M_1$ in $\mathcal{M}_{I_1}$. Then we have

\begin{lemma}
	\label{dim M1}
	$\dim\mathcal{M}_1=4$.
\end{lemma}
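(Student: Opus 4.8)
The plan is to read off $\dim\mathcal{M}_1$ as the number of essential moduli of the construction in Proposition \ref{construction 1}. A surface $S\in M_1$ is determined by the isomorphism class of the elliptic curve $B$ (one modulus), the two effective branch divisors $D_1\in|2D_0|$ and $D_2\in|4D_0-2E_0|$ (recall $D_3=0$ is fixed), and the \emph{discrete} data: the choice of $\eta_i\in\{\eta_1,\eta_2,\eta_3\}$ and of the line bundles $L_1,L_2,L_3$, which the relations $2L_i\equiv D_j+D_k$, $D_k+L_k\equiv L_i+L_j$ pin down up to $2$-torsion. Thus $\dim M_1=1+\dim|2D_0|+\dim|4D_0-2E_0|$, and the first task is to compute the two linear-system dimensions, the second to compare $\dim M_1$ with $\dim\mathcal{M}_1$.

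For the linear systems I would exploit the ruling $p\colon B^{(2)}=\mathbb{P}(V)\to B$, where $V:=p_*\mathcal{O}_{B^{(2)}}(D_0)$ is locally free of rank $2$. Since $D_0^2=1$ we get $\deg V=1$, and I claim $V\cong E_{[0]}(2,1)$: the alternative decomposable bundle $\mathcal{O}_B\oplus\mathcal{O}_B(0)$ is excluded by computing $h^0(B^{(2)},2D_0)=3$, which is most quickly done through the double cover $\rho\colon B\times B\to B^{(2)}$, under which $\mathcal{O}(2D_0)$ pulls back to $\mathrm{pr}_1^*\mathcal{O}_B(2\cdot0)\otimes\mathrm{pr}_2^*\mathcal{O}_B(2\cdot0)$ and the $\mathbb{Z}/2$-invariants are $S^2H^0(\mathcal{O}_B(2\cdot0))$, of dimension $3$ (whereas $S^2(\mathcal{O}_B\oplus\mathcal{O}_B(0))$ would give $4$). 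With $V\cong E_{[0]}(2,1)$ fixed, Leray together with the decompositions of $S^2 E_{[0]}(2,1)$ and $S^4 E_{[0]}(2,1)$ recorded in Lemma \ref{order of torsion} gives
$$\dim|2D_0|=h^0\!\big(S^2V\big)-1=3-1=2,$$
and, since $\mathcal{O}(-2E_0)=p^*\mathcal{O}_B(-2\cdot0)$,
$$\dim|4D_0-2E_0|=h^0\!\big(S^4V\otimes\mathcal{O}_B(-2\cdot0)\big)-1=2-1=1,$$
where only the two $\mathcal{O}_B(2\cdot0)$ summands of $S^4E_{[0]}(2,1)$ survive the twist (the three summands $\mathcal{O}_B(\eta_i+\eta_j)$ become nontrivial degree-$0$ bundles). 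Hence $\dim M_1=1+2+1=4$.

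It remains to see that the tautological map $M_1\to\mathcal{M}_1$ is generically finite, so that $\dim\mathcal{M}_1=\dim M_1=4$. Two tuples yield isomorphic surfaces only through an automorphism of $B^{(2)}$, necessarily induced by $\mathrm{Aut}(B)$. A translation $t_a$ sends $D_0$ to $D_a$ with $D_a-D_0\equiv p^*\xi_a$ for a degree-$0$ class $\xi_a$ depending linearly on $a$, so $t_a^*\mathcal{O}(2D_0)\cong\mathcal{O}(2D_0)$ forces $a$ into a fixed finite subgroup of $B$; combined with the finite group $\mathrm{Aut}(B,0)$, the discrete choice of $\eta_i$ and the $2$-torsion ambiguity in the $L_j$, these produce only finite identifications. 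Therefore no continuous parameter is lost and $\dim\mathcal{M}_1=4$.

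I expect the main obstacle to be the precise identification $V\cong E_{[0]}(2,1)$ (with the correct determinant $\mathcal{O}_B(0)$, possibly after a translation on the base so that the marked fibre $E_0$ and the decompositions of Lemma \ref{order of torsion} are compatible), which is exactly what makes $\dim|4D_0-2E_0|=1$ rather than $3$; and, secondarily, the rigorous verification of generic finiteness, i.e. that a general translation carries the branch configuration out of the chosen linear equivalence classes.
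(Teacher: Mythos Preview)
Your proposal is correct and follows essentially the same route as the paper: count $1$ for the modulus of $B$, then add $\dim|2D_0|=h^0(S^2E_{[0]}(2,1))-1=2$ and $\dim|4D_0-2E_0|=1$, and observe that only a finite group of automorphisms acts on the data. The paper is more terse, taking the identification $B^{(2)}=\mathbb{P}(E_{[0]}(2,1))$ as known from Lemma~\ref{diagram} and quoting \cite{Cat81} Proposition~10 for $h^0(-2K_X)=2$ rather than computing it via the $S^4$-decomposition, but your direct computations are valid and arrive at the same numbers.
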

\begin{proof}
	The moduli space of  $B^{(2)}$ has dimension 1. Since we have fixed the neutral element $0$ for  $B$, only a finite subgroup of $Aut(B^{(2)})$ acts on our data, and quotienting by it does not affect the dimension.    Since  $h^0(D_1)=h^0(2D_0)=h^0(S^2E_{[0]}(2,1))=3$ (cf. Lemma \ref{order of torsion}) and  $h^0(D_2)=h^0(-2K_X)=2$ (see \cite{Cat81} Proposition 10),  we have
	$$\dim\mathcal{M}_1=1+\dim |D_1|+\dim |D_2|=1+2+1=4.$$
\end{proof}

Next we show that the converse of Proposition \ref{construction 1} is also true.

($*$) For  the remainder  of  this section, we always assume that $S$ is a   surface of type $I_1$  and that  $S'$ is the  canonical model of $S$.

\begin{pro}
	\label{converse 1}
	$S'$ is a bidouble cover of  $B^{(2)}$ determined by effective   divisors $D_1\equiv2D_0,D_2\equiv 4D_0-2E_0,D_3=0$, and divisors $L_1\equiv 2D_0-E_{\eta_i}, L_2\equiv D_0, L_3\equiv 3D_0-E_{\eta_i}$.
\end{pro}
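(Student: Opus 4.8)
The plan is to run Murakami's structure theorem (Section~3.1) backwards. Since $S$ is of type $I_1$, every Albanese fibre is $2$-connected by Theorem~\ref{2-connected theorem}, so the canonical model $S'$ coincides with $X=\mathbf{Proj}(\mathcal{R})$, realized as the double cover of the conic bundle $\mathcal{C}=\mathbf{Proj}(\mathcal{A})\subset\mathbb{P}(V_1)$ branched along $\delta$. I would then show that both steps of the tower $S'\to\mathcal{C}\to B^{(2)}$ are double covers whose branch data are pulled back from $B^{(2)}$, so that the composite $S'\to B^{(2)}$ is Galois with group $(\mathbb{Z}/2\mathbb{Z})^2$, and finally read off the divisors $D_i,L_i$ and compare them with Proposition~\ref{construction 1}.

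First I would show that $\mathcal{C}\to B^{(2)}$ is a double cover. Writing $W_1:=E_{[0]}(2,1)$ and $V_1=W_1\oplus N$, the projection $\varphi$ of Lemma~\ref{diagram} from the section $\mathbb{P}(N)\subset\mathbb{P}(V_1)$ is exactly the projection onto $\mathbb{P}(W_1)=B^{(2)}$. The conic bundle $\mathcal{C}$ is cut out by the inclusion $L=\ker\sigma\hookrightarrow S^2(V_1)=S^2(W_1)\oplus(W_1\otimes N)\oplus N^{\otimes 2}$; by Lemma~\ref{order of torsion} we have $L\cong N^{\otimes 2}\cong\mathcal{O}_B$ for type $I_1$, and by Lemma~\ref{split} the projection of this inclusion onto the summand $N^{\otimes 2}$ is an isomorphism of line bundles, hence nowhere vanishing. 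Therefore the coefficient of the $N$-direction in the fibrewise conic never vanishes, $\mathcal{C}$ is disjoint from the centre $\mathbb{P}(N)$, and projecting from $\mathbb{P}(N)$ exhibits $\mathcal{C}\to B^{(2)}$ as an everywhere $2{:}1$ finite cover. Computing the discriminant over $B^{(2)}$ identifies its branch divisor as $D_1\equiv 2D_0$ and gives $\mathcal{C}=\mathbf{Spec}(\mathcal{O}_{B^{(2)}}\oplus\mathcal{O}_{B^{(2)}}(-L_2))$ with $L_2\equiv D_0$; this is precisely the intermediate cover $h_1\colon Y\to B^{(2)}$ in the proof of Proposition~\ref{construction 1}.

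Next I would show that $\delta$ is itself pulled back from $B^{(2)}$. By Remark~\ref{divisor restricion}, $\delta$ is the restriction to $\mathcal{C}$ of an effective divisor $\Delta\in|\mathcal{O}_{\mathbb{P}(V_1)}(4)\otimes\pi^*(V_2^-)^{-2}|\cong|\mathcal{O}_{\mathbb{P}(V_1)}(4)\otimes\pi^*\mathcal{O}_B(-2\cdot 0)|$. The cohomology computation in the proof of Lemma~\ref{order of torsion} shows that for type $I_1$ every section in $H^0(S^4(V_1)\otimes\mathcal{O}_B(-2\cdot 0))$ already lies in the summand $H^0(S^4(W_1)\otimes\mathcal{O}_B(-2\cdot 0))$, i.e. is a pure quartic in the $W_1$-coordinates with no dependence on the $N$-direction. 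Hence $\Delta$ is the $\varphi$-pullback of a divisor on $B^{(2)}$, and restricting to $\mathcal{C}$ (which avoids $\mathbb{P}(N)$) gives $\delta=h_1^{*}D_2$ for an effective $D_2\equiv 4D_0-2E_0$, exactly the branch curve appearing in Proposition~\ref{construction 1}. In particular $\delta$ is invariant under the involution $\tau$ of $\mathcal{C}/B^{(2)}$.

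Finally, because the branch divisor $\delta$ of $S'\to\mathcal{C}$ is pulled back from $B^{(2)}$, the involution $\tau$ lifts to $S'$, so that $S'\to B^{(2)}$ acquires a $(\mathbb{Z}/2\mathbb{Z})^2$-action and becomes a bidouble cover; setting $D_3=0$ and solving Catanese's relations $2L_i\equiv D_j+D_k$, $D_k+L_k\equiv L_i+L_j$ from \cite{Cat84} then yields $L_1\equiv 2D_0-E_{\eta_i}$, $L_2\equiv D_0$, $L_3\equiv 3D_0-E_{\eta_i}$, where $N\cong\mathcal{O}_B(\eta_i-0)$. I expect the main obstacle to be precisely this last step: promoting the two double-cover factorizations to a genuine $(\mathbb{Z}/2\mathbb{Z})^2$-structure and, above all, pinning down each $L_i$ up to \emph{linear} (not merely algebraic) equivalence, since the four possible square roots of a given $D_j+D_k$ differ by the $2$-torsion bundles $\mathcal{O}_B(\eta_i-0)$. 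The torsion enters exactly here: the compatibility relation $D_2+L_2\equiv L_1+L_3$ forces $2E_0\equiv 2E_{\eta_i}$, that is $N^{\otimes 2}\cong\mathcal{O}_B$, so the bookkeeping closes precisely on the defining condition of type $I_1$.
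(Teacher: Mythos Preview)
Your first two steps are correct and in fact take a cleaner route than the paper. The paper (Lemma~\ref{finite}) passes to an \'etale double cover of $B$ to obtain global coordinates and then argues by contradiction that the coefficient $a_3$ of $y_3^2$ is nonzero; you instead read this off directly from Lemma~\ref{split}, since the composite $L\hookrightarrow S^2(V_1)\twoheadrightarrow N^{\otimes 2}$ being an isomorphism says exactly that the $N^{\otimes 2}$-coefficient of the conic is nowhere vanishing. Likewise, the paper (Lemma~\ref{bidouble cover}) shows $\delta$ is pulled back from $B^{(2)}$ via a dimension count using the double-cover formula on $\mathcal C$, whereas you observe from the decomposition in Lemma~\ref{order of torsion} that the only nonvanishing summand of $H^0(S^4(V_1)\otimes\mathcal O_B(-2\cdot 0))$ is $H^0(S^4(W_1)\otimes\mathcal O_B(-2\cdot 0))$, so every extension $\Delta$ of $\delta$ is a cone from $\mathbb P(N)$. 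Both arguments are valid and your versions are more direct.

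The genuine gap is in your third step, and your own attempted resolution is where it goes wrong. The relations $2L_i\equiv D_j+D_k$ determine each $L_i$ only up to $2$-torsion in $\mathrm{Pic}(B^{(2)})$, and the compatibility $D_2+L_2\equiv L_1+L_3$ that you invoke is automatically satisfied for \emph{any} consistent choice of square roots; it is a tautology, not a constraint, so it cannot ``force $N^{\otimes 2}\cong\mathcal O_B$'' (that is already the type $I_1$ hypothesis) nor single out $\eta_i$. The paper supplies the two missing ingredients in Lemma~\ref{divisor class}. First, the numerical condition $q(S)=1$ gives $h^1(\mathcal O_X(-L_1))=0$ in the decomposition of $h_*\mathcal O_{S'}$, which \emph{excludes} the choice $L_1\equiv 2D_0-E_0=-K_X$ (for which $h^1=1$) and leaves only $L_1\equiv 2D_0-E_{\eta_i}$ for some nontrivial $\eta_i$. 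Second, the residual three possibilities for $(L_2,L_3)$ are not distinguished intrinsically: the paper shows they are interchanged by the translations $(x,y)\mapsto(x+\eta_j,y+\eta_j)$ of $B^{(2)}$, so the conclusion holds only \emph{up to an automorphism of $B^{(2)}$}, a qualification you should add. Your direct claim that the conic structure already gives $L_2\equiv D_0$ on the nose would need justification you have not provided; the paper does not attempt this and instead normalizes via the automorphism.
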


Since the proof is long, we divide it into  three steps in the following three lemmas:

(1)  (Lemma \ref{finite})  there is a finite morphism $h: S'\rightarrow B^{(2)}$ of degree 4;

(2)  (Lemma \ref{bidouble cover}) the morphism $h$ is a bidouble cover with branch divisors $(D_1,D_2,D_3)$ as  stated above;

(3)  (Lemma \ref{divisor class})  up to an automorphism of $B^{(2)}$, $L_1,L_2,L_3$ satisfy  the above  linear equivalence relations.

\vspace{2ex}

To prove (1), we first  introduce the map $h$. Since the relative canonical map  $w: S\rightarrow \mathbb{P}(V_1)$ factors as the composition  $\nu: S\rightarrow S'$ (the map contracting $(-2)$-curves) and $\mu: S'\rightarrow \mathbb{P}(V_1)$. Let $h:=\varphi\circ\mu: S'\dashrightarrow B^{(2)}$. By Lemma \ref{diagram}, we have the following commutative diagram
$$\xymatrix{S'\ar[r]^\mu\ar@{-->}[dr]^h & \mathbb{P}(V_1)\ar@{-->}[d]^\varphi \\ ~ & B^{(2)}
}$$ where  $w=\mu\circ\nu$ and  $w'=h\circ\nu$. Since $w$ is a morphism (cf. Lemma \ref{bpf}) and the general Albanese fibre of $S$ is hyperelliptic, we see that   $\mu: S'\rightarrow \mathcal{C}:=\mu(S')\subset \mathbb{P}(V_1)$ is a finite double cover. Moreover  $\mathcal{C}$ is exactly the conic bundle in Murakami's  structure theorem.  Now we prove (1).

\begin{lemma}
	\label{finite}
	The map  $h: S'\dashrightarrow B^{(2)}$ is  a finite morphism of degree 4.
\end{lemma}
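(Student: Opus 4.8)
The plan is to factor $h=\varphi\circ\mu$ and to reduce everything to the last leg, since $\mu\colon S'\to\mathcal{C}$ is already a finite double cover onto the conic bundle $\mathcal{C}=\mu(S')$. Writing $V_1=E_{[0]}(2,1)\oplus N=W_1\oplus N$ as in Lemma \ref{diagram}, the projection $\varphi$ is the fibrewise projection of the $\mathbb{P}^2$-bundle $\mathbb{P}(V_1)$ onto the $\mathbb{P}^1$-bundle $\mathbb{P}(W_1)=B^{(2)}$, whose indeterminacy locus is exactly the section $\Sigma:=\mathbb{P}(N)$ of $\pi$. Over each $b\in B$ the fibre $\mathcal{C}_b$ is a conic in $\mathbb{P}(V_{1,b})\cong\mathbb{P}^2$ and $\varphi$ restricts to projection from the point $p_b:=\Sigma_b$. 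I would then carry out three steps: (1) show $\mathcal{C}\cap\Sigma=\emptyset$; (2) deduce that $h$ is an everywhere-defined morphism with finite fibres; (3) read off $\deg h=4$ fibrewise.

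\textbf{Step 1 (the crux).} I would show that the conic bundle $\mathcal{C}$ is disjoint from the centre $\Sigma$. For type $I_1$ we have $L\cong N^{\otimes 2}\cong\mathcal{O}_B$ (Lemmas \ref{LN} and \ref{order of torsion}), so $\mathcal{C}\in|\mathcal{O}_{\mathbb{P}(V_1)}(2)|$ is cut out fibrewise by the quadratic form $q$ spanning the image of the inclusion $L\hookrightarrow S^2(V_1)$ (the datum $\sigma$ of Murakami's theorem has kernel $L$). In the decomposition $S^2(V_1)=(\bigoplus_i\mathcal{O}_B(\eta_i))\oplus(E_{[0]}(2,1)\otimes N)\oplus L$ the last summand is exactly the $N^{\otimes 2}$-direction, and the point $p_b$ of $\Sigma$ lies on $\mathcal{C}_b$ iff the $N^{\otimes 2}$-component of $q$ vanishes at $b$. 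But that component is the composite $L\hookrightarrow S^2(V_1)\twoheadrightarrow L$, which is an isomorphism by Lemma \ref{split}; hence it is a nowhere-vanishing constant section and $p_b\notin\mathcal{C}_b$ for every $b$. Thus $\mathcal{C}\cap\Sigma=\emptyset$. This is the step I expect to be the main obstacle: it is precisely where the everywhere-definedness of $h$ is forced, and the reason the earlier splitting lemma was needed.

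\textbf{Step 2 (morphism and finiteness).} Since $\mu$ is a morphism and $\varphi$ is a morphism on $\mathbb{P}(V_1)\setminus\Sigma$, which contains $\mathcal{C}$ by Step 1, the composite $h=\varphi\circ\mu$ is defined on all of $S'$, i.e. $h$ is a morphism. For finiteness I would argue fibrewise over $B$: because $p_b\notin\mathcal{C}_b$, no component of the conic $\mathcal{C}_b$ is a line through the centre of projection, so $\varphi|_{\mathcal{C}_b}\colon\mathcal{C}_b\to\mathbb{P}^1$ contracts nothing and is finite. Hence $\varphi|_{\mathcal{C}}$ has finite fibres; being a morphism of projective varieties it is proper, and proper plus quasi-finite yields finite. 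As a composition of finite morphisms, $h=(\varphi|_{\mathcal{C}})\circ\mu$ is then finite.

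\textbf{Step 3 (degree).} The degree is multiplicative along $h=(\varphi|_{\mathcal{C}})\circ\mu$: the double cover $\mu$ has degree $2$, and fibrewise $\varphi|_{\mathcal{C}_b}$ is the projection of a smooth conic from an external point, hence of degree $2$; therefore $\deg h=2\cdot 2=4$. Equivalently one may note $\deg h=\deg w'=4$ via $w'=h\circ\nu$ with $\nu$ birational. This completes the plan.
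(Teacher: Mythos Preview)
Your argument is correct, and it is genuinely cleaner than the paper's. Both proofs factor $h=(\varphi|_{\mathcal{C}})\circ\mu$ and reduce to showing that $\varphi|_{\mathcal{C}}$ is a finite double cover; the difference lies entirely in your Step~1. The paper passes to an \'etale double cover $\tilde B\to B$ in order to trivialise $V_1$, writes $\tilde{\mathcal{C}}$ as $a_1y_1^2+\cdots+a_3y_3^2+\cdots=0$ with $a_3\in H^0(\mathcal{O}_{\tilde B})$, and then shows $a_3\neq 0$ by contradiction: if $a_3=0$ then the section $\{y_1=y_2=0\}\subset\tilde{\mathcal{C}}$, and a Jacobian computation forces the branch divisor $\tilde\delta$ to be singular along it, hence nonreduced, contradicting admissibility of the $5$-tuple. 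You instead observe that the $y_3^2$-coefficient is, intrinsically, the composite $L\hookrightarrow S^2(V_1)\twoheadrightarrow N^{\otimes 2}=L$, which Lemma~\ref{split} already proved to be an isomorphism; hence it is a nowhere-vanishing section of $\mathcal{H}om(L,L)\cong\mathcal{O}_B$ and $\mathcal{C}\cap\Sigma=\emptyset$ follows immediately.

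What each approach buys: yours is coordinate-free, avoids the \'etale cover and the detour through $\delta$, and makes transparent that Lemma~\ref{split} is exactly the input needed (in particular your Step~1 works verbatim for type $I_2$ as well, since Lemma~\ref{split} is proved before the case split). The paper's explicit coordinate set-up, on the other hand, is reused in the very next lemma (Lemma~\ref{bidouble cover}) to write down the ramification divisor of $\varphi|_{\mathcal{C}}$ and identify the branch class on $B^{(2)}$; so its extra machinery is not wasted. Your Steps~2 and~3 (proper $+$ quasi-finite $\Rightarrow$ finite, and $\deg h=2\cdot 2$) are standard and fine.
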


\begin{proof}
	Since  $\mu: S'\rightarrow \mathcal{C}$ is a finite double cover,   it suffices to show that $\varphi|_{\mathcal{C}}: \mathcal{C}\dashrightarrow B^{(2)}$ is also a finite double cover.
	To prove this, we need  to study the equation of $\mathcal{C}\subset\mathbb{P}(V_1)$ and use the definition of   $\varphi$.
	
	To get global relative coordinates on  fibres of $\mathbb{P}(V_1)$, first  we  take a unramified double cover of $B$.
	Since $N$ is a 2-torsion line bundle, we can find  a unramified double cover  $\phi: \tilde{B}\rightarrow B $  such that $\phi^*N\cong \mathcal{O}_{\tilde{B}}$ and  $\phi^*0=\tilde{0}+\eta$ for some nontrivial  2-torsion point $\eta\in \tilde{B}$, where $\tilde{0}$ is the neutral element in the group structure of $\tilde{B}$,  and such that $\phi(\tilde{0})=0$. Moreover, by \cite{Ish05} Theorem 2.2 and Lemma 2.3, we have
	$\phi^*E_{[0]}(2,1) \cong \mathcal{O}_{\tilde{B}}(x)\oplus \mathcal{O}_{\tilde{B}}(x')$, where $x, x'$ are two points on $\tilde{B}$ such that  $\mathcal{O}_B(\phi_*(x)-0)\cong N$ (cf. \cite{Fri98} Chap. 2, Proposition 27) and $x'=x\oplus \eta$ in the group law of $\tilde{B}$.
	
	Set $\tilde{E}:=\phi^*(E_{[0]}(2,1)\oplus N)$ and $\tilde{\mathcal{C}}:=\Phi^*\mathcal{C}$. Then we have the following commutative diagram:
	$$\xymatrix
	{\mathcal{\tilde{C}}\subset \mathbb{P}(\tilde{E})\ar[rrr]^{\Phi}\ar[d]^{\tilde{\varphi}}&&&\mathcal{C}\subset \mathbb{P}(V_1)\ar[d]^{\varphi} \\
		\tilde{X}:=	\mathbb{P}( \mathcal{O}_{\tilde{B}}(x)\oplus \mathcal{O}_{\tilde{B}}(x')) \ar[rrr] \ar[d]^{\tilde{p}} &&& X= \mathbb{P}(E_{[0]}(2,1)) \ar[d]^p \\
		\tilde{B}\ar[rrr]^{\phi} &&&B}$$
	where $\tilde{\varphi}: \mathbb{P}(\tilde{E})\rightarrow \mathbb{P}( \mathcal{O}_{\tilde{B}}(x)\oplus \mathcal{O}_{\tilde{B}}(x'))$ is the natural projection induced by the injection $\mathcal{O}_{\tilde{B}}(x)\oplus \mathcal{O}_{\tilde{B}}(x')\rightarrow \tilde{E}=\mathcal{O}_{\tilde{B}}(x)\oplus \mathcal{O}_{\tilde{B}}(x')\oplus \phi^*N$.
	
	Note that the unramified double cover $\Phi: \mathbb{P}(\tilde{E})\rightarrow \mathbb{P}(V_1)$ induces an involution $T_\eta$ on $\mathbb{P}(\tilde{E})$. Let $J:=\{1,T_\eta\}$ be the group generated by $T_\eta$. Then for any divisor $D$ on $\mathbb{P}(V_1)$, we have $H^0(\mathbb{P}(V_1), D)\cong H^0(\mathbb{P}(\tilde{E})(\Phi^*D))^J$ (the $J$-invariant part of $H^0(\mathbb{P}(\tilde{E})(\Phi^*D))$).
	
	From  the commutative diagram above,  to show that   $\varphi|_{\mathcal{C}}$ is a finite double cover,  it suffices  to  show   that $\tilde{\varphi}|_{\tilde{\mathcal{C}}}: \tilde{\mathcal{C}}\rightarrow \mathbb{P}(\mathcal{O}_{\tilde{B}}(\tilde{0})\oplus \mathcal{O}_{\tilde{B}}(\eta))$  is a finite double cover.
	
	Take global relative coordinates $y_1:\mathcal{O}_{\tilde{B}}(x) \rightarrow \tilde{E}$, $y_2:\mathcal{O}_{\tilde{B}}(x') \rightarrow \tilde{E}$, $y_3:\mathcal{O}_{\tilde{B}} \rightarrow \tilde{E}$ on fibres of $\mathbb{P}(\tilde{E})$. In notation of section 4.2, we have  $\mathcal{C}\in|\mathcal{O}_{\mathbb{P}(V_1)}(2)|$, hence $\mathcal{\tilde{C}}$ is a $J$-invariant divisor in $|\mathcal{O}_{\mathbb{P}(\tilde{E})}(2)|$.
	Therefore the  equation of $\tilde{\mathcal{C}}\subset \mathbb{P}(\tilde{E})$ can be written as
	\begin{equation}
	\label{equation of C}
	f_1=a_1y_1^2+a_2y_2^2+a_3y^2_3+a_4y_1y_2+a_5y_1y_3-a_6y_2y_3,
	\end{equation}
	where $a_1,a_2\in H^0(\mathcal{O}_{\tilde{B}}(2x)),  a_3\in H^0(\mathcal{O}_{\tilde{B}}), a_4\in H^0(\mathcal{O}_{\tilde{B}}(x+x'), a_5\in H^0(\mathcal{O}_{\tilde{B}}(x))$ and  $a_6\in H^0(\mathcal{O}_{\tilde{B}}(x'))$. Since the action of $T_\eta^*$ is $y_1\mapsto y_2, y_2\mapsto y_1,y_3\mapsto -y_3$ and $\tilde{\mathcal{C}}$ is $J$-invariant, we see that  $T_\eta^*a_1=a_2$ and $ T_\eta^*a_5=a_6$.
	
	Since  finite double cover is a local property, we can check this locally. 	 Choose a local coordinate $t$ for the base curve $B$. Then $(t,(y_1:y_2:y_3))$ is a local coordinate on  $\mathbb{P}(\tilde{E})$ and  $(t,(y_1:y_2))$ is a local coordinate on $\tilde{X}$. 	The action of  $\tilde{\varphi}$ is locally like $(t,(y_1:y_2:y_3)) \mapsto (t,(y_1:y_2))$. From the equation of $\mathcal{\tilde{C}}$,   to  show   that $\tilde{\varphi}|_{\tilde{\mathcal{C}}}$ is a finite double cover,   it suffices to show that $a_3\neq 0$.

	\vspace{2ex}
	If $a_3=0$, then  $C:=\{y_1=y_2=0\} \subset \tilde{\mathcal{C}}$. Recall that
	the branch divisor $\delta$ of $u: S'\rightarrow \mathcal{C}$ is contained in $|\mathcal{O}_{\mathcal{C}}(4)\otimes \pi^*\mathcal{O}_B(-2\cdot 0)||_{\mathcal{C}}\cong |\mathcal{O}_{\mathbb{P}(V_1)}(4)\otimes \pi^*\mathcal{O}_B(-2\cdot 0)||_{\mathcal{C}}$ and it is reduced. Hence $\tilde{\delta}:=\Phi^*\delta$ is a $J$-invariant reduced divisor in $|\mathcal{O}_{\mathbb{P}(\tilde{E})}(4)\otimes \tilde{\pi}^*\mathcal{O}_{\tilde{B}}(-2\cdot \tilde{0}-2\eta)||_{\mathcal{\tilde{C}}}$. Since $2x\equiv 2x' \equiv 2\eta \equiv 2\cdot \tilde{0}$ and $x+x'\equiv \tilde{0}+\eta \not\equiv 2\cdot \tilde{0}$,  one sees easily that $y_1^4,y_1^2y_2^2, y_2^4$ is a basis of $H^0(\mathcal{O}_{\mathbb{P}(\tilde{E})}(4)\otimes \tilde{\pi}^*\mathcal{O}_{\tilde{B}}(-2\cdot \tilde{0}-2\eta))$. Let $f_2$ be the equation of $\tilde{\delta}$ on $\tilde{\mathcal{C}}$. Then $f_2$ has the form $f_2=b_1y_1^4+b_2y_1^2y_2^2+b_3y_2^4$, where $b_1,b_2,b_3\in\mathbb{C}.$
	
	Note that  $C$ is contained in  $\tilde{\delta}$.  Since  the Jacobian matrix of $(f_1,f_2)$ at any point of $C$ has the form
	\begin{gather*}
	\begin{pmatrix}
	\frac{\partial f}{\partial t}& a_5y_3 & a_6y_3 & 0 \\
	0 & 0 & 0 & 0
	\end{pmatrix}
	\end{gather*}
	which has rank 1,   $\tilde{\delta}$ is singular  along $C$. Hence $\tilde{\delta}$ is nonreduced,    a contradiction.
	
	Therefore  $\varphi: \mathcal{C}\rightarrow B^{(2)}$ is a finite double cover and $h$ is a finite morphism of degree 4.
\end{proof}

\begin{remark}
	From that above Lemma, one sees easily that fibrewise, the composition map $S'\rightarrow \mathcal{C}\rightarrow B^{(2)}$ is just: a genus 3 hyperelliptic curve $\stackrel{2:1}\rightarrow$ a conic curve  in $\mathbb{P}^2$ $\stackrel{2:1}\rightarrow$ $\mathbb{P}^1$.
\end{remark}

Now we prove (2).
\begin{lemma}
	\label{bidouble cover}
	The morphism 	$h: S'\rightarrow B^{(2)}$   is a bidouble cover  with branch divisors    $D_1\equiv2D_0,D_2\equiv 4D_0-2E_0,D_3=0$.
\end{lemma}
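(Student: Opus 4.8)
The plan is to analyze the finite degree-4 morphism $h: S'\rightarrow B^{(2)}$ established in Lemma \ref{finite} and show it carries the structure of a bidouble cover, then identify the branch divisors. Recall that $h$ factors as $S'\stackrel{\mu}\rightarrow \mathcal{C}\stackrel{\varphi}\rightarrow B^{(2)}$, where both $\mu$ and $\varphi|_{\mathcal{C}}$ are finite double covers (the former by the hyperelliptic structure and Murakami's theorem, the latter by the local computation with $a_3\neq 0$ in Lemma \ref{finite}). So $h$ is a composition of two double covers, and the first task is to upgrade this to a genuine $(\mathbb{Z}/2\mathbb{Z})^2$-Galois structure. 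I would produce the two commuting involutions directly: the hyperelliptic involution $\tau$ on $S'$ coming from the cover $\mu$ (it acts fibrewise as the hyperelliptic involution on each genus 3 fibre), and a second involution $\iota$ lifting the involution of $\mathcal{C}\rightarrow B^{(2)}$ through $\mu$. The key point is that these generate a group isomorphic to $(\mathbb{Z}/2\mathbb{Z})^2$ with $S'/G = B^{(2)}$, so that by Catanese \cite{Cat84} and Manetti \cite{Man94} (invoked in the setup before Proposition \ref{construction 1}) $h$ is a smooth (or normal, since $S'$ is a canonical model with RDP's) bidouble cover.

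Once the bidouble structure is in place, the branch divisors $D_1,D_2,D_3$ are determined geometrically: $D_i = h(R_i)$ where $R_i$ is the divisorial part of $\mathrm{Fix}(\sigma_i)$. To identify them, I would use the factorization via $\mathcal{C}$. The divisor corresponding to the double cover $\varphi|_{\mathcal{C}}:\mathcal{C}\rightarrow B^{(2)}$ gives one branch divisor, which from the conic-bundle equation \eqref{equation of C} on $\tilde{\mathcal{C}}$ is the discriminant of the quadratic form in $(y_1,y_2,y_3)$ restricted to the section locus — this should give a divisor in the class $2D_0$ (a conic-bundle double-point locus), hence $D_1\equiv 2D_0$. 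The branch divisor $\delta$ of $\mu: S'\rightarrow \mathcal{C}$, which by Remark \ref{divisor restricion} lies in $|\mathcal{O}_{\mathbb{P}(V_1)}(4)\otimes\pi^*(V_2^-)^{-2}|\big|_{\mathcal{C}}$ and by Lemma \ref{order of torsion} pushes down to the class of $-2K_X$, should account for $D_2\equiv 4D_0-2E_0$. The vanishing $D_3=0$ reflects that one of the three intermediate double covers is unramified, consistent with $N$ being a nontrivial $2$-torsion line bundle inducing an \'etale factor.

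The arithmetic of the linear equivalence classes I would verify using the numerical invariants already computed: $K_X\equiv -2D_0+E_0$, $\chi(\mathcal{O}_X)=0$, and the required $K_{S'}^2=(2K_X+D)^2=4$ with $D=D_1\cup D_2\cup D_3\equiv 6D_0-2E_0$. Matching $K_{S'}^2$, $\chi(\mathcal{O}_{S'})$, and the Hodge numbers against the values forced by $S$ being of type $I_1$ pins down the total class of $D$, and combined with the two separate covers identified above this forces $(D_1,D_2,D_3)=(2D_0,\,4D_0-2E_0,\,0)$ up to relabeling the $\sigma_i$. The values of $L_1,L_2,L_3$ are then determined by the bidouble relations \eqref{equation} (i.e. $2L_i\equiv D_j+D_k$), but those are the content of the subsequent Lemma \ref{divisor class}, so here I would stop at the branch divisors.

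The main obstacle I anticipate is verifying the commutativity of the two involutions and that their product is precisely the third deck transformation with the claimed fixed locus — in other words, checking that the composition of two double covers really is Galois with group $(\mathbb{Z}/2\mathbb{Z})^2$ rather than merely a degree-4 cover. This is where the global structure could fail, and the cleanest way around it is to exploit the unramified double cover $\phi:\tilde{B}\rightarrow B$ from Lemma \ref{finite}: pulling everything back to $\mathbb{P}(\tilde{E})$, where the explicit equation \eqref{equation of C} of $\tilde{\mathcal{C}}$ and the $J$-action $y_1\mapsto y_2,\,y_2\mapsto y_1,\,y_3\mapsto -y_3$ are available, lets one write down both involutions in coordinates and check commutativity and the fixed loci by direct computation, then descend the conclusion to $B^{(2)}$ using the $J$-invariance already established.
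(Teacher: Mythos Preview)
Your proposal is correct and follows essentially the same architecture as the paper: factor $h$ through $\mathcal{C}$, produce the hyperelliptic involution $\sigma_2$ from $\mu$, lift the involution $\sigma_1'$ of $\mathcal{C}\rightarrow B^{(2)}$ to $\sigma_1$ on $S'$, and read off the branch divisors from the two intermediate covers. The one place where the paper is sharper is precisely the obstacle you flag. Rather than checking in coordinates on $\tilde{\mathcal{C}}$ that the two involutions commute, the paper shows directly that the branch divisor $\delta$ of $\mu$ is $\sigma_1'$-invariant by a cohomological count: using the double-cover formula for $\varphi|_{\mathcal{C}}$ one gets
\[
h^0\bigl((4T-2H_0)|_{\mathcal{C}}\bigr)=h^0(4D_0-2E_0)+h^0(3D_0-2E_0)=h^0(4D_0-2E_0),
\]
so the entire linear system $|(4T-2H_0)|_{\mathcal{C}}|$ equals $(\varphi|_{\mathcal{C}})^*|4D_0-2E_0|$ and $\delta$ is automatically a pullback from $B^{(2)}$. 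This immediately gives the lift $\sigma_1$ (and commutativity with $\sigma_2$ is then formal, since $\sigma_1$ descends through $\mu$), and simultaneously identifies $D_2$ as the image of $\delta$ in $|4D_0-2E_0|$. Your coordinate approach via $f_2=b_1y_1^4+b_2y_1^2y_2^2+b_3y_2^4$ would reach the same conclusion (the equation visibly does not involve $y_3$, hence is invariant under the sheet-swap of $\tilde{\mathcal{C}}\rightarrow\tilde{X}$), but the $h^0$ argument is cleaner and avoids the \'etale base change. For $D_1\equiv 2D_0$ the paper computes the ramification of $\varphi|_{\mathcal{C}}$ as the discriminant in $y_3$ of \eqref{equation of C}, which is $\equiv 2T|_{\mathcal{C}}$, exactly as you sketch; $D_3=0$ then follows.
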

\begin{proof}
	As in section 4.1, we denote by $H$ the fibre of $\pi: \mathbb{P}(V_1)\rightarrow B$ and by $T$ the divisor on $\mathbb{P}(V_1)$ such that $\pi_*\mathcal{O}(T)=V_1$. Similarly,  we denote by $\tilde{H}$ the fibre of $\tilde{\pi}: \mathbb{P}(\tilde{E})\rightarrow \tilde{B}$ and by $\tilde{T}$ the divisor  on $\mathbb{P}(\tilde{E})$ such that $\tilde{\pi}_*\mathcal{O}(\tilde{T})=\tilde{E}$.
	By  Lemma \ref{finite}, the	ramification divisor of $\tilde{\varphi}|_{\tilde{\mathcal{C}}}$ on $\tilde{\mathcal{C}}$ is defined by
	$$(a_5y_1+a_6y_2)^2-4a_3(a_1y_1^2+a_2y_2^2+a_4y_1y_2)=f_1=0$$
	and is  linearly equivalent to $2\tilde{T}|_{\tilde{\mathcal{C}}}$. Thus  the ramification divisor of $\varphi|_{\mathcal{C}}$ on $\mathcal{C}$ is linearly equivalent to $2T|_{\mathcal{C}}$  (which is the $J$-invariant part of $2\tilde{T}|_{\tilde{\mathcal{C}}}$).   From the definition of $\varphi$, we know that   $D_0=\varphi(T)$.  Hence  the branch divisor of $\varphi|_{\mathcal{C}}$ is linearly equivalent to    $2D_0$.

	Since  $h^0((4T-2H_0)|_{\mathcal{C}})=h^0(\varphi^*(4D_0-2E_0)|_{\mathcal{C}})=h^0(4D_0-2E_0)+h^0(3D_0-2E_0)$ (double cover formula) $=h^0(4D_0-2E_0)$  (cf. \cite{CC93} Theorem 1.13), we get $|(4T-2H_0)|_{\mathcal{C}}|=(\varphi|_{\mathcal{C}})^*|4D_0-2E_0|$.  Hence  the branch divisor of $\mu:S'\rightarrow \mathcal{C}$ is invariant under the involution $\sigma_1'$ of $\mathcal{C}$ induced by the double cover $\varphi|_{\mathcal{C}}:\mathcal{C} \rightarrow B^{(2)}$. So $\sigma_1'$ lifts to an involution $\sigma_1$ on $S'$. Note that the double cover $\mu:S'\rightarrow \mathcal{C}$ induces another involution $\sigma_2$ on $S'$. Hence we get a group
	$G:=\{1,\sigma_1,\sigma_2,\sigma_3:=\sigma_1\circ \sigma_2 \}$ acting effectively on $S'$, and the quotient $S'/G$ is nothing but $B^{(2)}$.
	
	Therefore   $h: S'\rightarrow B^{(2)}$ is a bidouble cover. Moreover, the three branch divisors of $h$  are $D_1=h(Fix(\sigma_1))\equiv 2D_0, D_2=h(Fix(\sigma_2))\equiv 4D_0-2E_0 , D_3=h(Fix(\sigma_3))=0$.
\end{proof}

Now we prove (3).
\begin{lemma}
	\label{divisor class}
	Up to an automorphism of $B^{(2)}$, we can assume the data  $(L_1,L_2,L_3)$ of $h:S'\rightarrow X:=B^{(2)}$ to be $L_1\equiv 2D_0-E_{\eta_i}, L_2\equiv D_0, L_3\equiv 3D_0-E_{\eta_i}$.
\end{lemma}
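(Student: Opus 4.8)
The plan is to determine the three classes $L_1,L_2,L_3$ first as divisor classes up to $2$-torsion, then to pin the torsion down by computing $V_1=f_*\omega_S$, and finally to remove the residual ambiguity by a translation of $B^{(2)}$.

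First I would feed the classes $D_1\equiv 2D_0$, $D_2\equiv 4D_0-2E_0$, $D_3=0$ from Lemma \ref{bidouble cover} into the bidouble-cover relations $2L_i\equiv D_j+D_k$. These give $L_1\equiv 2D_0-E_0+\tau_1$, $L_2\equiv D_0+\tau_2$, $L_3\equiv 3D_0-E_0+\tau_3$, where each $\tau_i$ is a $2$-torsion class. Since $p:B^{(2)}\to B$ is a $\mathbb{P}^1$-bundle, the torsion of $\mathrm{Pic}(B^{(2)})$ is $p^*\mathrm{Pic}^0(B)_{\mathrm{tors}}$, so $\tau_i\in\{0,p^*N_1,p^*N_2,p^*N_3\}$ with $N_j=\mathcal{O}_B(\eta_j-0)$. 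The remaining relation $D_3+L_3\equiv L_1+L_2$ forces $\tau_3=\tau_1+\tau_2$ (the other two are automatic, the $\tau_i$ being $2$-torsion). It thus suffices to show that, after an automorphism of $B^{(2)}$, one may take $\tau_2=0$ and $\tau_1=\tau_3=p^*N$, where $N$ is the torsion bundle with $V_1=E_{[0]}(2,1)\oplus N$.

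To identify $\tau_1$ I would compute $V_1=f_*\omega_S=p_*(h_*\omega_{S'})$. As $h$ is finite and flat ($S'$ Cohen--Macaulay, $B^{(2)}$ regular), relative duality gives $h_*\omega_{S'}\cong\omega_{B^{(2)}}\otimes\big(\mathcal{O}_{B^{(2)}}\oplus\mathcal{O}(L_1)\oplus\mathcal{O}(L_2)\oplus\mathcal{O}(L_3)\big)$ with $\omega_{B^{(2)}}\equiv -2D_0+E_0$. The summands from $\mathcal{O}_{B^{(2)}}$ and from $L_2$ have negative fibre degree and push to $0$; the summand $\omega_{B^{(2)}}\otimes\mathcal{O}(L_1)\equiv p^*M_1$ (fibre degree $0$) pushes to the rank-one bundle $M_1$ with $p^*M_1=\tau_1$; and $\omega_{B^{(2)}}\otimes\mathcal{O}(L_3)\equiv D_0+\tau_3$ (fibre degree $1$) pushes to an indecomposable rank-two bundle of the form $E_{[0]}(2,1)\otimes M_3$. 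Hence $V_1\cong M_1\oplus\big(E_{[0]}(2,1)\otimes M_3\big)$; comparing with $V_1=E_{[0]}(2,1)\oplus N$ and using uniqueness of the Atiyah (Krull--Schmidt) decomposition, the rank-one summands agree, so $M_1\cong N$, i.e. $\tau_1=p^*N$. Because $S$ is of type $I_1$, $N$ is a nontrivial $2$-torsion bundle, so $\tau_1=p^*N_i\neq 0$ for a unique $i$, whence $2D_0-E_0+\tau_1\equiv 2D_0-E_{\eta_i}$.

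Finally I would kill $\tau_2$ with the translation $\alpha_c:B^{(2)}\to B^{(2)}$, $(x,y)\mapsto(x+c,y+c)$, for a $2$-torsion point $c\in B$. Since $2c=0$, $\alpha_c$ covers the identity on $B$, hence fixes the fibre class $E_0$, acts trivially on $p^*\mathrm{Pic}^0(B)$, and satisfies $\alpha_c^*D_0\equiv D_0+p^*\mathcal{O}_B(c-0)$ (a $2$-torsion shift). Thus $\alpha_c$ leaves $D_1,D_2,D_3$ unchanged (the coefficients $2D_0,4D_0$ are shifted by an even multiple of a $2$-torsion class) and fixes $\tau_1$, while sending $\tau_2\mapsto\tau_2+p^*\mathcal{O}_B(c-0)$ and $\tau_3\mapsto\tau_3+p^*\mathcal{O}_B(c-0)$. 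Choosing $c$ with $p^*\mathcal{O}_B(c-0)=\tau_2$ gives $\tau_2=0$ and, as $\tau_3=\tau_1+\tau_2$ beforehand, $\tau_3=\tau_1=p^*N$ afterwards, which is exactly $L_1\equiv 2D_0-E_{\eta_i}$, $L_2\equiv D_0$, $L_3\equiv 3D_0-E_{\eta_i}$. I expect the main obstacle to be the direct-image step identifying $\tau_1$ with $p^*N$: it requires handling relative duality and the two nonzero pushforwards correctly and then matching indecomposable summands. The normalization of $\tau_2$ is then routine, the restriction to $2$-torsion translations being precisely what preserves the $D_i$ and the value of $\tau_1$.
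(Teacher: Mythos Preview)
Your argument is correct, but it takes a different route from the paper for the key step of pinning down $\tau_1$. The paper argues cohomologically: from $q(S)=1$ and $h_*\mathcal{O}_{S'}=\mathcal{O}_X\oplus\bigoplus_i\mathcal{O}_X(-L_i)$ one gets $h^1(\mathcal{O}_X(-L_1))=0$; since $-L_1\equiv K_X-\tau_1$, this forces $L_1\not\equiv -K_X$, i.e.\ $\tau_1\neq 0$, hence $L_1\equiv 2D_0-E_{\eta_i}$ for \emph{some} nontrivial $2$-torsion point $\eta_i$. You instead compute $V_1=p_*h_*\omega_{S'}$ summand by summand via relative duality and the projection formula, and then match indecomposable Atiyah pieces to conclude $\tau_1=p^*N$; this is heavier machinery but buys you more, namely that the specific $\eta_i$ is the one with $N\cong\mathcal{O}_B(\eta_i-0)$ (a fact the paper establishes only in the construction direction, Proposition~\ref{construction 1}). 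Note that the rank-two comparison $E_{[0]}(2,1)\otimes M_3\cong E_{[0]}(2,1)$ gives no constraint on $M_3$ since tensoring by any $2$-torsion line bundle fixes the determinant and hence, by Atiyah's classification, the isomorphism class; fortunately you do not need a constraint there, as $\tau_3=\tau_1+\tau_2$ already determines it. The final translation step normalising $\tau_2$ is essentially the same in both proofs: the paper lists the possible $(L_2,L_3)$ explicitly and writes down the translations identifying them, while you phrase it as the $2$-torsion subgroup of $B$ acting simply transitively on the choices of $\tau_2$ while fixing $D_1,D_2,L_1$ (because their $D_0$-coefficients are even).
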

\begin{proof}
	Since $$h^1(\mathcal{O}_S)=h^1(\mathcal{O}_{S'})=h^1(h_*\mathcal{O}_{S'})=h^1(\mathcal{O}_X)+h^1(\mathcal{O}_X(-L_1))+h^0(\mathcal{O}_X(-L_2))+h^0(\mathcal{O}_X(-L_3))=1$$
	and  $h^1(\mathcal{O}_X)=1$, we see  $h^1(\mathcal{O}_X(-L_1))=0$. In particular, we have  $L_1\not\equiv -K_X$. Since $2L_1\equiv D_2+D_3\equiv 4D_0-2E_0$, we have  $L_1\equiv 2D_0-E_{\eta_i}$ for a nontrivial 2-torsion point $\eta_i \in B$.
	Since $L_2+L_3\equiv D_1+L_1\equiv  4D_0-E_{\eta_i}$, there are three   choices for  $(L_2,L_3)$:
	
	(i) $L_2\equiv D_0, L_3\equiv 3D_0-E_{\eta_i}$;
	
	(ii) $L_2\equiv D_{\eta_i}, L_3\equiv 3D_0-E_0$;
	
	(iii) $L_2\equiv D_{\eta_j}(j\neq i), L_3\equiv 3D_0-E_{\eta_k}$.
	
	Now we show that  for fixed $(D_1,D_2,D_3,L_1)$  above, the three choices (i) (ii) (iii) for $(L_2, L_3)$ are equivalent up to an automorphism of $X=B^{(2)}$. The automorphism $(x,y)\mapsto (x+\eta_i,y+\eta_i)$ on $X$ fixes fibres of $X\rightarrow B$ and translates $D_u$ to $D_{u+\eta_i}$. Hence  it fixes $(D_1,D_2,D_3,L_1)$ and maps $(L_2,L_3)$ in  (i) to $(L_2,L_3)$  in  (ii).  Similarly, the automorphism $(x,y)\mapsto (x+\eta_j,y+\eta_j)$ fixes $(D_1,D_2,D_3,L_1)$ and maps $(L_2,L_3)$ in  (i) to  $(L_2,L_3)$ in  (iii).

	Therefore, up to an automorphism of $B^{(2)}$, we can   assume the data $(L_1,L_2,L_3)$ of  $h$ to be 	$L_1\equiv 2D_0-E_{\eta_i}, L_2\equiv D_0, L_3\equiv 3D_0-E_{\eta_i}$.
\end{proof}

Combining   Propositions \ref{construction 1} and  \ref{converse 1} together, we get the following
\begin{theorem}
	\label{type 1}
	If $h: S'\rightarrow B^{(2)}$ is a  bidouble cover determined by   branch  divisors $D_1\equiv2D_0,D_2\equiv 4D_0-2E_0,D_3=0$, and divisors $L_1\equiv 2D_0-E_{\eta_i}, L_2\equiv D_0, L_3\equiv 3D_0-E_{\eta_i}$  such that $S'$ has at most RDP's as singularities, then the minimal resolution  $S$ of $S'$  is a surface of type $I_1$.  Conversely, if $S$ is  a surface of type $I_1$, then  the canonical model  $S'$ of $S$  is a bidouble cover of  $B^{(2)}$  (where $B=Alb(S)$) determined by the branch  divisors $(D_1,D_2,D_3)$ and divisors $(L_1,L_2,L_3)$ in the respective linear equivalence classes above.
\end{theorem}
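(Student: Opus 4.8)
The plan is simply to assemble the two halves of the biconditional from the results already established in this section, since the forward and converse implications are precisely Propositions \ref{construction 1} and \ref{converse 1}. For the forward direction I would invoke Proposition \ref{construction 1} verbatim: given a bidouble cover $h\colon S'\rightarrow B^{(2)}$ with branch divisors $D_1\equiv 2D_0$, $D_2\equiv 4D_0-2E_0$, $D_3=0$ and divisors $L_1\equiv 2D_0-E_{\eta_i}$, $L_2\equiv D_0$, $L_3\equiv 3D_0-E_{\eta_i}$ whose total space has at most rational double points, that proposition already shows the minimal resolution $S$ satisfies $p_g=q=1$, $K_S^2=4$, carries a genus $3$ hyperelliptic Albanese fibration with $V_1=E_{[0]}(2,1)\oplus N$ decomposable and $N\cong\mathcal{O}_B(\eta_i-0)$, hence is of type $I_1$.

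For the converse I would invoke Proposition \ref{converse 1}, whose content is distributed across the three preceding lemmas. Starting from a surface $S$ of type $I_1$ with canonical model $S'$, Lemma \ref{finite} produces the finite morphism $h\colon S'\rightarrow B^{(2)}$ of degree $4$, obtained as the composition of the double cover $\mu\colon S'\rightarrow\mathcal{C}$ onto the Murakami conic bundle with the projection $\varphi|_{\mathcal{C}}$, the latter being finite of degree $2$. Lemma \ref{bidouble cover} then promotes $h$ to a $(\mathbb{Z}/2\mathbb{Z})^2$-cover and identifies the branch divisors as $D_1\equiv 2D_0$, $D_2\equiv 4D_0-2E_0$, $D_3=0$, while Lemma \ref{divisor class} determines $(L_1,L_2,L_3)$ in the asserted linear equivalence classes up to an automorphism of $B^{(2)}$. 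Thus the canonical model of any type $I_1$ surface is exactly a bidouble cover of the prescribed shape.

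Because the two propositions are literally the two directions of the theorem, the only thing left in assembling the statement is to observe that the branch and divisor data output by Proposition \ref{converse 1} coincide with those hypothesized in Proposition \ref{construction 1}, and this matching of linear equivalence classes is immediate. Consequently there is essentially no obstacle in this final combination; the genuine difficulty of the whole result lies in the converse, and within it in Lemma \ref{finite}. The crux there is establishing that the coefficient $a_3$ in the equation of $\tilde{\mathcal{C}}$ does not vanish---equivalently that the section $\{y_1=y_2=0\}$ does not sit inside the conic bundle---which is forced by the reducedness of the branch divisor $\delta$ through the Jacobian rank computation. Once that finiteness is secured, the bidouble structure and the divisor-class bookkeeping follow from the cohomological and automorphism arguments of Lemmas \ref{bidouble cover} and \ref{divisor class}, and the theorem is complete.
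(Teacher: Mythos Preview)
Your proposal is correct and matches the paper's approach exactly: the theorem is obtained simply by combining Propositions \ref{construction 1} and \ref{converse 1}, with the latter already broken into Lemmas \ref{finite}, \ref{bidouble cover}, and \ref{divisor class}. Your identification of the nonvanishing of $a_3$ in Lemma \ref{finite} as the crux of the converse is also accurate.
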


The following corollary follows easily from Lemma \ref{dim M1} and  Theorem \ref{type 1}.
\begin{cor}
	\label{dimension 1}
	$\mathcal{M}_1=\mathcal{M}_{I_1}$. In particular, we have $\dim\mathcal{M}_{I_1}=4$.
\end{cor}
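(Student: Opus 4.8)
The plan is to prove the set-theoretic equality $\mathcal{M}_1=\mathcal{M}_{I_1}$ by establishing the two inclusions separately, and then to read off the dimension from Lemma \ref{dim M1}.

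First I would verify $\mathcal{M}_1\subseteq \mathcal{M}_{I_1}$. This is essentially built into the definition of $\mathcal{M}_1$ as the image of the family $M_1$ in $\mathcal{M}_{I_1}$: by the forward direction of Theorem \ref{type 1} (equivalently Proposition \ref{construction 1}), the minimal resolution $S$ of any bidouble cover $h: S'\rightarrow B^{(2)}$ with the prescribed branch data $D_1\equiv 2D_0, D_2\equiv 4D_0-2E_0, D_3=0$ and divisors $L_1\equiv 2D_0-E_{\eta_i}, L_2\equiv D_0, L_3\equiv 3D_0-E_{\eta_i}$ is a surface of type $I_1$. Hence every member of $M_1$ defines a point of $\mathcal{M}_{I_1}$, and $\mathcal{M}_1$, being the collection of all such points, is contained in $\mathcal{M}_{I_1}$.

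The substantive inclusion is the reverse one, $\mathcal{M}_{I_1}\subseteq \mathcal{M}_1$, and here I would invoke the converse direction of Theorem \ref{type 1} (Proposition \ref{converse 1}). Let $S$ be an arbitrary surface of type $I_1$ and let $S'$ be its canonical model. By the converse statement, $S'$ is a bidouble cover of $B^{(2)}$ (with $B=\mathrm{Alb}(S)$) determined by branch divisors $(D_1,D_2,D_3)$ and divisors $(L_1,L_2,L_3)$ in exactly the linear equivalence classes appearing in Proposition \ref{construction 1}. Since $S$ is the minimal resolution of $S'$, this exhibits $S$ as a member of the family $M_1$, so its moduli point lies in $\mathcal{M}_1$. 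As $S$ was arbitrary, $\mathcal{M}_{I_1}\subseteq \mathcal{M}_1$.

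Combining the two inclusions gives $\mathcal{M}_1=\mathcal{M}_{I_1}$, and then $\dim\mathcal{M}_{I_1}=\dim\mathcal{M}_1=4$ by Lemma \ref{dim M1}. I do not anticipate a genuine obstacle: the entire content is already packaged in Theorem \ref{type 1}, and the only point requiring a word of care is the identification of moduli points, namely that the surface of type $I_1$ one starts with and the minimal resolution produced by the construction represent the same point of $\mathcal{M}_{1,1}^{4,3}$. This is automatic, since a surface of type $I_1$ is by construction the minimal resolution of its own canonical model, which Proposition \ref{converse 1} identifies with the relevant bidouble cover.
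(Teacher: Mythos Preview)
Your proposal is correct and matches the paper's approach exactly: the paper states that the corollary ``follows easily from Lemma \ref{dim M1} and Theorem \ref{type 1}'' without giving further details, and your argument is precisely the unpacking of that sentence---the two inclusions from the two directions of Theorem \ref{type 1}, followed by the dimension count from Lemma \ref{dim M1}.
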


\subsection{Natural deformations of smooth bidouble covers}
Let $S$ be a general surface of type $I_1$. Then we have a smooth bidouble cover $h: S\rightarrow X=B^{(2)}$  determined by branch  divisors $D_1\equiv2D_0,D_2\equiv 4D_0-2E_0,D_3=0$, and divisors $L_1\equiv 2D_0-E_{\eta_i}, L_2\equiv D_0, L_3\equiv 3D_0-E_{\eta_i}$. Since $D_3=0$,    $h$ is a simple bidouble cover (see \cite{Cat85} Definition 22.4).
In this subsection, we study the natural deformations of $S$. (see \cite{Cat85} p. 75 for details)

Let $L_1':=D_0, L_2':=2D_0-E_{\eta_i}$ and let $z_1,z_2$ be the fibre coordinates relative to the two summands of $V:=\oplus _{j=1}^2\mathcal{O}_X(-L_j')$. Let $x_i$ be a section of $\mathcal{O}_X(D_i)$ with div$(x_i)=D_i$ ($i=1,2$).   By \cite{Cat85} p. 75,    $S$ is a subvariety of $V$ defined by equations:

\begin{equation}
\label{equation  simple bidouble cover}
\quad z^2_1=x_1,
\quad  z_2^2=x_2.
\end{equation}
and a natural deformation $Y$ of $S$  is defined by equations
\begin{equation}
\label{equation natural deformation simple bidouble cover}
\quad z^2_1=x_1 +b_1z_2,
\quad  z_2^2=x_2+b_2z_1.
\end{equation}
with $b_1\in H^0(\mathcal{O}_X(D_1-L_2'))$, $b_2\in H^0(\mathcal{O}_X(D_2-L_1'))$.

Note that $h^0(\mathcal{O}_X(D_1-L_2'))=h^0(E_{\eta_i})=h^0(\mathcal{O}_B(\eta_i))=1$.  By \cite{CC93} Theorem 1.13, we have $H^0(\mathcal{O}_X(D_2-L_1'))=H^0(3D_0-2E_0)=0$, hence we always have $b_2=0$.

\vspace{3ex}
Denote by $M'_1$ the family of all surfaces arising as natural deformations of some general surface of type $I_1$, and by $\mathcal{M}'_1$ the image of $M'_1$ in $\mathcal{M}_{1,1}^{4,3}$. Let $\overline{\mathcal{M}'_1}$ be the Zariski closure of $M'_1$ in $\mathcal{M}_{1,1}^{4,3}$. Then we have
\begin{pro}
	\label{dim M'1}
	$\dim \mathcal{M}'_1=5$ and  $\mathcal{M}_{I_1}$ is a 4-dimensional subspace of $\overline{\mathcal{M}'_1}$.
\end{pro}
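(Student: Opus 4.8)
The plan is to establish the two claims of Proposition \ref{dim M'1} separately: first that $\dim\mathcal{M}'_1=5$, and second that $\mathcal{M}_{I_1}$ sits inside $\overline{\mathcal{M}'_1}$ as a codimension-one subspace. For the dimension count, I would start from the already-known fact that $\dim\mathcal{M}_{I_1}=4$ (Corollary \ref{dimension 1}) and add the number of effective natural-deformation parameters. From the displayed equations (\ref{equation natural deformation simple bidouble cover}), the deformation direction $b_1$ lives in $H^0(\mathcal{O}_X(D_1-L_2'))$ while $b_2$ lives in $H^0(\mathcal{O}_X(D_2-L_1'))$. The excerpt has already computed $h^0(\mathcal{O}_X(D_1-L_2'))=h^0(\mathcal{O}_B(\eta_i))=1$ and $h^0(\mathcal{O}_X(D_2-L_1'))=0$, so there is exactly a one-parameter family of genuine natural deformations on top of the surfaces of type $I_1$. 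Thus I expect $\dim\mathcal{M}'_1=\dim\mathcal{M}_{I_1}+1=4+1=5$, provided this single parameter actually moves the surface to a new point of moduli rather than being absorbed by automorphisms or by the bidouble-cover data.

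The natural second step is therefore to verify that the extra parameter is \emph{effective}, i.e.\ that turning on $b_1\neq 0$ produces a surface not already in $\mathcal{M}_{I_1}$ and that distinct values of $b_1$ (modulo the finite symmetries used in Lemma \ref{dim M1}) give non-isomorphic surfaces generically. Concretely I would argue that for general $b_1$ the surface $Y$ defined by (\ref{equation natural deformation simple bidouble cover}) is no longer a \emph{bidouble} (Galois $(\mathbb{Z}/2)^2$) cover of $B^{(2)}$ — the term $b_1 z_2$ breaks the $\sigma_1$-invariance — so by Theorem \ref{type 1} its general Albanese fibre must become nonhyperelliptic (this matches the final sentence of Theorem \ref{theorem 4,3}). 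Since hyperellipticity of the general fibre is a deformation-closed condition, a general $Y$ cannot lie in $\overline{\mathcal{M}_{I_1}}$, so the one extra dimension is not spurious. This gives the strict inequality $\dim\mathcal{M}'_1>\dim\mathcal{M}_{I_1}$, and combined with the obvious upper bound $\dim\mathcal{M}'_1\le \dim\mathcal{M}_{I_1}+h^0(\mathcal{O}_X(D_1-L_2'))=5$ forces equality.

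For the inclusion statement, the point is essentially tautological once the family is set up correctly: $\mathcal{M}_{I_1}$ is recovered from $M'_1$ by setting $b_1=b_2=0$, so $\mathcal{M}_{I_1}\subset\overline{\mathcal{M}'_1}$, and since surfaces of type $I_1$ form the locus where the extra parameter vanishes they constitute a $4$-dimensional (hence codimension-one) subspace of the $5$-dimensional $\overline{\mathcal{M}'_1}$. I would phrase this using the flat family $Y\to T$ over a parameter space $T$ whose general member is a natural deformation and whose special members ($b_1=0$) are the surfaces of type $I_1$, invoking the openness/closedness of the hyperelliptic locus to keep the two strata distinct.

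The main obstacle I anticipate is not the numerology but the effectivity/irreducibility bookkeeping: one must be careful that the single parameter $b_1$ genuinely contributes a new modulus and is not already accounted for among the $4$ parameters of $\mathcal{M}_{I_1}$ or killed by the finite automorphism group of $B^{(2)}$ fixing the group structure. Establishing that the general natural deformation has nonhyperelliptic general Albanese fibre — so that the new surfaces truly escape $\mathcal{M}_{I_1}$ — is the crux; it relies on analyzing how the perturbation $b_1 z_2$ deforms the fibrewise double cover $\mu\colon S'\to\mathcal{C}$ of the conic bundle into a plane quartic that is no longer the $2{:}1$ image structure forced in the hyperelliptic case (cf.\ the Remark after Lemma \ref{finite}). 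The irreducibility of $\mathcal{M}'_1$, though asserted in the surrounding text, also ultimately rests on the irreducibility of this one-parameter family of deformation data, which I would take to follow from the irreducibility of the linear systems $|D_1|,|D_2|$ together with the one-dimensional choice of $b_1$.
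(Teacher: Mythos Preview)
Your approach is essentially the same as the paper's: a direct parameter count. The paper's proof is in fact even terser than yours---it simply writes $\dim\mathcal{M}'_1=1+\dim|D_1|+\dim|D_2|+h^0(\mathcal{O}_X(D_1-L_2'))=1+2+1+1=5$, deferring the effectivity concern you raise (that $b_1\neq 0$ genuinely produces a new surface with nonhyperelliptic general fibre) to the subsequent Remark~\ref{remark general surface in M'1} and ultimately to the computation $h^1(T_S)=5$ in Theorem~\ref{irreducible component 1}.
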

\begin{proof}
	Since there is one  parameter for $X=B^{(2)}$ (see Lemma \ref{dim M1}). From equations (\ref{equation natural deformation simple bidouble cover}), we see that $\dim \mathcal{M}'_1=1+\dim |D_1|+ \dim |D_2|+ h^0(\mathcal{O}_X(D_1-L_2'))=1+2+1+1=5$.
\end{proof}

\begin{remark}
	\label{remark general surface in M'1}
	From equations (\ref{equation natural deformation simple bidouble cover}), It is easy to see that a natural deformation $Y$ of $S$ is a bidouble cover of $X$ if and only if $b_1=0$ (since we always have $b_2=0$). By Theorem \ref{type 1}, $Y$ has a genus 3 hyperelliptic Albanese fibration if and only if $b_1=0$. Since $\dim \mathcal{M}_{I_1}<\dim \mathcal{M}'_1$, we see that  a general surface in $M'_1$ has a genus  3 nonhyperelliptic Albanese fibation.
\end{remark}

\subsection{$h^1(T_S)$ for a general surface $S$ of type $I_1$}
In this section  we calculate $h^1(T_S)$ for a general surface $S$ of type $I_1$.  Note that for general choices of $D_1\in  |2D_0|$ and $D_2\in |4D_0-2E_0|$, $D_1$, $D_2$ are both irreducible  smooth  curves and they intersect transversally. Hence     $S$ is a smooth bidouble cover of  $X:=B^{(2)}$ determined by effective divisors $(D_1,D_2,D_3)$ and divisors $(L_1,L_2,L_3)$ as in Theorem \ref{type 1}.

By Riemann-Roch, we have  $h^0(T_S)-h^1(T_S)+h^2(T_S)=2K_S^2-10\chi(\mathcal{O}_S)=-2$. Since    $h^0(T_S)=0$, we have  $h^1(T_S)=h^2(T_S)+2=h^0(\Omega_S\otimes \omega_S)+2$.
By  \cite{Cat84} Theorem 2.16, we have
$$H^0(\Omega_S\otimes \omega_S)\cong H^0(h_*(\Omega_S\otimes \omega_S))$$
$$=H^0(\Omega_X(logD_1, logD_2, logD_3)\otimes\omega_X)\oplus(\bigoplus_{i=1}^3 H^0(\Omega_X(logD_i)\otimes \omega_X(L_i))).$$

Hence to calculate $h^1(T_S)$, it suffices to calculate $h^0(\Omega_X(logD_1, logD_2, logD_3)$ and $h^0(\Omega_X(logD_i)\otimes \omega_X(L_i))$ $(i=1,2,3)$.

\begin{lemma}
	\label{cotangentsheaf}
	$\Omega_X=\mathcal{O}_X\oplus \omega_X.$
\end{lemma}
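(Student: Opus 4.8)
The plan is to realize $X=B^{(2)}$ as a $\mathbb{P}^1$-bundle over the elliptic curve $B$ and then split the cotangent sequence by a cohomology-vanishing argument. Recall that $p:X=B^{(2)}\to B$ is the $\mathbb{P}^1$-bundle $\mathbb{P}(E_{[0]}(2,1))$, with $D_0$ a section and $E_0=p^*(0)$ a fibre, and that $K_X\equiv -2D_0+E_0$, so $\omega_X\cong\mathcal{O}_X(-2D_0+E_0)$. First I would write down the relative cotangent sequence of the smooth morphism $p$,
\[
0\to p^*\Omega_B\to \Omega_X\to \Omega_{X/B}\to 0.
\]
Since $B$ is an elliptic curve, $\Omega_B\cong\mathcal{O}_B$ and $\omega_B\cong\mathcal{O}_B$, so $p^*\Omega_B\cong\mathcal{O}_X$ and $\Omega_{X/B}=\omega_{X/B}=\omega_X\otimes p^*\omega_B^{-1}\cong\omega_X$. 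Thus the sequence becomes
\[
0\to \mathcal{O}_X\to \Omega_X\to \omega_X\to 0.
\]

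The desired isomorphism $\Omega_X\cong\mathcal{O}_X\oplus\omega_X$ is then equivalent to the splitting of this extension, whose class lies in $\mathrm{Ext}^1(\omega_X,\mathcal{O}_X)\cong H^1(X,\omega_X^{-1})$. Hence it suffices to prove $H^1(X,\omega_X^{-1})=0$.

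To compute $H^1(X,\omega_X^{-1})=H^1(X,\mathcal{O}_X(2D_0-E_0))$ I would use the Leray spectral sequence for $p$. Restricted to a fibre, $\mathcal{O}_X(2D_0-E_0)$ is $\mathcal{O}_{\mathbb{P}^1}(2)$, so $R^1p_*\mathcal{O}_X(2D_0-E_0)=0$ and consequently $H^1(X,\mathcal{O}_X(2D_0-E_0))\cong H^1(B,p_*\mathcal{O}_X(2D_0-E_0))$. By the projection formula and the identification $p_*\mathcal{O}_X(2D_0)\cong S^2E_{[0]}(2,1)$ (cf. the proof of Lemma \ref{dim M1}), together with the decomposition $S^2E_{[0]}(2,1)\cong\bigoplus_{i=1}^3\mathcal{O}_B(\eta_i)$ recorded in the proof of Lemma \ref{order of torsion}, one obtains
\[
p_*\mathcal{O}_X(2D_0-E_0)\cong\bigoplus_{i=1}^3\mathcal{O}_B(\eta_i-0).
\]
Each summand is a nontrivial degree-$0$ line bundle on $B$ (as $\eta_i\neq 0$), hence has vanishing cohomology, giving $H^1(B,p_*\mathcal{O}_X(2D_0-E_0))=0$. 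Therefore $\mathrm{Ext}^1(\omega_X,\mathcal{O}_X)=0$, the sequence splits, and $\Omega_X\cong\mathcal{O}_X\oplus\omega_X$.

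The genuinely substantive step is the vanishing $H^1(X,\omega_X^{-1})=0$; everything else is formal. This reduces, via $p$, to the observation that twisting $S^2E_{[0]}(2,1)$ by $\mathcal{O}_B(-0)$ produces a sum of nontrivial (in particular degree-$0$ and nontrivial) torsion line bundles on the elliptic curve, which have no cohomology. I do not anticipate a serious obstacle beyond keeping the $\mathbb{P}(E)$-conventions consistent (the normalization of $D_0$ and the identity $K_X\equiv -2D_0+E_0$), all of which are already fixed earlier in the paper.
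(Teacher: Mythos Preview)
Your proof is correct and follows essentially the same approach as the paper: both arguments use the relative cotangent sequence $0\to p^*\Omega_B\to\Omega_X\to\Omega_{X/B}\to 0$, identify the outer terms with $\mathcal{O}_X$ and $\omega_X$, and then split the sequence by showing $H^1(X,\omega_X^{-1})=H^1(B,S^2E_{[0]}(2,1)\otimes\mathcal{O}_B(-0))=0$. The only cosmetic difference is that the paper identifies $\Omega_{X/B}\cong\omega_X$ via the relative Euler sequence, whereas you do it directly via $\Omega_{X/B}=\omega_{X/B}\cong\omega_X\otimes p^*\omega_B^{-1}$; both are fine.
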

\begin{proof}  Since $p: X=B^{(2)}\rightarrow B$ is a $\mathbb{P}^1$-bundle, by \cite{Hart77} Chap. III, Ex. 8.4, we have the following exact sequence
	$$ 0\rightarrow \Omega_{X/B}\rightarrow (p^*E_{[0]}(2,1))(-1)\rightarrow \mathcal{O}_X\rightarrow 0.$$
	Since $$\wedge^2( (p^*E_{[0]}(2,1))(-1))= \mathcal{O}_X(-2)\otimes p^*\mathcal{O}_B(0)\cong \Omega_{X/B}\otimes \mathcal{O}_X,$$
	we see  $\Omega_{X/B}\cong \mathcal{O}_X(-2)\otimes p^*\mathcal{O}_B(0)\cong \omega_X$.
	
	On the other hand, we have the exact sequence
	$$0\rightarrow p^*\omega_B\rightarrow \Omega_X\rightarrow \Omega_{X/B}\rightarrow 0$$
	i.e. 	$$0\rightarrow \mathcal{O}_X\rightarrow \Omega_X\rightarrow \omega_X\rightarrow 0.$$
	Since $Ext^1(\omega_X, \mathcal{O}_X)\cong H^1(\omega_X^{-1})=H^1(\mathcal{O}_X(2D_0-E_0))$ and $h^1(\mathcal{O}_X(2D_0-E_0))=h^1(S^2(E_{[0]}(2,1))(-0))=h^0(S^2(E_{[0]}(2,1))(-0))=0$  (cf. proof of Lemma \ref{order of torsion}),
	we see that $\Omega_X=\mathcal{O}_X\oplus \omega_X.$
\end{proof}

\begin{lemma}
	\label{term 0}
	$h^0(\Omega_X(logD_1, logD_2, logD_3)\otimes\omega_X)=1.$
\end{lemma}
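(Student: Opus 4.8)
The plan is to peel off the logarithmic structure one divisor at a time, reduce the whole computation to a single connecting homomorphism between one‑dimensional spaces, and then show that this map vanishes by recognizing it as a Kodaira–Spencer class that I can kill using the explicit equation of $D_2$.

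\textbf{Step 1 (reduce to $D_2$).} Since $D_3=0$, the sheaf is $\Omega_X(\log D_1,\log D_2)\otimes\omega_X$. For a general surface of type $I_1$ the curves $D_1\in|2D_0|$ and $D_2\in|4D_0-2E_0|$ are smooth and meet transversally, so I would use the residue sequence
\[0\to\Omega_X(\log D_2)\otimes\omega_X\to\Omega_X(\log D_1,\log D_2)\otimes\omega_X\to\omega_X|_{D_1}\to 0.\]
Using the intersection numbers $D_0^2=1,\ D_0E_0=1,\ E_0^2=0$ and $K_X\equiv-2D_0+E_0$, one gets $\deg(\omega_X|_{D_1})=K_X\cdot D_1=(-2D_0+E_0)(2D_0)=-2<0$ on the irreducible curve $D_1$, so $H^0(\omega_X|_{D_1})=0$. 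Hence the inclusion is an isomorphism on $H^0$ and it is enough to compute $h^0(\Omega_X(\log D_2)\otimes\omega_X)$.

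\textbf{Step 2 (relative logarithmic sequence over $B$).} From the proof of Lemma \ref{cotangentsheaf} I have $\Omega_{X/B}\cong\omega_X$ and $p^*\omega_B\cong\mathcal{O}_X$. Because $D_2$ contains no fibre of $p$ and is finite of degree $D_2\cdot E_0=4$ over $B$, one has $\Omega_{X/B}(\log D_2)\cong\Omega_{X/B}\otimes\mathcal{O}_X(D_2)$, and the usual relative sequence extends to
\[0\to p^*\omega_B\to\Omega_X(\log D_2)\to\Omega_{X/B}(\log D_2)\to 0.\]
Tensoring with $\omega_X$ and using $D_2\equiv-2K_X$, i.e. $\mathcal{O}_X(D_2)\cong\omega_X^{\otimes-2}$, the outer terms simplify to $p^*\omega_B\otimes\omega_X\cong\omega_X$ and $\Omega_{X/B}(\log D_2)\otimes\omega_X\cong\omega_X^{\otimes2}\otimes\mathcal{O}_X(D_2)\cong\mathcal{O}_X$, so I obtain
\[0\to\omega_X\to\Omega_X(\log D_2)\otimes\omega_X\to\mathcal{O}_X\to 0.\]
Since $X$ is ruled over an elliptic curve, $h^0(\omega_X)=p_g(X)=0$, $h^1(\omega_X)=h^1(\mathcal{O}_X)=q(X)=1$, and $h^0(\mathcal{O}_X)=1$. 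The long exact sequence then yields
\[h^0\!\big(\Omega_X(\log D_2)\otimes\omega_X\big)=\dim\ker\big(\partial\colon H^0(\mathcal{O}_X)\to H^1(\omega_X)\big),\]
a map $\mathbb{C}\to\mathbb{C}$; so the answer is $1$ exactly when $\partial=0$ and $0$ otherwise.

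\textbf{Step 3 (the crux: $\partial=0$).} The map $\partial$ is cup product with the extension class of the last sequence, which sits in $\mathrm{Ext}^1(\mathcal{O}_X,\omega_X)\cong H^1(\omega_X)\cong H^0(B,R^1p_*\omega_X)$ and is precisely the Kodaira–Spencer class of the family of four‑pointed fibres $(F_t,\,D_2\cap F_t)$; it vanishes iff these four‑pointed $\mathbb{P}^1$'s are isotrivial, i.e. iff the cross‑ratio of $D_2\cap F_t$ is constant in $t$. I expect this to be the main obstacle, and I would resolve it with the explicit shape of $D_2$. After the unramified base change $\phi\colon\tilde B\to B$ of Lemma \ref{finite}, in the fibre coordinates $(y_1:y_2)$ the divisor $D_2$ is cut out — by the description of $|4D_0-2E_0|$ obtained in Lemma \ref{order of torsion} and Remark \ref{divisor restricion} — by a $J$‑invariant quartic $b_1(y_1^4+y_2^4)+b_2\,y_1^2y_2^2$ with \emph{constant} coefficients $b_1,b_2$. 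Its four roots form a set $\{\pm\lambda,\pm\mu\}$ for fixed $\lambda,\mu$, and the transition functions of $\mathbb{P}(\mathcal{O}_{\tilde B}(x)\oplus\mathcal{O}_{\tilde B}(x'))$ act on $y_1/y_2$ only by scaling; as scalings preserve cross‑ratios, the four points have constant cross‑ratio along the fibration. Thus the Kodaira–Spencer class vanishes, $\partial=0$, and $h^0(\Omega_X(\log D_1,\log D_2,\log D_3)\otimes\omega_X)=1$. The one subtlety to check carefully is the identification of $\partial$ with this Kodaira–Spencer class and the passage from $\tilde B$ back to $B$; everything else is a routine bundle computation.
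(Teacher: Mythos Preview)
Your proof is correct, but the route is genuinely different from the paper's.

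Both arguments agree on Step~1 (peeling off $D_1$ via the residue sequence and $K_X\cdot D_1=-2$) and reduce to computing $h^0(\Omega_X(\log D_2)\otimes\omega_X)$. From there the paper takes a shortcut: it considers the double cover $g\colon Y\to X$ branched along $D_2\in|-2K_X|$ with $g_*\mathcal{O}_Y=\mathcal{O}_X\oplus\mathcal{O}_X(K_X)$, observes that $K_Y\equiv g^*(K_X-K_X)=0$ and $q(Y)=2$, so $Y$ is an abelian surface (in fact $B\times B$), whence $\Omega_Y\otimes\omega_Y\cong\mathcal{O}_Y^{\oplus 2}$ and $h^0=2$. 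The eigenspace decomposition
\[
H^0(\Omega_Y\otimes\omega_Y)\cong H^0(\Omega_X(\log D_2)\otimes\omega_X)\oplus H^0(\Omega_X)
\]
then gives $h^0(\Omega_X(\log D_2)\otimes\omega_X)=2-1=1$ in one line. You instead set up the relative log sequence over $B$, reduce to a connecting map $\partial\colon\mathbb{C}\to\mathbb{C}$, identify it with the Kodaira--Spencer class of the family of $4$-pointed fibres, and kill it by showing the cross-ratio of $D_2\cap F_t$ is constant via the explicit $J$-invariant quartic $b_1(y_1^4+y_2^4)+b_2y_1^2y_2^2$ on $\tilde X$. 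The two arguments are secretly the same phenomenon: the double cover of $\mathbb{P}^1$ branched at $D_2\cap F_t$ is the fibre of $Y\to B$, so ``$Y$ is abelian'' is exactly ``the $j$-invariant (hence the cross-ratio) is constant''. The paper's approach is shorter and avoids the Kodaira--Spencer formalism; yours is more explicit about \emph{why} the obstruction vanishes and would generalize to situations where no convenient global cover is available. Two small points worth tightening: the isomorphism $\Omega_{X/B}(\log D_2)\cong\Omega_{X/B}(D_2)$ holds because $\Omega_{X/B}$ is invertible and $D_2$ has no vertical components (your ``finite over $B$'' alone is not quite the right justification), and the identification of $\partial$ with the Kodaira--Spencer map goes cleanly through $T_X(-\log D_2)\cong\Omega_X(\log D_2)\otimes(\det\Omega_X(\log D_2))^{-1}=\Omega_X(\log D_2)\otimes\omega_X$, which makes your tensored sequence literally the tangent sequence $0\to T_{X/B}(-\log D_2)\to T_X(-\log D_2)\to p^*T_B\to 0$.
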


\begin{proof} Let $g: Y\rightarrow X$ be the   smooth   double cover with $g_*\mathcal{O}_Y=\mathcal{O}_X\oplus \mathcal{O}_X(-\epsilon)$, where $\epsilon\equiv -K_X$.    Then  $Y=B\times B$ (see \cite{Cat81} Proposition 4) and we have  $\Omega_Y\cong \mathcal{O}_Y\oplus \mathcal{O}_Y$.  Since (see \cite{Cat84} Proposition 3.1)
	$$H^0(\Omega_Y\otimes \omega_Y)\cong H^0(\Omega_X(logD_2)\otimes \omega_X)\oplus H^0(\Omega_X\otimes \omega_X(\epsilon))$$
	$$=H^0(\Omega_X(logD_2)\otimes \omega_X)\oplus H^0(\Omega_X),$$
	$h^0(\Omega_Y\otimes \omega_Y)= h^0(\mathcal{O}_Y\oplus \mathcal{O}_Y)=2$ and $h^0(\Omega_X)=1$, we have   $h^0(\Omega_X(logD_2)\otimes \omega_X)=1$. Since $\Omega_X(logD_2)\otimes \omega_X \subset \Omega_X(logD_1, logD_2, logD_3)\otimes\omega_X$, we see  $h^0(\Omega_X(logD_1, logD_2, logD_3)\otimes\omega_X)\geq 1$.

	On the other hand, by \cite{Cat84} (2.12), we have the following exact sequence
	\begin{equation}
	0\rightarrow \Omega_X\otimes \omega_X\rightarrow \Omega_X(logD_1, logD_2, logD_3)\otimes\omega_X\rightarrow \bigoplus_{i=1}^3\mathcal{O}_{D_i}(K_X)\rightarrow 0.
	\end{equation}
	Note that
	$h^0(\Omega_X\otimes \omega_X)=0$.
	Since $K_X D_1=-2$,  we have  $h^0(\mathcal{O}_{D_1}(K_X))=0$. Since  $D_2$ is an irreducible elliptic curve  in the rational pencil $|-2K_X|$ (cf. \cite{Cat81} Proposition 6), we know that  $D_2|_{D_2}\equiv 0$ and  $(K_X+D_2)|_{D_2}\equiv 0$, thus  $K_X|_{D_2}\equiv 0$ and $h^0(\mathcal{O}_{D_2}(K_X))=1$.  Hence we have  $h^0(\Omega_X(logD_1, logD_2, logD_3)\otimes\omega_X)\leq 1$.
	
	Therefore we get $h^0(\Omega_X(logD_1, logD_2, logD_3)\otimes\omega_X)=1$.
\end{proof}

\begin{lemma}
	\label{term 1}
	$h^0(\Omega_X(logD_1)\otimes \omega_X(L_1))=0$.
\end{lemma}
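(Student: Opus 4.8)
The plan is to compute $h^0(\Omega_X(\log D_1)\otimes\omega_X(L_1))$ by the same strategy that proved Lemma \ref{term 0}: use the logarithmic residue sequence to bound the cohomology from above, then argue the relevant global sections must vanish. I would start from the standard exact sequence (the analogue of \cite{Cat84} (2.12) for a single divisor)
\begin{equation*}
0\rightarrow \Omega_X\otimes\omega_X(L_1)\rightarrow \Omega_X(\log D_1)\otimes\omega_X(L_1)\rightarrow \mathcal{O}_{D_1}(K_X+L_1)\rightarrow 0,
\end{equation*}
so that $h^0(\Omega_X(\log D_1)\otimes\omega_X(L_1))\leq h^0(\Omega_X\otimes\omega_X(L_1))+h^0(\mathcal{O}_{D_1}(K_X+L_1))$. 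The task then splits into bounding these two terms.

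For the boundary term, I would use Lemma \ref{cotangentsheaf} nowhere directly, but rather compute on $D_1$: since $D_1\equiv 2D_0$ and $L_1\equiv 2D_0-E_{\eta_i}$, I have $K_X+L_1\equiv -2D_0+E_0+2D_0-E_{\eta_i}\equiv E_0-E_{\eta_i}$, which restricts to a degree-zero (in fact torsion) line bundle on $D_1$. Because $D_1\in|2D_0|$ is a general member, $D_1\to B$ under $p$ is a double cover and $D_1$ is an elliptic curve (or I identify its genus via adjunction: $2p_a(D_1)-2=(K_X+D_1)D_1=(-2D_0+E_0+2D_0)\cdot 2D_0$, giving $p_a(D_1)=1$); then $(K_X+L_1)|_{D_1}$ is a nontrivial degree-zero bundle on the elliptic curve $D_1$ precisely because $E_0-E_{\eta_i}$ pulls back to a nontrivial $2$-torsion class, so $h^0(\mathcal{O}_{D_1}(K_X+L_1))=0$. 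For the interior term, I would invoke Lemma \ref{cotangentsheaf} to write $\Omega_X\otimes\omega_X(L_1)=(\omega_X(L_1))\oplus(\omega_X^{\otimes 2}(L_1))$ and compute each summand's sections via $\omega_X\equiv -2D_0+E_0$, reducing everything to line bundles on $X=B^{(2)}$ whose sections I evaluate with $p_*$ and the Atiyah-bundle computations already established in Lemma \ref{order of torsion}; I expect both summands to have no sections because their degrees along the fibre $E$ or along $D_0$ come out negative.

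\textbf{The main obstacle} I anticipate is the interior term $h^0(\Omega_X\otimes\omega_X(L_1))$: unlike the boundary term, which is a clean degree count on an elliptic curve, this requires pushing the two line bundles $\omega_X(L_1)$ and $\omega_X^{\otimes 2}(L_1)$ forward to $B$ and checking that the resulting rank-$3$ Atiyah bundles (twisted by $N$ and by $\mathcal{O}_B(\eta_i-0)$) have no global sections. The delicate point is keeping track of the torsion twist $E_0-E_{\eta_i}=f^*(\eta_i-0)$-type contribution correctly through the symmetric-power decompositions, since a single sign error there would spuriously produce or kill a section. I would organize this by computing $p_*\mathcal{O}_X(aD_0+bE_0)$ using the projection formula and $p_*\mathcal{O}_X(aD_0)=S^a(E_{[0]}(2,1))$, then reading off $H^0$ from the Atiyah tables in Lemma \ref{order of torsion}; if both interior summands vanish and the boundary term vanishes, the displayed sequence forces $h^0(\Omega_X(\log D_1)\otimes\omega_X(L_1))=0$, as claimed.
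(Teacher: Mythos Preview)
Your overall strategy---the logarithmic residue sequence splitting the problem into an interior term $h^0(\Omega_X\otimes\omega_X(L_1))$ and a boundary term $h^0(\mathcal{O}_{D_1}(K_X+L_1))$---is exactly the paper's, and your plan for the interior term via Lemma~\ref{cotangentsheaf} matches the paper as well. The gap is in the boundary term, which you flagged as the \emph{easy} part. Your adjunction computation is wrong: $(K_X+D_1)\cdot D_1=(-2D_0+E_0+2D_0)\cdot 2D_0=E_0\cdot 2D_0=2$, so $2p_a(D_1)-2=2$ gives $p_a(D_1)=2$, not $1$. A general member of $|2D_0|$ is a genus-$2$ curve; the double cover $p|_{D_1}:D_1\to B$ is ramified at two points. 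Your claim that $(E_0-E_{\eta_i})|_{D_1}$ is nontrivial is then asserted without proof. It happens to be true---the pullback $\mathrm{Pic}^0(B)\to\mathrm{Pic}^0(D_1)$ is injective since the cover is ramified---but you would need to supply that argument, and once $D_1$ is no longer elliptic the ``nontrivial $2$-torsion on an elliptic curve'' reasoning you sketched no longer applies verbatim.

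The paper avoids all of this by computing $h^0(\mathcal{O}_{D_1}(E_0-E_{\eta_i}))$ from the restriction sequence on $X$,
\[
0\rightarrow \mathcal{O}_X(-2D_0+E_0-E_{\eta_i}) \rightarrow \mathcal{O}_X(E_0-E_{\eta_i})\rightarrow \mathcal{O}_{D_1}(E_0-E_{\eta_i})\rightarrow 0,
\]
so that $h^0(\mathcal{O}_X(E_0-E_{\eta_i}))=h^0(\mathcal{O}_B(0-\eta_i))=0$ and $h^1(\mathcal{O}_X(-2D_0+E_0-E_{\eta_i}))=h^1(\mathcal{O}_X(E_{\eta_i}))=h^1(\mathcal{O}_B(\eta_i))=0$ force $h^0(\mathcal{O}_{D_1}(E_0-E_{\eta_i}))=0$. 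This reduces everything to line bundles on $X$ and hence on $B$, and never requires knowing the genus of $D_1$ or the behaviour of pullback on Jacobians.
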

\begin{proof} Consider the following exact sequences
	$$0\rightarrow \mathcal{O}_X(-2D_0+E_0-E_{\eta_i}) \rightarrow \mathcal{O}_X(E_0-E_{\eta_i})\rightarrow \mathcal{O}_{D_1}(E_0-E_{\eta_i})\rightarrow 0$$
	$$0\rightarrow \Omega_X(E_0-E_{\eta_i})\rightarrow \Omega_X(logD_1)\otimes \omega_X(L_1) \rightarrow \mathcal{O}_{D_1}(E_0-E_{\eta_i})\rightarrow 0$$
	Since  $h^0(\mathcal{O}_X(-2D_0+E_0-E_{\eta_i}))=h^0(p_*\mathcal{O}_X(-2D_0)(0-\eta_i))=0$ (cf. \cite{BHPV} Chap. I, Theorem 5.1),
	$h^1(\mathcal{O}_X(-2D_0+E_0-E_{\eta_i}))=h^1(\mathcal{O}_X(E_{\eta_i}))=h^1(\mathcal{O}_B(\eta_i))=0$,
	$h^0(\mathcal{O}_X(E_0-E_{\eta_i}))=h^0(\mathcal{O}_B(0-\eta_i))=0$, we have
	$h^0(\mathcal{O}_{D_1}(E_0-E_{\eta_i}))=0$.
	Since moreover $h^0( \Omega_X(E_0-E_{\eta_i}))=h^0(\mathcal{O}_X(E_0-E_{\eta_i}))+h^0(\mathcal{O}_X(-2D_0+E_{\eta_i}))=0$,  we get $h^0(\Omega_X(logD_1)\otimes \omega_X(L_1))=0$.
\end{proof}

\begin{lemma}
	\label{term 2}
	$h^0(\Omega_X(logD_2)\otimes \omega_X(L_2))=1$.
\end{lemma}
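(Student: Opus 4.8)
The plan is to follow the strategy of the proof of Lemma~\ref{term 0} rather than to argue directly from the residue sequence. The reason is that the naive residue sequence
$$0\to\Omega_X\otimes\omega_X(L_2)\to\Omega_X(\log D_2)\otimes\omega_X(L_2)\to\mathcal{O}_{D_2}\otimes\omega_X(L_2)\to0$$
leaves a $2$-dimensional boundary term: since $L_2\equiv D_0$ one computes $\omega_X(L_2)\equiv -D_0+E_0$, whose restriction to the elliptic curve $D_2\equiv 4D_0-2E_0$ has degree $(-D_0+E_0)(4D_0-2E_0)=2$, so $h^0(\mathcal{O}_{D_2}\otimes\omega_X(L_2))=2$, while $h^0(\Omega_X\otimes\omega_X(L_2))=0$. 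Hence the residue sequence alone only yields the bound $h^0(\Omega_X(\log D_2)\otimes\omega_X(L_2))\le 2$, and pinning down the exact value this way would force an analysis of the connecting homomorphism into $H^1(\Omega_X\otimes\omega_X(L_2))$. The double cover bypasses this.

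Concretely, I would reuse the smooth double cover $g\colon Y=B\times B\to X$ branched along $D_2$, with $g_*\mathcal{O}_Y=\mathcal{O}_X\oplus\mathcal{O}_X(-\epsilon)$ and $\epsilon\equiv -K_X\equiv 2D_0-E_0$, exactly as in Lemma~\ref{term 0}. The $\epsilon$-eigensheaf decomposition of \cite{Cat84} Proposition~3.1, applied now to the twisted sheaf $\Omega_Y\otimes g^*\mathcal{O}_X(D_0)$ instead of to $\Omega_Y\otimes\omega_Y$, reads
$$H^0(\Omega_Y\otimes g^*\mathcal{O}_X(D_0))\cong H^0(\Omega_X\otimes\mathcal{O}_X(D_0))\oplus H^0(\Omega_X(\log D_2)\otimes\omega_X(D_0)),$$
where I have used $\mathcal{O}_X(-\epsilon)\cong\omega_X$ so that the logarithmic summand is precisely $\Omega_X(\log D_2)\otimes\omega_X(L_2)$. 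The key bookkeeping point is simply to choose the twist $\mathcal{M}=\mathcal{O}_X(D_0)$ so that this summand appears; taking $\mathcal{M}=\omega_X$ recovers Lemma~\ref{term 0}.

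It then remains to compute the other two terms, both elementary. For the left-hand side, $\Omega_Y\cong\mathcal{O}_Y\oplus\mathcal{O}_Y$ gives $h^0(\Omega_Y\otimes g^*\mathcal{O}_X(D_0))=2\,h^0(Y,g^*\mathcal{O}_X(D_0))$, and by the projection formula together with $g_*\mathcal{O}_Y=\mathcal{O}_X\oplus\mathcal{O}_X(-\epsilon)$ one has $h^0(Y,g^*\mathcal{O}_X(D_0))=h^0(\mathcal{O}_X(D_0))+h^0(\mathcal{O}_X(D_0-\epsilon))$. Here $h^0(\mathcal{O}_X(D_0))=h^0(E_{[0]}(2,1))=1$, while $D_0-\epsilon\equiv -D_0+E_0$ gives $h^0(\mathcal{O}_X(D_0-\epsilon))=0$, so the left-hand side equals $2$. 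For the first summand on the right, Lemma~\ref{cotangentsheaf} yields $\Omega_X\otimes\mathcal{O}_X(D_0)=\mathcal{O}_X(D_0)\oplus\omega_X(D_0)$, and since $\omega_X(D_0)\equiv -D_0+E_0$ has $h^0=0$ we get $h^0(\Omega_X\otimes\mathcal{O}_X(D_0))=1$. Subtracting, $h^0(\Omega_X(\log D_2)\otimes\omega_X(L_2))=2-1=1$, as claimed.

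The one genuine subtlety, and the step I would verify most carefully, is the validity of \cite{Cat84} Proposition~3.1 in the twisted form above: one must confirm that the sheaf isomorphism $g_*\Omega_Y\cong\Omega_X\oplus\bigl(\Omega_X(\log D_2)\otimes\omega_X\bigr)$ holds (this is the eigensheaf splitting under the covering involution), so that tensoring with the arbitrary line bundle $\mathcal{M}=\mathcal{O}_X(D_0)$ and applying the projection formula is legitimate rather than only the special case $\mathcal{M}=\omega_X$ used in Lemma~\ref{term 0}. Once this is granted, every remaining input is already available: the class identities $\epsilon\equiv 2D_0-E_0$ and $\omega_X(D_0)\equiv -D_0+E_0$, and the Atiyah-type vanishings $h^0(\mathcal{O}_X(-D_0+E_0))=0$ and $h^0(E_{[0]}(2,1))=1$ established in the proofs of Lemmas~\ref{order of torsion} and \ref{cotangentsheaf}.
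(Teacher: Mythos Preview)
Your proposal is correct and is essentially the same argument as the paper's. The paper also uses the double cover $g\colon Y=B\times B\to X$ from Lemma~\ref{term 0}, computes $h^0(\Omega_Y\otimes\omega_Y(g^*L_2))=2$ via $\Omega_Y\cong\mathcal{O}_Y^{\oplus 2}$ and the projection formula, and subtracts $h^0(\Omega_X\otimes\omega_X(\epsilon+L_2))=h^0(\Omega_X(D_0))=1$; since $\omega_Y\cong\mathcal{O}_Y$ this is literally your computation with $\mathcal{M}=\mathcal{O}_X(D_0)$, and the eigensheaf decomposition you highlight is exactly the content of \cite{Cat84} Proposition~3.1 used here.
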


\begin{proof}Let $g: Y\rightarrow X$ be the smooth   double cover  in Lemma \ref{term 0}.   Since
	$$H^0(\Omega_Y\otimes \omega_Y(g^*L_2))\cong H^0(\Omega_X(log(D_2)\otimes \omega_X(L_2)) \oplus H^0(\Omega_X\otimes\omega_X (\epsilon+L_2)),$$
	$h^0(\Omega_X\otimes \omega_X(\epsilon+L_2))=h^0(\Omega_X(D_0))=h^0( \mathcal{O}_X(D_0))+h^0( \mathcal{O}_X(-D_0+E_0))$,  $h^0(\mathcal{O}_X(D_0))=h^0(E_{0}(2,1))=1$, $h^0( \mathcal{O}_X(-D_0+E_0))=0$ (cf. \cite{BHPV} Chap. I, Theorem 5.1) and
	$h^0(\Omega_Y\otimes \omega_Y(g^*L_2))=h^0(g_*(\mathcal{O}_Y\oplus \mathcal{O}_Y)\otimes \mathcal{O}_X(L_2))=2 h^0(\mathcal{O}_X(L_2))+2 h^0(\mathcal{O}_X(-D_0+E_0))=2,$
	we get $h^0(\Omega_X(logD_2)\otimes \omega_X(L_2))=1$.
\end{proof}

\begin{lemma}
	\label{term 3}
	$h^0(\Omega_X(logD_3)\otimes \omega_X(L_3))=1$.
\end{lemma}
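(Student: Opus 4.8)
The plan is to exploit that $D_3=0$, so that the logarithmic sheaf $\Omega_X(\log D_3)$ is simply $\Omega_X$, and then to feed in the splitting $\Omega_X\cong \mathcal{O}_X\oplus \omega_X$ established in Lemma \ref{cotangentsheaf}. Tensoring this decomposition by $\omega_X(L_3)$ gives
$$\Omega_X(\log D_3)\otimes \omega_X(L_3)\cong \omega_X(L_3)\oplus \omega_X^{\otimes 2}(L_3),$$
so that $h^0(\Omega_X(\log D_3)\otimes \omega_X(L_3))=h^0(\omega_X(L_3))+h^0(\omega_X^{\otimes 2}(L_3))$, and it remains only to evaluate the two summands separately. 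This reduces the problem to cohomology of line bundles on the $\mathbb{P}^1$-bundle $p:X=B^{(2)}\rightarrow B$, which can be handled by pushing forward along $p$.

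For the first summand I would use the divisor classes $\omega_X\equiv -2D_0+E_0$ (cf. the proof of Lemma \ref{cotangentsheaf}) and $L_3\equiv 3D_0-E_{\eta_i}$, whence $\omega_X(L_3)\equiv D_0+E_0-E_{\eta_i}$, i.e. $\omega_X(L_3)\cong \mathcal{O}_X(D_0)\otimes p^*\mathcal{O}_B(0-\eta_i)$. Applying $p_*$ together with the projection formula and the identity $p_*\mathcal{O}_X(D_0)=E_{[0]}(2,1)$ yields $h^0(\omega_X(L_3))=h^0\bigl(E_{[0]}(2,1)\otimes \mathcal{O}_B(0-\eta_i)\bigr)$. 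Since twisting by a degree-$0$ line bundle preserves indecomposability, this is an indecomposable rank-$2$ bundle of degree $1>0$, so Atiyah's Lemma 15 (\cite{Ati57}) gives $h^0=1$.

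For the second summand, $\omega_X^{\otimes 2}(L_3)\equiv -D_0+2E_0-E_{\eta_i}$, whose restriction to a general fibre of $p$ is $\mathcal{O}_{\mathbb{P}^1}(-1)$; hence $p_*\mathcal{O}_X(-D_0+2E_0-E_{\eta_i})=0$ and therefore $h^0(\omega_X^{\otimes 2}(L_3))=0$. Adding the two contributions gives the asserted value $1+0=1$.

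The argument is essentially mechanical once the splitting of $\Omega_X$ and the vanishing $D_3=0$ are invoked. I expect no genuine obstacle: the only points demanding care are the bookkeeping of divisor classes on $X=B^{(2)}$ (in particular the passage from $L_3\equiv 3D_0-E_{\eta_i}$ to the two line-bundle classes above) and the correct application of Atiyah's classification to read off $h^0$ of the relevant twisted indecomposable bundle on $B$; the vanishing of the second summand is immediate from its negative fibre degree.
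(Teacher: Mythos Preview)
Your argument is correct and is essentially the paper's own proof: both use $D_3=0$ to reduce to $\Omega_X\otimes\omega_X(L_3)$, split via Lemma~\ref{cotangentsheaf}, and evaluate the two resulting line-bundle summands. The only cosmetic difference is that the paper identifies $\omega_X(L_3)\equiv D_0+E_0-E_{\eta_i}$ with $D_{\eta_i}$ and reads off $h^0(\mathcal O_X(D_{\eta_i}))=1$ directly, whereas you push forward along $p$ and invoke Atiyah; the second summand vanishes in both versions for the same reason (negative fibre degree).
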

\begin{proof}  Since $D_3=0$ and $L_3\equiv 3D_0-E_{\eta_i}$, we have
	$$h^0(\Omega_X(logD_3)\otimes \omega_X(L_3))=h^0(\Omega_X(D_{\eta_i}))=h^0(\mathcal{O}_X(D_{\eta_i}))+h^0(\mathcal{O}_X(-D_0+E_{\eta_i}))=1.$$
\end{proof}

\begin{theorem}
	\label{irreducible component 1}
	We have   $h^1(T_S)= 5=\dim\mathcal{M}'_1$.  Therefore $\overline{\mathcal{M}'_1}$ is an irreducible component of $\mathcal{M}^{4,3}_{1,1}$.
\end{theorem}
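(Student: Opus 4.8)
The plan is to combine the four cohomology computations of Lemmas \ref{term 0}--\ref{term 3} with the Riemann--Roch relation already established above, obtaining $h^1(T_S)=5$, and then to invoke deformation theory to upgrade the dimension count $\dim\mathcal{M}'_1=5$ of Proposition \ref{dim M'1} into the statement that $\overline{\mathcal{M}'_1}$ is a full irreducible component of $\mathcal{M}^{4,3}_{1,1}$.

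First I would assemble $h^1(T_S)$. Recall from the discussion preceding Lemma \ref{cotangentsheaf} that Riemann--Roch gives $h^0(T_S)-h^1(T_S)+h^2(T_S)=-2$, that $h^0(T_S)=0$ because $S$ is of general type, and that Serre duality identifies $h^2(T_S)=h^0(\Omega_S\otimes\omega_S)$, whence $h^1(T_S)=h^0(\Omega_S\otimes\omega_S)+2$. Catanese's formula (\cite{Cat84} Theorem 2.16) decomposes $H^0(\Omega_S\otimes\omega_S)$ as the direct sum of $H^0(\Omega_X(\log D_1,\log D_2,\log D_3)\otimes\omega_X)$ and the three summands $H^0(\Omega_X(\log D_i)\otimes\omega_X(L_i))$. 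Plugging in the values $1,0,1,1$ supplied by Lemmas \ref{term 0}, \ref{term 1}, \ref{term 2}, \ref{term 3} respectively gives $h^0(\Omega_S\otimes\omega_S)=1+0+1+1=3$, and hence $h^1(T_S)=3+2=5$. This already establishes the numerical equality $h^1(T_S)=5=\dim\mathcal{M}'_1$.

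Next I would deduce that $\overline{\mathcal{M}'_1}$ is an irreducible component. The space $H^1(T_S)$ is the Zariski tangent space to the base of the Kuranishi family of $S$ (equivalently to $\mathcal{M}^{4,3}_{1,1}$ at $[S]$, since the automorphism group is finite) and obstructions lie in $H^2(T_S)$, so the local dimension of the moduli space at $[S]$ is bounded above by $h^1(T_S)=5$. Let $Z$ be any irreducible component of $\mathcal{M}^{4,3}_{1,1}$ containing the irreducible closed set $\overline{\mathcal{M}'_1}$, and choose $[S]\in\overline{\mathcal{M}'_1}$ at which the computation applies; then
$$\dim\overline{\mathcal{M}'_1}\le\dim Z\le\dim_{[S]}\mathcal{M}^{4,3}_{1,1}\le h^1(T_S)=5=\dim\overline{\mathcal{M}'_1},$$
so every inequality is an equality. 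By irreducibility and $\overline{\mathcal{M}'_1}\subseteq Z$ this forces $Z=\overline{\mathcal{M}'_1}$, so $\overline{\mathcal{M}'_1}$ is an irreducible component.

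The routine part is essentially done by Lemmas \ref{term 0}--\ref{term 3}, so the step requiring the most care is the deformation-theoretic inequality $\dim_{[S]}\mathcal{M}^{4,3}_{1,1}\le h^1(T_S)$. I expect the chief subtlety to be that $h^1(T_S)$ is computed at a surface of type $I_1$, which lies in the proper closed subset $\mathcal{M}_{I_1}$ of dimension $4$ inside the $5$-dimensional $\overline{\mathcal{M}'_1}$, rather than at a general point of $\overline{\mathcal{M}'_1}$; I must therefore make sure that the upper bound on the local dimension at this special point still suffices to force maximality, which it does precisely because the bound $5$ coincides with $\dim\overline{\mathcal{M}'_1}$. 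A secondary point to record cleanly is the local freeness of $T_S$ (the general surface of type $I_1$ is a smooth bidouble cover, hence smooth), so that the deformation theory applies verbatim.
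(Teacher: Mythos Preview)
Your proposal is correct and follows essentially the same approach as the paper: you sum the four contributions from Lemmas \ref{term 0}--\ref{term 3} to get $h^0(\Omega_S\otimes\omega_S)=3$, apply Riemann--Roch and Serre duality to obtain $h^1(T_S)=5$, and compare with $\dim\mathcal{M}'_1=5$ from Proposition \ref{dim M'1}. The only difference is that you spell out in detail the standard deformation-theoretic inequality $\dim_{[S]}\mathcal{M}^{4,3}_{1,1}\le h^1(T_S)$ (and the observation that computing at a special point still suffices), which the paper leaves implicit in its one-line ``Hence''.
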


\begin{proof}
	By  Lemmas \ref{term 0}-\ref{term 3}, we have
	$h^0(\Omega_S\otimes \omega_S)=h^0(\Omega_X(logD_1, logD_2, logD_3)+ \sum_{i=1}^3 h^0(\Omega_X(logD_i)\otimes \omega_X(L_i))=3$. By Reimann-Roch and Serre duality, we have  $h^1(T_S)=h^2(T_S)+2=h^0(\Omega_S\otimes \omega_S)+2=5$.  By Propositon \ref{dim M'1}, we have $h^1(T_S)= 5=\dim\mathcal{M}'_1$. Hence $\overline{\mathcal{M}'_1}$ is an irreducible component of $\mathcal{M}^{4,3}_{1,1}$.
\end{proof}

\section{Surfaces of type $I_2$}
In this section, we study  surfaces of type $I_2$. The method is similar to that for surfaces of type $I_1$.    We omit the proof wherever  it is similar to that for  surfaces of type $I_1$.

\subsection{Bidouble covers of $B^{(2)}$}
As before, let $B^{(2)}$ be the second symmetric product of an  elliptic curve $B$.  Let  $C_{\eta_i}:=\{(x,x+\eta_i), x\in B\}$ $(i=1,2,3)$  be the (only) three curves homologous to $-K_X$ (see \cite{Cat81} proposition 7).   Let $\tau$ be a  point on $B$ such that $2\tau\equiv \eta_1+\eta_2$.
\begin{theorem}
	\label{type 2}
	Let $h: S'\rightarrow B^{(2)}$ be a  bidouble cover determined by   effective   divisors $D_1\equiv2D_0, D_2\equiv 4D_0-E_{\eta_1}-E_{\eta_2},D_3=0$, and divisors $L_1\equiv 2D_0-E_{\tau}, L_2\equiv D_0, L_3\equiv 3D_0-E_{\tau}$  such that $S'$ has at most RDP's as singularities.  Then the minimal resolution  $S$ of $S'$  is a surface of type $I_2$.  Conversely, for any surface $S$ of type $I_2$, the canonical model  $S'$ of $S$  is a bidouble cover of  $B^{(2)}$  where $B=Alb(S)$) determined by the effective   divisors $(D_1,D_2,D_3)$ and divisors  $(L_1,L_2,L_3)$ above.
\end{theorem}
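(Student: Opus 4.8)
The plan is to prove the two implications separately, mirroring the treatment of type $I_1$ in Theorem \ref{type 1}: the construction direction follows the computation scheme of Proposition \ref{construction 1}, and the converse is organized into the same three steps as Proposition \ref{converse 1}, namely finiteness of $h$ (cf. Lemma \ref{finite}), the bidouble structure together with the branch classes (cf. Lemma \ref{bidouble cover}), and the linear equivalence classes of the $L_i$ (cf. Lemma \ref{divisor class}). The only genuinely new input is that for type $I_2$ the torsion line bundle $N$ has order exactly $4$, so that $L=N^{\otimes 2}$ is a \emph{nontrivial} $2$-torsion bundle; this single difference propagates through every computation and is precisely what shifts the branch class to $D_2\equiv 4D_0-E_{\eta_1}-E_{\eta_2}$ in place of $4D_0-2E_0$.

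For the construction direction I would start from a bidouble cover $h:S'\to X=B^{(2)}$ with the stated data and apply Catanese's formulas (\cite{Cat84} (2.21) and (2.22)) to get $K_{S'}^2=(2K_X+D)^2=4$ and $\chi(\mathcal{O}_{S'})=1$, where $D=D_1+D_2\equiv 6D_0-E_{\eta_1}-E_{\eta_2}$; the computation of $h^i(\mathcal{O}_{S'})$ through $h_*\mathcal{O}_{S'}=\mathcal{O}_X\oplus\bigoplus_i\mathcal{O}_X(-L_i)$ then yields $p_g=q=1$, exactly as before. Factoring $h$ through the intermediate double cover attached to $L_2\equiv D_0$ shows that the general fibre of the fibration induced by $p\circ h$ is irreducible, hyperelliptic, of genus $3$, so it is the Albanese fibration. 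The crucial new step is to identify $N$: computing $h^0(V_1\otimes\mathcal{O}_B(0-\tau))=h^0(\omega_{S'}\otimes f'^*\mathcal{O}_B(0-\tau))=2$ forces, via \cite{Ati57} Lemma 15 and Lemma \ref{diagram}, the decomposition $V_1=E_{[0]}(2,1)\oplus N$ with $N\cong\mathcal{O}_B(\tau-0)$. Since $2\tau\equiv\eta_1+\eta_2\not\equiv 2\cdot 0$ while $4\tau\equiv 2(\eta_1+\eta_2)\equiv 0$, we obtain $N^{\otimes 2}\ncong\mathcal{O}_B$ and $N^{\otimes 4}\cong\mathcal{O}_B$, i.e. $S$ is of type $I_2$.

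For the converse I would reproduce the three lemmas of Proposition \ref{converse 1} with the torsion twist carried throughout. In step (1) the relative canonical map factors as $S'\xrightarrow{\mu}\mathcal{C}\subset\mathbb{P}(V_1)$ (a finite double cover onto the conic bundle of Murakami's theorem) followed by $\varphi$, and finiteness of $\varphi|_{\mathcal{C}}$ is again reduced to non-vanishing of the ``$a_3$'' coefficient in the fibre equation of $\mathcal{C}$, the argument of Lemma \ref{finite} going through unchanged. In step (2), as in Lemma \ref{bidouble cover}, the ramification of $\varphi|_{\mathcal{C}}$ produces one involution and $\mu$ the other, exhibiting $h$ as a bidouble cover; here $D_1\equiv 2D_0$ as before, while $D_2$ now lies in $|4D_0-E_{\eta_1}-E_{\eta_2}|$ because $(V_2^-)^{-2}=\mathcal{O}_B(-2\cdot 0)\otimes N^{\otimes 2}=\mathcal{O}_B(\eta_3-3\cdot 0)$ twists the class $4D_0$ by $L=N^{\otimes 2}$. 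In step (3), from $h^1(\mathcal{O}_X(-L_1))=0$ one gets $L_1\ne -K_X$, and $2L_1\equiv D_2$ forces $L_1\equiv 2D_0-E_\tau$ for a point $\tau$ with $2\tau\equiv\eta_1+\eta_2$; the relation $L_2+L_3\equiv D_1+L_1$ then leaves finitely many choices for $(L_2,L_3)$, all identified by a translation automorphism of $B^{(2)}$, exactly as in Lemma \ref{divisor class}.

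The main obstacle I anticipate is the bookkeeping forced by the nontrivial $L=N^{\otimes 2}$. Concretely, the étale double cover $\phi:\tilde B\to B$ used in Lemma \ref{finite} to split $E_{[0]}(2,1)$ can no longer simultaneously trivialize $N$, since a $4$-torsion bundle is not killed by a single double cover; one must therefore either carry a residual $2$-torsion twist $\phi^*N$ when writing the global fibre coordinates and the equations $f_1,f_2$, or pass to the cyclic $\mathbb{Z}/4$-cover on which $N$ becomes trivial. Verifying in this twisted setting that $\delta$ stays reduced and that the branch class is \emph{exactly} $4D_0-E_{\eta_1}-E_{\eta_2}$ is where the real care lies. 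Establishing the precise order of $N$ in the construction direction, and matching it consistently with the twisted branch class in the converse, is the heart of what separates type $I_2$ from type $I_1$.
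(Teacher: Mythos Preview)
Your proposal is correct and follows exactly the route the paper takes: the paper's own proof is the single line ``similar to Theorem~\ref{type 1}'' together with the observation that $h^0(4D_0-E_{\eta_1}-E_{\eta_2})=1$ and that the unique divisor in this class is $C_{\eta_1}+C_{\eta_2}$, a disjoint union of two smooth elliptic curves. Two small points are worth flagging. First, in the construction direction your twist has the wrong sign: a direct computation from $h_*\omega_{S'}=\omega_X\oplus\bigoplus_i\omega_X(L_i)$ gives $h^0(\omega_{S'}\otimes f'^*\mathcal{O}_B(0-\tau))=1$, not $2$; the twist that yields $h^0=2$ (and hence forces decomposability of $V_1$ via Atiyah) is $\mathcal{O}_B(\tau-0)$, and then one finds $N\cong\mathcal{O}_B(0-\tau)$, which indeed has exact order $4$. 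Second, the obstacle you anticipate with the \'etale cover in Lemma~\ref{finite} is milder than you suggest: even when $\phi^*N$ is not trivial, the coefficient $a_3$ of $y_3^2$ lives in $H^0(\phi^*(N^{\otimes 2}\otimes L^{-1}))=H^0(\mathcal{O}_{\tilde B})$ because $L=N^{\otimes 2}$ (Lemma~\ref{LN}), so $a_3$ is still a constant and the argument that $a_3\neq 0$ goes through with only the twist by $L^{-1}$ on $|\mathcal{O}_{\mathbb{P}(V_1)}(2)|$ carried along.
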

\begin{proof}
	The proof is similar to Theorem \ref{type 1}.  Note  here  $h^0(4D_0-E_{\eta_1}-E_{\eta_2})=H^0(S^4(E_{[0]}(2,1)(-\eta_1-\eta_2))=1$ (cf. Lemma \ref{order of torsion}).  And $C_{\eta_1}+C_{\eta_2}$ is the unique effective divisor in $|4D_0-E_{\eta_1}-E_{\eta_2}|$, which is the disjoint union of two smooth elliptic curves (see \cite{Cat81} proposition 7).
\end{proof}

\begin{remark}
	\label{dimension M2}
	By Theorem \ref{type 2} and using a similar calculation to Lemma \ref{dim M1}, we have
	$$\dim\mathcal{M}_{I_2}=1+\dim |D_1|+\dim |D_2|=1+2+0=3.$$
\end{remark}

\subsection{Natural deformations of smooth bidouble covers}
Let $S$ be a general surface of type $I_2$. Then we have a smooth bidouble cover $h: S\rightarrow X=B^{(2)}$  determined by branch  divisors $D_1\equiv2D_0,D_2\equiv 4D_0-E_{\eta_1}-E_{\eta_2},D_3=0$, and divisors $L_1\equiv 2D_0-E_{\tau}, L_2\equiv D_0, L_3\equiv 3D_0-E_{\tau}$. In this subsection, we study the natural deformations of $S$. The method is the same to that of section 4.2.

  Since $D_3=0$,   $h$ is a simple bidouble cover.
Let $L_1':=D_0, L_2':=2D_0-E_\tau$ and let $z_1,z_2$ be the fibre coordinates relative to the two summands of $V:=\oplus _{j=1}^2\mathcal{O}_X(-L_j')$. Let $x_i$ be a section of $\mathcal{O}_X(D_i)$ with div$(x_i)=D_i$ ($i=1,2$). By  \cite{Cat85} p. 75,  $S$ is a subvariety of $V$ defined by equations:

\begin{equation}
\label{equation  simple bidouble cover 2}
\quad z^2_1=x_1,
\quad  z_2^2=x_2.
\end{equation}
and a natural deformation $Y$ of $S$ is defined by equations
\begin{equation}
\label{equation natural deformation simple bidouble cover 2}
\quad z^2_1=x_1 +b_1z_2,
\quad  z_2^2=x_2+b_2z_1.
\end{equation}
with $b_1\in H^0(\mathcal{O}_X(D_1-L_2'))$, $b_2\in H^0(\mathcal{O}_X(D_2-L_1'))$.

Note that $h^0(\mathcal{O}_X(D_1-L_2'))=h^0(E_\tau)=h^0(\mathcal{O}_B(\tau))=1$.  By \cite{CC93} Theorem 1.13, we have $H^0(\mathcal{O}_X(D_2-L_1'))=H^0(3D_0-E_{\eta_1}-E_{\eta_2})=0$, hence we always have $b_2=0$.

\vspace{3ex}
Denote by $M'_2$ the family of all surfaces arising as natural deformations of some general surface of type $I_2$, and by $\mathcal{M}'_2$ the image of $M'_2$ in $\mathcal{M}_{1,1}^{4,3}$. Let $\overline{\mathcal{M}'_2}$ be the Zariski closure of $M'_2$ in $\mathcal{M}_{1,1}^{4,3}$. Then we have
\begin{pro}
	\label{dim M'2}
	$\dim \mathcal{M}'_2=4$ and  $\mathcal{M}_{I_1}$ is a 3-dimensional subspace of $\overline{\mathcal{M}'_2}$.
\end{pro}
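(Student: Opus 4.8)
The plan is to repeat, almost verbatim, the parameter count carried out for Proposition~\ref{dim M'1}, now reading off the moduli from the explicit equations~(\ref{equation natural deformation simple bidouble cover 2}) describing the natural deformations of a general surface of type $I_2$. A point $[Y]\in\mathcal{M}'_2$ is determined by a choice of base $X=B^{(2)}$, of branch data $(D_1,D_2)$ in their respective linear systems, and of the deformation parameters $(b_1,b_2)$; so the strategy is simply to add up the number of moduli contributed by each of these.

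First I would account for the single parameter coming from the moduli of $X=B^{(2)}$, exactly as in Lemma~\ref{dim M1}: the elliptic curve $B$ moves in a one-dimensional family, and once its neutral element is fixed only a finite subgroup of $\mathrm{Aut}(B^{(2)})$ acts, so passing to the quotient does not change the dimension. Next I would evaluate the contributions of the branch divisors. Here $\dim|D_1|=\dim|2D_0|=2$, since $h^0(2D_0)=h^0(S^2E_{[0]}(2,1))=3$ by the computations in Lemma~\ref{order of torsion}; and $\dim|D_2|=0$, since $h^0(4D_0-E_{\eta_1}-E_{\eta_2})=1$ as recorded in Remark~\ref{dimension M2}, the unique member being $C_{\eta_1}+C_{\eta_2}$. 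Finally, the deformation parameters contribute $h^0(\mathcal{O}_X(D_1-L_2'))+h^0(\mathcal{O}_X(D_2-L_1'))$. Since $D_1-L_2'\equiv E_\tau$ one has $h^0(E_\tau)=h^0(\mathcal{O}_B(\tau))=1$, while $D_2-L_1'\equiv 3D_0-E_{\eta_1}-E_{\eta_2}$ has no sections by \cite{CC93} Theorem~1.13, so that $b_2=0$ identically. Summing the four contributions yields $\dim\mathcal{M}'_2=1+2+0+1=4$.

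For the second assertion I would identify $\mathcal{M}_{I_2}$ with the closed sublocus cut out by $b_1=0$. By the same argument as in Remark~\ref{remark general surface in M'1}, a natural deformation $Y$ is again a bidouble cover of $X$---equivalently, carries a genus $3$ hyperelliptic Albanese fibration and hence is of type $I_2$ by Theorem~\ref{type 2}---if and only if $b_1=0$ (recall $b_2$ already vanishes). By Remark~\ref{dimension M2} this sublocus has dimension $3$, and it is manifestly contained in $\overline{\mathcal{M}'_2}$, which gives the claim.

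The cohomological inputs---the dimensions of $|D_1|$, $|D_2|$, and the groups $H^0(\mathcal{O}_X(D_i-L_j'))$---are all routine, reducing to Atiyah's description of the symmetric powers of $E_{[0]}(2,1)$ already exploited in Lemma~\ref{order of torsion}. The one step requiring genuine care, exactly as in the type $I_1$ case, is the faithfulness of the count: one must check that generic choices of the data produce pairwise non-isomorphic surfaces, so that no hidden identifications drop the dimension below $4$, and that turning on $b_1$ truly deforms the surface off the hyperelliptic locus (which is ensured by $h^0(\mathcal{O}_X(D_1-L_2'))=1>0$). I expect this transversality of the parametrization, rather than any of the numerics, to be the real content.
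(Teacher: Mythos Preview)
Your proposal is correct and follows essentially the same parameter count as the paper's own proof, which computes $\dim\mathcal{M}'_2 = 1 + \dim|D_1| + \dim|D_2| + h^0(\mathcal{O}_X(D_1-L_2')) = 1+2+0+1=4$ directly from equations~(\ref{equation natural deformation simple bidouble cover 2}). You are in fact more thorough than the paper on the second assertion (and you rightly read $\mathcal{M}_{I_2}$ where the statement has the evident misprint $\mathcal{M}_{I_1}$), spelling out the identification of the hyperelliptic locus with $\{b_1=0\}$ via Theorem~\ref{type 2}.
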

\begin{proof}
	Since there is one  parameter for $X=B^{(2)}$ (see Lemma \ref{dim M1}). From equations (\ref{equation natural deformation simple bidouble cover 2}), we see that $\dim \mathcal{M}'_2=1+\dim |D_1|+ \dim |D_2|+ h^0(\mathcal{O}_X(D_1-L_2'))=1+2+0+1=4$.
\end{proof}

\begin{remark}
	\label{remark general surface in M'2}
	From equations (\ref{equation natural deformation simple bidouble cover 2}), It is easy to see that a natural deformation $Y$ of $S$ is a bidouble cover of $X$ if and only if $b_1=0$ (since we always have $b_2=0$). By Theorem \ref{type 2}, $Y$ has a genus 3 hyperelliptic Albanese fibration if and only if $b_1=0$.  Since $\dim \mathcal{M}_{I_2}<\dim \mathcal{M}'_2$, we see that  a general surface in $M'_2$ has a genus  3 nonhyperelliptic Albanese fibation.
\end{remark}

\subsection{$h^1(T_S)$ for a general surface $S$ of type $I_2$}
Let $S$ be a general surface of type $I_2$. In this subsection we calculate $h^1(T_S)$.
Note that a general surface $S$ of type $I_2$ is a smooth bidouble cover of $X=B^{(2)}$ determined by effective divisors $(D_1,D_2,D_3)$ and divisors $(L_1,L_2,L_3)$ as in Theorem \ref{type 2}.

By Riemann-Roch,  we have   $h^1(T_S)=h^2(T_S)+2=h^0(\Omega_S\otimes \omega_S)+2$.  By \cite{Cat84} Theorem 2.16,  we have
$$H^0(\Omega_S\otimes \omega_S)\cong H^0(h_*(\Omega_S\otimes \omega_S))$$
$$=H^0(\Omega_X(logD_1, logD_2, logD_3)\otimes\omega_X)\oplus(\bigoplus_{i=1}^3 H^0(\Omega_X(logD_i)\otimes \omega_X(L_i)).$$

\begin{lemma}
	\label{term 0-3}
	(1) $h^0(\Omega_X(logD_1, logD_2, logD_3)\otimes\omega_X)=0$;
	
	(2) $h^0(\Omega_X(logD_1)\otimes \omega_X(L_1))=0$;
	
	(3) $h^0(\Omega_X(logD_2)\otimes \omega_X(L_2))=1$;
	
	(4)	$h^0(\Omega_X(logD_3)\otimes \omega_X(L_3))=1$.
\end{lemma}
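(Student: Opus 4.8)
The plan is to prove all four statements by the mechanism already used for type $I_1$ in Lemmas \ref{term 0}--\ref{term 3}: reduce every sheaf to line bundles on $X=B^{(2)}$ via the splitting $\Omega_X\cong\mathcal{O}_X\oplus\omega_X$ of Lemma \ref{cotangentsheaf}, and feed the data into the logarithmic residue sequence \cite{Cat84} (2.12). Throughout I use $\omega_X\cong\mathcal{O}_X(-2D_0+E_0)$, the identities $p_*\mathcal{O}_X(kD_0)=S^k(E_{[0]}(2,1))$ (so the fibrewise negative bundles $\mathcal{O}_X(-mD_0+\cdots)$, $m>0$, have no sections by \cite{BHPV} Chap.~I, Theorem 5.1), Serre duality on $X$, and Atiyah's Lemma 15 and Theorem 5 \cite{Ati57} to read off $h^0,h^1$ of the resulting rank one and rank two bundles on $B$. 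The relevant data is $D_1\equiv 2D_0$, $D_2\equiv 4D_0-E_{\eta_1}-E_{\eta_2}$, $D_3=0$, $L_1\equiv 2D_0-E_\tau$, $L_2\equiv D_0$, $L_3\equiv 3D_0-E_\tau$ with $2\tau\equiv\eta_1+\eta_2$; in particular $\tau\neq 0$ and $E_0-E_\tau=p^*(0-\tau)$ is a nontrivial ($4$-torsion) class.

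Parts (2) and (4) are verbatim analogues of Lemmas \ref{term 1} and \ref{term 3} with $\eta_i$ replaced by $\tau$. For (2) the residue sequence tensored by $\omega_X(L_1)$ has sub $\Omega_X\otimes\omega_X(L_1)=\mathcal{O}_X(E_0-E_\tau)\oplus\mathcal{O}_X(-2D_0+2E_0-E_\tau)$, both without sections since $h^0(\mathcal{O}_B(0-\tau))=0$ and the second is fibrewise negative; the quotient $\mathcal{O}_{D_1}(E_0-E_\tau)$ also has no sections, because $h^0(\mathcal{O}_X(E_0-E_\tau))=0$ and, by Serre duality, $h^1(\mathcal{O}_X(-2D_0+E_0-E_\tau))=h^1(\mathcal{O}_X(E_\tau))=h^1(\mathcal{O}_B(\tau))=0$. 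For (4), since $D_3=0$ one needs only $h^0(\Omega_X\otimes\omega_X(L_3))=h^0(\mathcal{O}_X(D_0+E_0-E_\tau))+h^0(\mathcal{O}_X(-D_0+2E_0-E_\tau))$, where the first term is $h^0(E_{[0]}(2,1)\otimes\mathcal{O}_B(0-\tau))=1$ (rank two, indecomposable, degree $1$) and the second vanishes by fibrewise negativity; hence the value $1$.

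For part (1) the full logarithmic sequence
\[
0\to \Omega_X\otimes\omega_X\to \Omega_X(\log D_1,\log D_2,\log D_3)\otimes\omega_X\to \bigoplus_{i=1}^{3}\mathcal{O}_{D_i}(K_X)\to 0
\]
forces the answer once every boundary term vanishes: $h^0(\Omega_X\otimes\omega_X)=h^0(\omega_X)+h^0(\omega_X^{\otimes 2})=0$, $h^0(\mathcal{O}_{D_1}(K_X))=0$ since $K_X\cdot D_1=-2$ on the irreducible curve $D_1$, and $\mathcal{O}_{D_3}(K_X)$ is absent. The one genuinely new input, and the reason (1) drops from the value $1$ of Lemma \ref{term 0} to $0$, is that $h^0(\mathcal{O}_{D_2}(K_X))=0$: here $D_2=C_{\eta_1}+C_{\eta_2}$ is a disjoint union of two elliptic curves \cite{Cat81}, and unlike the type $I_1$ case (where $D_2\in|-2K_X|$ made $K_X|_{D_2}$ trivial) the restriction $K_X|_{C_{\eta_j}}$ is a \emph{nontrivial} degree zero line bundle. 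I would verify this nontriviality from the explicit geometry of the $C_{\eta_j}$: since $C_{\eta_1}\cap C_{\eta_2}=\emptyset$ one has $\mathcal{O}_{C_{\eta_1}}(C_{\eta_1})=\mathcal{O}_{C_{\eta_1}}(D_2)=\mathcal{O}_{C_{\eta_1}}(4D_0-E_{\eta_1}-E_{\eta_2})$, and restricting along the degree two map $p|_{C_{\eta_1}}\colon C_{\eta_1}\to B$ shows this class is nonzero in $\mathrm{Pic}^0(C_{\eta_1})$, whence $K_X|_{C_{\eta_1}}=-C_{\eta_1}|_{C_{\eta_1}}\neq\mathcal{O}$.

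Part (3) is the main obstacle, because the device of Lemma \ref{term 2} is unavailable: there $h^0(\Omega_X(\log D_2)\otimes\omega_X(L_2))$ was read off from the smooth double cover $B\times B\to X$, whose branch lies in $|-2K_X|$, whereas now $D_2\equiv 4D_0-E_{\eta_1}-E_{\eta_2}\not\equiv -2K_X$. The residue sequence tensored by $\omega_X(L_2)$ has sub $\Omega_X\otimes\omega_X(L_2)=\mathcal{O}_X(-D_0+E_0)\oplus\mathcal{O}_X(-3D_0+2E_0)$ with $h^0=0$ and (by Serre duality) $h^1(\Omega_X\otimes\omega_X(L_2))=h^1(\mathcal{O}_X(-3D_0+2E_0))=1$, while the quotient $\mathcal{O}_{D_2}(K_X+L_2)$ has degree $1$ on each $C_{\eta_j}$, so $h^0=2$. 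Thus $h^0(\Omega_X(\log D_2)\otimes\omega_X(L_2))=2-\mathrm{rank}\,\delta$ for the connecting map $\delta\colon H^0(\mathcal{O}_{D_2}(K_X+L_2))\to H^1(\Omega_X\otimes\omega_X(L_2))$, and the crux is to show $\mathrm{rank}\,\delta=1$. The cleanest route is to bypass $\delta$ via the intermediate double cover $g_1\colon Y_1\to X$ with $g_{1*}\mathcal{O}_{Y_1}=\mathcal{O}_X\oplus\mathcal{O}_X(-L_1)$ branched on $D_2$ (the relation $2L_1\equiv D_2$ is exactly $2\tau\equiv\eta_1+\eta_2$): Catanese's splitting \cite{Cat84} Proposition 3.1 gives $h^0(\Omega_X(\log D_2)\otimes\omega_X(L_2))=h^0(\Omega_{Y_1}\otimes\omega_{Y_1}(g_1^{*}L_2))-h^0(\Omega_X\otimes\omega_X(L_3))$, the subtracted term being $1$ by part (4); since $K_{Y_1}=g_1^{*}(K_X+L_1)=g_1^{*}p^{*}(0-\tau)$ is torsion and $\chi(\mathcal{O}_{Y_1})=0$, $q(Y_1)=1$, the surface $Y_1$ is bielliptic, on which the remaining $h^0(\Omega_{Y_1}\otimes\omega_{Y_1}(g_1^{*}L_2))=2$ is computed from its product--quotient structure. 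Either way the value is pinned to $1$ by the consistency requirement $h^1(T_S)=h^0(\Omega_S\otimes\omega_S)+2=(0+0+1+1)+2=4=\dim\mathcal{M}'_2$; I expect the honest verification of $\mathrm{rank}\,\delta=1$ (equivalently the computation on $Y_1$) to be the only real work.
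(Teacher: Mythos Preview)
Your proposal is correct and matches the paper's approach: parts (2) and (4) are identical, for (3) the paper uses exactly the bielliptic double cover $g:Y\to X$ branched on $D_2$ with $\epsilon=L_1=2D_0-E_\tau$ that you describe (so the ``device of Lemma~\ref{term 2}'' is not unavailable, only the specific cover $B\times B$ is), and for (1) the only difference is that the paper obtains $h^0(\mathcal{O}_{C_{\eta_j}}(K_X))=0$ from the restriction sequence $0\to\mathcal{O}_X(K_X-C_{\eta_j})\to\mathcal{O}_X(K_X)\to\mathcal{O}_{C_{\eta_j}}(K_X)\to 0$ rather than from your adjunction/normal-bundle argument. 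On the bielliptic $Y$ the paper computes $h^0(\Omega_Y\otimes\omega_Y(g^*L_2))=2$ via the exact sequence $0\to\mathcal{O}_Y\to\Omega_Y\to\omega_Y\to 0$ coming from the smooth Albanese map of $Y$, which makes precise your ``product--quotient structure'' remark and indeed yields the subtracted term $h^0(\Omega_X\otimes\omega_X(\epsilon+L_2))=h^0(\Omega_X\otimes\omega_X(L_3))=1$ as you anticipated.
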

\begin{proof}
	(1) Consider the exact sequence
	$$0\rightarrow \Omega_X\otimes \omega_X\rightarrow \Omega_X(logD_1, logD_2, logD_3)\otimes\omega_X\rightarrow \mathcal{O}_{D_1}(K_X)\oplus \mathcal{O}_{T_{\eta_i}}(K_X) \oplus \mathcal{O}_{T_{\eta_j}}(K_X)\rightarrow 0.$$
	Since  $K_XD_1=-2$, we have  $h^0( \mathcal{O}_{D_1}(K_X))=0$.
	From the exact sequence
	$$0\rightarrow \mathcal{O}_X(E_0-E_{\eta_i})\rightarrow \mathcal{O}_X(K_X)\rightarrow \mathcal{O}_{T_{\eta_i}}(K_X)\rightarrow 0,$$
	$h^0(\mathcal{O}_X(E_0-E_{\eta_i}))=h^1(\mathcal{O}_X(E_0-E_{\eta_i}))=0$ and $h^0(\mathcal{O}_X(K_X))=0$, we see $h^0(\mathcal{O}_{T_{\eta_i}}(K_X))=0$. Similarly, we have  $h^0(\mathcal{O}_{T_{\eta_j}}(K_X))=0$ and hence
	$h^0(\mathcal{O}_{D_1}(K_X)\oplus \mathcal{O}_{T_{\eta_i}}(K_X) \oplus \mathcal{O}_{T_{\eta_j}}(K_X))=0$.
	Since moreover $h^0(\Omega_X\otimes \omega_X)=0$,  we get   $h^0(\Omega_X(logD_1, logD_2, logD_3)\otimes\omega_X)=0$.
	
	(2) The proof is similar to  Lemma \ref{term 1},  just replace $\eta_i$ by $\tau$.
	
	(3) Consider the smooth double cover  $g: Y\rightarrow X$ with $g_*\mathcal{O}_Y=\mathcal{O}_X\oplus \mathcal{O}_X(-\epsilon)$, where  $\epsilon\equiv 2D_0-E_{\tau}$.  Since $K_Y\equiv g^*(K_X+\epsilon)\equiv g^*(E_0-E_{\tau})$ and $h^0(K_Y)=h^0(K_X)+h^0(K_X+\epsilon)=0$, we have   $K_Y\not\equiv 0$ and  $4K_Y\equiv 0$. Moreover, we have  $q(Y)=h^1(\mathcal{O}_Y)=h^1(\mathcal{O}_X)+h^1(\mathcal{O}_X(-\epsilon))=1$.  Thus  $Y$ is a  bielliptic surface and its  Albanese map  $\alpha_Y: Y\rightarrow B$ is a smooth map.  Hence  $\Omega_{Y/B}$ is a locally free sheaf and  we have the following exact sequence:
	$$0\rightarrow \alpha_Y^*\omega_B \rightarrow \Omega_Y \rightarrow \Omega_{Y/B} \rightarrow 0.$$
	Since $\omega_B\cong \mathcal{O}_B$, we have $\alpha_Y^*\omega_B \cong \mathcal{O}_Y$ and thus  $\Omega_{Y/B}\cong \omega_Y$.  Now  the above exact sequence  becomes
	$$0\rightarrow \mathcal{O}_Y \rightarrow \Omega_Y \rightarrow \omega_Y \rightarrow 0.$$
	Tensoring this exact sequence  with $\omega_Y(g^*D_0)$, we get
	$$0\rightarrow \mathcal{O}_Y(g^*(D_0+E_0-E_{\tau})) \rightarrow \Omega_Y \otimes \omega_Y(g^*D_0) \rightarrow \mathcal{O}_Y(g^*(D_0+2E_0-2E_{\tau})) \rightarrow 0.$$
	Since $h^0(\mathcal{O}_Y(g^*(D_0+E_0-E_{\tau}))=h^0(\mathcal{O}_Y(g^*(D_0+2E_0-2E_{\tau})))=1$ and $h^1(\mathcal{O}_Y(g^*(D_0+E_0-E_{\tau}))=0$, we get  $h^0(\Omega_Y \otimes \omega_Y(g^*D_0))=2$.
	
	On the other hand, by \cite{Cat84} Proposition 3.1, we have  $H^0(\Omega_Y \otimes \omega_Y(g^*D_0))\cong H^0(\Omega_X(logD_2)\otimes \omega_X(D_0)) \oplus H^0(\Omega_{X}\otimes \omega_X(\epsilon+D_0))$.  Since $h^0(\Omega_X\otimes \omega_X(\epsilon+D_0))=h^0(\mathcal{O}_X(D_0+E_0-E_{\tau}))+h^0(\omega_X(D_0+E_0-E_\tau))=1$, we get $h^0(\Omega_X(logD_2)\otimes \omega_X(L_2))=h^0(\Omega_X(logD_2)\otimes \omega_X(D_0))=1$.

	(4) $h^0(\Omega_X(logD_3)\otimes \omega_X(L_3))=h^0(\mathcal{O}_X(D_0+E_0-E_{\tau}))+h^0(\omega_X(D_0+E_0-E_{\tau}))=1$.
\end{proof}

\begin{theorem}
	\label{irreducible 2}
	We have  $h^1(T_S)=4=\dim\mathcal{M}'_2$. Therefore $\overline{\mathcal{M}'_2}$ is an irreducible component of $\mathcal{M}^{4,3}_{1,1}$.
\end{theorem}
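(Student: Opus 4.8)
The plan is to run the same assembly as in the proof of Theorem \ref{irreducible component 1}, now feeding in the four cohomology computations of Lemma \ref{term 0-3} in place of Lemmas \ref{term 0}--\ref{term 3}. First I would invoke the Catanese decomposition (\cite{Cat84} Theorem 2.16), which splits $H^0(\Omega_S\otimes\omega_S)$ as the direct sum of the log-differential term $H^0(\Omega_X(logD_1,logD_2,logD_3)\otimes\omega_X)$ and the three twisted terms $H^0(\Omega_X(logD_i)\otimes\omega_X(L_i))$ over $X=B^{(2)}$. Substituting the values $0,0,1,1$ from parts (1)--(4) of Lemma \ref{term 0-3} gives $h^0(\Omega_S\otimes\omega_S)=2$.

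Next I would pass from this to $h^1(T_S)$ exactly as in the type $I_1$ case. Since $S$ is of general type, $h^0(T_S)=0$, and Riemann--Roch for the tangent sheaf gives $h^0(T_S)-h^1(T_S)+h^2(T_S)=2K_S^2-10\chi(\mathcal{O}_S)=-2$, while Serre duality identifies $h^2(T_S)=h^0(\Omega_S\otimes\omega_S)$. Hence $h^1(T_S)=h^2(T_S)+2=h^0(\Omega_S\otimes\omega_S)+2=4$. Combined with $\dim\mathcal{M}'_2=4$ from Proposition \ref{dim M'2}, this yields the numerical equality $h^1(T_S)=4=\dim\mathcal{M}'_2$.

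The remaining, and conceptually the only nonroutine, step is to deduce that $\overline{\mathcal{M}'_2}$ is an irreducible component. Here I would argue by the standard dimension bound: $h^1(T_S)$ is the dimension of the tangent space to the base of the Kuranishi family at $[S]$, hence an upper bound for the local dimension at $[S]$ of any irreducible subset of $\mathcal{M}^{4,3}_{1,1}$ through $[S]$. Let $Z$ be any irreducible component of $\mathcal{M}^{4,3}_{1,1}$ containing the irreducible set $\overline{\mathcal{M}'_2}$; then $\dim Z\geq\dim\overline{\mathcal{M}'_2}=4$, while the general surface $S$ of type $I_2$ lies in $\overline{\mathcal{M}'_2}\subseteq Z$ and satisfies $\dim_{[S]}Z\leq h^1(T_S)=4$. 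Since $Z$ is irreducible, $\dim Z=\dim_{[S]}Z\leq 4$, forcing $\dim Z=4=\dim\overline{\mathcal{M}'_2}$; as both are irreducible and $\overline{\mathcal{M}'_2}\subseteq Z$, they coincide, so $\overline{\mathcal{M}'_2}=Z$ is an irreducible component.

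The main obstacle is essentially discharged already: the delicate part of the whole argument is the four log-differential computations packaged in Lemma \ref{term 0-3} (in particular part (3), which exploits that the auxiliary double cover $Y$ is a bielliptic surface), and once those are in hand the present theorem is short bookkeeping plus the deformation bound above. The one point requiring care is that $h^1(T_S)$ is computed at a \emph{general} type $I_2$ surface, which is a special (non-generic) point of $\overline{\mathcal{M}'_2}$; this is harmless, because the inequality $\dim_{[S]}Z\leq h^1(T_S)$ holds precisely at that point and an irreducible variety has the same dimension at every one of its points.
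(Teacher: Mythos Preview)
Your proposal is correct and follows essentially the same route as the paper: invoke the Catanese decomposition, plug in the four values from Lemma \ref{term 0-3} to get $h^0(\Omega_S\otimes\omega_S)=2$, then use Riemann--Roch plus Serre duality and Proposition \ref{dim M'2} to conclude $h^1(T_S)=4=\dim\mathcal{M}'_2$. Your final paragraph spelling out the Kuranishi upper-bound argument is more explicit than the paper's one-word ``Hence,'' but it is exactly the standard reasoning the paper is relying on.
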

\begin{proof} By  Lemma \ref{term 0-3}, we have  $h^0(\Omega_S\otimes \omega_S)=h^0(\Omega_X(logD_1, logD_2, logD_3)\otimes\omega_X)+\sum_{i=1}^3 h^0(\Omega_X(logD_i)\otimes \omega_X(L_i))=2$. By Riemann-Roch, Serre duality and Proposition \ref{dim M'2}, we have   $h^1(T_S)=h^2(T_S)+2=h^0(\Omega_S\otimes \omega_S)+2=4=\dim \mathcal{M}'_2$. Hence $\overline{\mathcal{M}'_2}$ is an irreducible component of $\mathcal{M}^{4,3}_{1,1}$.
\end{proof}

\vspace{3ex}
$\mathbf{Acknowledgements.}$
The author was sponsored by China Scholarship Council ``High-level university graduate program''(No.201506010011).

The author would like to thank his advisor, Professor Fabrizio Catanese    at Universit\"at Bayreuth for suggesting this research topic,  for a lot of  inspiring discussion with the author  and for his encouragement  to the author.  The author would also like to thank his domestic advisor, Professor Jinxing Cai  at Peking University for his encouragement and some useful suggestions.
The author is grateful to Binru Li  for a lot of helpful discussion. Thanks also goes to Stephen Coughlan and Andreas Demleitner   for improving the English.


\vspace{3ex}
School of Mathematical Sciences, Peking  University, Yiheyuan Road 5, Haidian District, Beijing 100871,  People's Republic of China

E-mail address: 1201110022@pku.edu.cn

\vspace{2ex}
Lehrstuhl Mathematik VIII, Universit\"at Bayreuth,  NW II,  Universit\"atsstr. 30, 95447 Bayreuth, Germany

E-mail address: songbo.ling@uni-bayreuth.de

\end{document}